\definecolor{Bordeaux}{rgb}{0.545, 0.137, 0.137}
\definecolor{BleuGris}{rgb}{0.212, 0.392, 0.545}
\definecolor{Chocolat}{rgb}{0.36, 0.2, 0.09}
\definecolor{BleuTresFonce}{rgb}{0.215, 0.215, 0.36}
\DeclareSymbolFont{rsfscript}{OMS}{rsfs}{m}{n}
\DeclareSymbolFontAlphabet{\mathrsfs}{rsfscript}
\DeclareFontFamily{OMS}{rsfs}{\skewchar\font'177}
\DeclareFontShape{OMS}{rsfs}{m}{n}{%
      <5> rsfs5
      <6> <7> rsfs7
      <8> <9> <10> rsfs10
      <10.95> <12> <14.4> <17.28> <20.74> <24.88> rsfs10
      }{}
\def\calA{\mathrsfs{A}}
\def\calF{\mathrsfs{F}}
\def\calG{\mathrsfs{G}}
\def\calH{\mathrsfs{H}}
\def\calM{\mathrsfs{M}}
\def\calO{\mathrsfs{O}}
\def\calP{\mathrsfs{P}}
\def\calQ{\mathrsfs{Q}}
\def\calR{\mathrsfs{R}}
\def\calV{\mathrsfs{V}}
\DeclareMathAlphabet{\mathbbold}{U}{bbold}{m}{n}
\def\k{\mathbbold{k}}
\newcommand{\LYY}[4][{}]{\ensuremath{
 \xygraph{
!{<0pt,0pt>;<4pt,0pt>:<0pt,-4pt>::}
!{(1,4)}*{\scriptscriptstyle #1}
!{(1,3)}="a"
!{(1,1)}*{\scriptscriptstyle{\circ}}="b"
!{(-1,-1)}*{\scriptscriptstyle{\circ}}="c"
!{(3,-1)}="d"
!{(3,-2)}*{\scriptscriptstyle #4}
!{(-3,-3)}="e"
!{(-3,-4)}*{\scriptscriptstyle #2}
!{(1,-3)}="f"
!{(1,-4)}*{\scriptscriptstyle #3}
"b"-"c"
"b"-"d"
"c"-"e"
"c"-"f"
}
}}
\newcommand{\LYYb}[4][{}]{\ensuremath{
 \xygraph{
!{<0pt,0pt>;<4pt,0pt>:<0pt,-4pt>::}
!{(1,4)}*{\scriptscriptstyle #1}
!{(1,3)}="a"
!{(1,1)}*{\scriptscriptstyle{\circ}}="b"
!{(-1,-1)}*{\scriptscriptstyle{\circ}}="c"
!{(3,-1)}="d"
!{(3,-2)}*{\scriptscriptstyle #4}
!{(-3,-3)}="e"
!{(-3,-4)}*{\scriptscriptstyle #2}
!{(1,-3)}="f"
!{(1,-4)}*{\scriptscriptstyle #3}
"b"-@[|(3)]"c"
"b"-@[|(3)]"d"
"c"-@[|(3)]"e"
"c"-@[|(3)]"f"
}
}}
\newcommand{\RYY}[4][{}]{\ensuremath{
 \xygraph{
!{<0pt,0pt>;<4pt,0pt>:<0pt,-4pt>::}
!{(5,-4)}*{\scriptscriptstyle #3}
!{(1,3)}="a"
!{(1,1)}*{\scriptscriptstyle{\circ}}="b"
!{(1,-4)}*{\scriptscriptstyle #2}
!{(-1,-1)}="c"
!{(3,-1)}*{\scriptscriptstyle{\circ}}="d"
!{(5,-3)}="e"
!{(-1,-2)}*{\scriptscriptstyle #4}
!{(1,-3)}="f"
!{(-1,-4)}*{\scriptscriptstyle #1}
"b"-"c"
"b"-"d"
"d"-"e"
"d"-"f"
}
}}
\newcommand{\LYRYAB}[5][{}]{\ensuremath{
 \xygraph{
!{<0pt,0pt>;<5pt,0pt>:<0pt,-5pt>::}
!{(1,4)}*{\scriptscriptstyle #1}
!{(1,3)}="a"
!{(1,1)}*{\scriptscriptstyle{\circ}}="b"
!{(-1,-1)}*{\scriptscriptstyle{\bullet}}="c"
!{(3,-1)}*{\scriptscriptstyle{\circ}}="d"
!{(-2,-4)}*{\scriptscriptstyle #2}
!{(-2,-3)}="e"
!{(0,-4)}*{\scriptscriptstyle #3}
!{(0,-3)}="f"
!{(2,-3)}="g"
!{(4,-3)}="h"
!{(2,-4)}*{\scriptscriptstyle #4}
!{(4,-4)}*{\scriptscriptstyle #5}
"b"-"c"
"b"-"d"
"c"-"e"
"c"-"f"
"d"-"g"
"d"-"h"
}
}}
\newcommand{\XYex}[0]{\ensuremath{
\xygraph{!{<0pt,0pt>;<7pt,0pt>:<0pt,-7pt>::}
!{(-1,-1)}*{\scriptscriptstyle{\circ}}="c"
!{(-3,-3)}*{\scriptscriptstyle{\bullet}}="e"
!{(1,-3)}*{\scriptscriptstyle{\circ}}="f"
!{(-4.4,-5)}*{\scriptscriptstyle{\circ}}="g"
!{(-1.9,-5)}="h"
!{(-0.4,-5)}*{\scriptscriptstyle{\bullet}}="i"
!{(2.4,-5)}*{\scriptscriptstyle{\circ}}="j"
!{(-5.4,-7)}="k"
!{(-3.4,-7)}="l"
!{(-1.4,-7)}="m"
!{(0.4,-7)}="n"
!{(1.4,-7)}="o"
!{(3.4,-7)}="p"
!{(-5.4,-8)}*{\scriptscriptstyle 1}
!{(-3.4,-8)}*{\scriptscriptstyle 3}
!{(-1.4,-8)}*{\scriptscriptstyle 2}
!{(0.4,-8)}*{\scriptscriptstyle 7}
!{(1.4,-8)}*{\scriptscriptstyle 4}
!{(3.4,-8)}*{\scriptscriptstyle 6}
!{(-1.9,-6)}*{\scriptscriptstyle 5}
"c"-"e"
"c"-"f"
"e"-"g"
"e"-"h"
"f"-"i"
"f"-"j"
"g"-"k"
"g"-"l"
"i"-"m"
"i"-"n"
"j"-"o"
"j"-"p"
} 
}}
\newcommand{\XYexa}[0]{\ensuremath{
\xygraph{!{<0pt,0pt>;<7pt,0pt>:<0pt,-7pt>::}
!{(-1,-1)}*{\scriptscriptstyle{\circ}}="c"
!{(-3,-3)}*{\scriptscriptstyle{\bullet}}="e"
!{(1,-3)}*{\scriptscriptstyle{\circ}}="f"
!{(-4.4,-5)}*{\scriptscriptstyle{\circ}}="g"
!{(-1.9,-5)}="h"
!{(-0.4,-5)}*{\scriptscriptstyle{\bullet}}="i"
!{(2.4,-5)}*{\scriptscriptstyle{\circ}}="j"
!{(-5.4,-7)}="k"
!{(-3.4,-7)}="l"
!{(-1.4,-7)}="m"
!{(0.4,-7)}="n"
!{(1.4,-7)}="o"
!{(3.4,-7)}="p"
!{(-5.4,-8)}*{\scriptscriptstyle 1}
!{(-3.4,-8)}*{\scriptscriptstyle 3}
!{(-1.4,-8)}*{\scriptscriptstyle 2}
!{(0.4,-8)}*{\scriptscriptstyle 7}
!{(1.4,-8)}*{\scriptscriptstyle 4}
!{(3.4,-8)}*{\scriptscriptstyle 6}
!{(-1.9,-6)}*{\scriptscriptstyle 5}
"c"-@[|(3)]"e"
"c"-@[|(3)]"f"
"e"-@[|(3)]"g"
"e"-@[|(3)]"h"
"f"-@[|(3)]"i"
"f"-@[|(3)]"j"
"g"-"k"
"g"-"l"
"i"-"m"
"i"-"n"
"j"-"o"
"j"-"p"
}
}}
\newcommand{\XYexb}[0]{\ensuremath{
\xygraph{!{<0pt,0pt>;<7pt,0pt>:<0pt,-7pt>::}
!{(-1,-1)}*{\scriptscriptstyle{\circ}}="c"
!{(-3,-3)}*{\scriptscriptstyle{\bullet}}="e"
!{(1,-3)}*{\scriptscriptstyle{\circ}}="f"
!{(-4.4,-5)}*{\scriptscriptstyle{\circ}}="g"
!{(-1.9,-5)}="h"
!{(-0.4,-5)}*{\scriptscriptstyle{\bullet}}="i"
!{(2.4,-5)}*{\scriptscriptstyle{\circ}}="j"
!{(-5.4,-7)}="k"
!{(-3.4,-7)}="l"
!{(-1.4,-7)}="m"
!{(0.4,-7)}="n"
!{(1.4,-7)}="o"
!{(3.4,-7)}="p"
!{(-5.4,-8)}*{\scriptscriptstyle 1}
!{(-3.4,-8)}*{\scriptscriptstyle 3}
!{(-1.4,-8)}*{\scriptscriptstyle 2}
!{(0.4,-8)}*{\scriptscriptstyle 7}
!{(1.4,-8)}*{\scriptscriptstyle 4}
!{(3.4,-8)}*{\scriptscriptstyle 6}
!{(-1.9,-6)}*{\scriptscriptstyle 5}
"f"-@[|(3)]"i"
"f"-@[|(3)]"j"
"i"-@[|(3)]"m"
"i"-@[|(3)]"n"
"j"-@[|(3)]"o"
"j"-@[|(3)]"p"
"c"-"e"
"c"-"f"
"e"-"g"
"e"-"h"
"g"-"k"
"g"-"l"
}
}}
\newcommand{\XYexcc}[0]{\ensuremath{
\xygraph{!{<0pt,0pt>;<7pt,0pt>:<0pt,-7pt>::}
!{(-1,-1)}*{\scriptscriptstyle{\circ}}="c"
!{(-3,-3)}="e"
!{(1,-3)}*{\scriptscriptstyle{\circ}}="f"
!{(-0.4,-5)}*{\scriptscriptstyle{\circ}}="i"
!{(2.4,-5)}*{\scriptscriptstyle{\circ}}="j"
!{(-1.4,-7)}="m"
!{(0.4,-7)}="n"
!{(1.4,-7)}="o"
!{(3.4,-7)}="p"
!{(-3,-4)}*{\scriptscriptstyle 1}
!{(-1.4,-8)}*{\scriptscriptstyle 2}
!{(0.4,-8)}*{\scriptscriptstyle 5}
!{(1.4,-8)}*{\scriptscriptstyle 3}
!{(3.4,-8)}*{\scriptscriptstyle 4}
"f"-@[|(3)]"i"
"f"-@[|(3)]"j"
"i"-@[|(3)]"m"
"i"-@[|(3)]"n"
"j"-"o"
"j"-"p"
"c"-"e"
"c"-"f"
}
}}
\newcommand{\XYexcd}[0]{\ensuremath{
\xygraph{!{<0pt,0pt>;<7pt,0pt>:<0pt,-7pt>::}
!{(-1,-1)}*{\scriptscriptstyle{\circ}}="c"
!{(-3,-3)}*{\scriptscriptstyle{\circ}}="e"
!{(1,-3)}*{\scriptscriptstyle{\circ}}="f"
!{(-4.4,-5)}*{\scriptscriptstyle{\circ}}="g"
!{(-1.9,-5)}="h"
!{(-0.4,-5)}*{\scriptscriptstyle{\circ}}="i"
!{(2.4,-5)}*{\scriptscriptstyle{\circ}}="j"
!{(-5.4,-7)}="k"
!{(-3.4,-7)}="l"
!{(-1.4,-7)}="m"
!{(0.4,-7)}="n"
!{(1.4,-7)}="o"
!{(3.4,-7)}="p"
!{(-5.4,-8)}*{\scriptscriptstyle 1}
!{(-3.4,-8)}*{\scriptscriptstyle 5}
!{(-1.4,-8)}*{\scriptscriptstyle 2}
!{(0.4,-8)}*{\scriptscriptstyle 7}
!{(1.4,-8)}*{\scriptscriptstyle 4}
!{(3.4,-8)}*{\scriptscriptstyle 6}
!{(-1.9,-6)}*{\scriptscriptstyle 3}
"f"-@[|(3)]"i"
"f"-@[|(3)]"j"
"i"-@[|(3)]"m"
"i"-@[|(3)]"n"
"j"-"o"
"j"-"p"
"c"-"e"
"c"-"f"
"e"-@[|(3)]"g"
"e"-@[|(3)]"h"
"g"-@[|(3)]"k"
"g"-@[|(3)]"l"
}
}}
\DeclareMathOperator{\id}{id}
\DeclareMathOperator{\lt}{lt}
\DeclareMathOperator{\AntiCom}{AntiCom}
\DeclareMathOperator{\End}{End}
\DeclareMathOperator{\Hycom}{Hycom}
\DeclareMathOperator{\Grav}{Grav}
\DeclareMathOperator{\Ord}{Ord}
\DeclareMathOperator{\Vect}{Vect}
\theoremstyle{plain}
\newtheorem{theorem}{Theorem}[section]
\newtheorem{lemma}[theorem]{Lemma}
\newtheorem{corollary}[theorem]{Corollary}
\newtheorem{proposition}[theorem]{Proposition}
\theoremstyle{definition}
\newtheorem{definition}[theorem]{Definition}
\theoremstyle{remark}
\newtheorem{remark}[theorem]{Remark}
\newtheorem{example}[theorem]{Example}
\def\YEAR{\year}\newcount\VOL\VOL=\YEAR\advance\VOL by-1995
\def\firstpage{1}\def\lastpage{1000}
\def\received{}\def\revised{}
\def\communicated{}
\def\magnification{\afterassignment\m@g\count@}
\def\m@g{\mag=\count@\hsize6.5truein\vsize8.9truein\dimen\footins8truein}
\font\eightrm=cmr8
\font\caps=cmcsc10                    
\font\Caps=cmcsc10 scaled \magstep1   
\def\DocMath{}
\renewcommand{\@evenhead}{%
    \ifnum\thepage>\lastpage\rlap{\thepage}\hfill%
    \else\rlap{\thepage}\slshape\leftmark\hfill{\caps\SAuthor}\hfill\fi}%
\renewcommand{\@oddhead}{%
    \ifnum\thepage=\firstpage{\DocMath\hfill\llap{\thepage}}%
    \else{\slshape\rightmark}\hfill{\caps\STitle}\hfill\llap{\thepage}\fi}%
\def\TSkip{\bigskip}
\newbox\TheTitle{\obeylines\gdef\GetTitle #1
\ShortTitle  #2
\SubTitle    #3
\Author      #4
\ShortAuthor #5
\EndTitle
{\setbox\TheTitle=\vbox{\baselineskip=20pt\let\par=\cr\obeylines%
\halign{\centerline{\Caps##}\cr\noalign{\medskip}\cr#1\cr}}%
	\copy\TheTitle\TSkip\TSkip%
\def\next{#2}\ifx\next\empty\gdef\STitle{#1}\else\gdef\STitle{#2}\fi%
\def\next{#3}\ifx\next\empty%
    \else\setbox\TheTitle=\vbox{\baselineskip=20pt\let\par=\cr\obeylines%
    \halign{\centerline{\caps##} #3\cr}}\copy\TheTitle\TSkip\TSkip\fi%
\centerline{\caps #4}\TSkip\TSkip%
\def\next{#5}\ifx\next\empty\gdef\SAuthor{#4}\else\gdef\SAuthor{#5}\fi%
\ifx\received\empty\relax
    \else\centerline{\eightrm Received: \received}\fi%
\ifx\revised\empty\TSkip%
    \else\centerline{\eightrm Revised: \revised}\TSkip\fi%
\ifx\communicated\empty\relax
    \else\centerline{\eightrm Communicated by \communicated}\fi\TSkip\TSkip%
\catcode'015=5}}\def\Title{\obeylines\GetTitle}
\def\Abstract{\begingroup\narrower
    \parskip=\medskipamount\parindent=0pt{\caps Abstract. }}
\def\EndAbstract{\par\endgroup\TSkip}
\long\def\MSC#1\EndMSC{\def\arg{#1}\ifx\arg\empty\relax\else
     {\par\narrower\noindent%
     2010 Mathematics Subject Classification: #1\par}\fi}
\long\def\KEY#1\EndKEY{\def\arg{#1}\ifx\arg\empty\relax\else
	{\par\narrower\noindent Keywords and Phrases: #1\par}\fi\TSkip}
\newbox\TheAdd\def\Addresses{\vfill\copy\TheAdd\vfill
    \ifodd\number\lastpage\vfill\eject\phantom{.}\vfill\eject\fi}
{\obeylines\gdef\GetAddress #1
\Address #2 
\Address #3
\Address #4
\EndAddress
{\def\xs{4.3truecm}\parindent=0pt
\setbox0=\vtop{{\obeylines\hsize=\xs#1\par}}\def\next{#2}
\ifx\next\empty 
     \setbox\TheAdd=\hbox to\hsize{\hfill\copy0\hfill}
\else\setbox1=\vtop{{\obeylines\hsize=\xs#2\par}}\def\next{#3}
\ifx\next\empty 
     \setbox\TheAdd=\hbox to\hsize{\hfill\copy0\hfill\copy1\hfill}
\else\setbox2=\vtop{{\obeylines\hsize=\xs#3\par}}\def\next{#4}
\ifx\next\empty\ 
     \setbox\TheAdd=\vtop{\hbox to\hsize{\hfill\copy0\hfill\copy1\hfill}
                \vskip20pt\hbox to\hsize{\hfill\copy2\hfill}}
\else\setbox3=\vtop{{\obeylines\hsize=\xs#4\par}}
     \setbox\TheAdd=\vtop{\hbox to\hsize{\hfill\copy0\hfill\copy1\hfill}
	        \vskip20pt\hbox to\hsize{\hfill\copy2\hfill\copy3\hfill}}
\fi\fi\fi\catcode'015=5}}\gdef\Address{\obeylines\GetAddress}
\begin{document}
\Title
Quillen homology for operads via Gr\"obner bases
\ShortTitle 
\SubTitle   
\Author 
Vladimir Dotsenko and Anton Khoroshkin\footnote{The first author's research was supported by Grant GeoAlgPhys 2011-2013 awarded by the University of Luxembourg and by an IRCSET research fellowship. The second author's research was supported by grants NSh3349.2012.2, RFBR-10-01-00836, RFBR-CNRS-10-01-93111, RFBR-CNRS-10-01-93113, and a grant Ministry of Education and Science of the Russian Federation under the contract 14.740.11.081.}
\ShortAuthor 
Vladimir Dotsenko and Anton Khoroshkin
\EndTitle
\Abstract 
The main goal of this paper is to present a way to compute Quillen homology of operads. The key idea is to use the notion of a shuffle operad we introduced earlier; this allows to compute, for a symmetric operad, the homology classes and the shape of the differential in its minimal model, although does not give an insight on the symmetric groups action on the homology. Our approach goes in several steps. First, we regard our symmetric operad as a shuffle operad, which allows to compute its Gr\"obner basis. Next, we define a combinatorial resolution for the ``monomial replacement'' of each shuffle operad (provided by the Gr\"obner bases theory). Finally, we explain how to ``deform'' the differential to handle every operad with a Gr\"obner basis, and find explicit representatives of Quillen homology classes for a large class of operads. We also present various applications, including a new proof of Hoffbeck's PBW criterion, a proof of Koszulness for a class of operads coming from commutative algebras, and a 
homology computation for the operads of Batalin--Vilkovisky algebras and of Rota--Baxter algebras.

MSC2010: 18G10 (Primary); 13P10, 16E05, 18D50, 18G55 (Secondary)
\EndAbstract
\MSC 
\EndMSC
\KEY 
\EndKEY
\Address 
School of Mathematics, Trinity College, Dublin 2, Ireland
\Address
Simons Center for Geometry and Physics, Stony Brook University, Stony Brook, NY 11794-3636, USA 
and 
ITEP, Bolshaya Cheremushkinskaya 25, 117259, Moscow, Russia
\Address
\Address
\EndAddress

\section*{Introduction}

\subsection*{Context and the main goal of the paper}
Quillen's philosophy of homotopical algebra \cite{QuillenHomotopical,QuillenComAlg} suggests to study invariants of associative algebras (and their variants in various monoidal categories) within the homotopical category obtained from the usual category of (differential graded) algebras via the localisation inverting all quasi-isomorphisms. One of the key invariants of that sort is what is now called Quillen homology, the left derived functor of the functor of indecomposables $A\mapsto A^{ab}:=A/A^2$ (this functor is also somewhat informally called ``abelianisation''). Since we work in the homotopical category, studying an algebra is the same as studying its cofibrant replacement, which in many known model categories (associative algebras, operads etc.) is given by a (quasi-)free resolution. One such resolution, given by the cobar-bar construction, is readily available, however, sometimes it is preferable (and possible) to have a smaller resolution. The so called minimal resolution (if exists) has Quillen 
homology of the algebra as its space of generators. In general, given an algebra~$A$ and a free resolution~$(\calF_\bullet,d)$ of~$A$, Quillen homology $H^Q(A)$ is isomorphic to the homology of the differential induced on the space of indecomposable elements of $\calF_\bullet$, i.e., on the space of generators of our resolution. As a vector space, it can also be identified with the homology of the differential induced on the space of generators of a resolution of the trivial~$A$-module by free right $A$-modules, that is the appropriate $\mathrm{Tor}$ groups, also called syzygies of the given algebra. One general way of computing the space of syzygies is a step-by-step procedure which is usually referred to as the Koszul--Tate method~\cite{Koszul50,Tate57}. However, in some cases it is possible to visualise the whole space of syzygies ``in one go'', like the Koszul duality theory~\cite{GK,Priddy} suggests. A question raised by Jean-Louis Loday in~\cite[Question 7]{Loday2012} is to compare the computations via 
the Koszul duality theory (when available) with those by the Koszul--Tate approach. In this paper, we give a method that brings those two approaches together, applying the machinery of Gr\"obner bases and thus understanding the intrinsic structure of relations between relations in the spirit of Koszul duality.

One of most important practical results provided (in many different frameworks) by Gr\"obner bases is that when dealing with various linear algebra information (bases, dimensions etc.) one can replace an algebra with complicated relations by an algebra with monomial relations without losing any information of that sort. When it comes to questions of homological algebra, things become more subtle, since homology may ``jump up'' for a monomial replacement of an algebra. However, the idea of applying Gr\"obner bases to problems of homological algebra is far from hopeless. It turns out that for monoids with monomial relations it is often possible to construct very neat resolutions that can be used for various computations; furthermore, the data computed by these resolutions can be used to obtain results in the general (not necessarily monomial) case. The main goal of this paper is to explain this approach in detail for computations of Quillen homology for operads.

\subsection*{Proposed methods}

Operads that usually arise naturally in various topics are \emph{symmetric} operads; they encode intrinsic properties of operations with several arguments acting on certain algebraic or geometric objects, and as such are equipped with the action of symmetric groups. The presence of such an action instantly implies that there is no meaningful notion of a monomial operad: any notion of that sort is not rich enough for every operad to have a monomial replacement. A way to deal with this problem proposed in~\cite{DK} is to include symmetric operads in a larger universe of \emph{shuffle} operads, where every object has a monomial replacement. On shuffle operads, no symmetries are allowed to act directly: the only way symmetries enter the game is through the formulae for compositions of operations. In that category, it is possible to make the Gr\"obner bases machinery work, and hence there is hope that it can be applied to questions of homological algebra. It is indeed possible, along the following 
lines. We begin with a resolution which generally not minimal even in the monomial case, but has the advantage of being purely combinatorial and not using much information about the underlying monoidal category. It is based on the inclusion--exclusion principle, and is in a sense a version of the cluster method of enumerative combinatorics due to Goulden and Jackson~\cite{GJ}. The resolution obtained is not always minimal, and we also discuss how to use it to compute the Quillen homology as a vector space, using algebraic Morse theory. This is followed, by an explanation of how to ``deform'' the differential of our resolution to incorporate lower terms of relations and handle arbitrary algebras with known Gr\"obner bases.  Note that the Quillen homology of a symmetric operad is, as a homotopy shuffle co-operad, isomorphic to the Quillen homology of that operad considered in the shuffle category. Since our results allow to compute the homotopy co-operad structure on the Quillen homology via the homotopy 
transfer theorem for homotopy co-operads~\cite{DCV}, in principle we recover most of the information on Quillen homology in the symmetric category (and of course the shape the differential in the minimal model). The reader will see that in some of the examples we discuss.	  

As we mentioned earlier, our approach generalises the Koszul duality theory for defining relations of arbitrary degrees. In the case of a Koszul operad, one is able to write down a formula for its minimal resolution right away: such a resolution has the Koszul dual co-operad as its space of generators. Most known examples of Koszul algebras and operads actually satisfy the PBW condition~\cite{Hoffbeck,Priddy}, or equivalently have a quadratic Gr\"obner basis~\cite{DK,PP}. In that case, the minimal resolution provided by the Koszul duality theory coincides with that obtained by our methods. Generally, Gr\"obner bases allow to choose a ``good'' system of relations that captures the structure of relations between relations (higher syzygies). 

\subsection*{Related results}
In the case of usual associative algebras and right modules, the approach we discuss has been known since the celebrated paper of Anick~\cite{Anick} where for a monomial algebra a minimal right module resolution of the trivial module was computed, and an explicit way to deform the differential was presented to handle the general case. Later, Anick's resolution was generalised to the case of categories by Malbos~\cite{Malbos} who also asked whether this work could be extended to the case of operads.  Results of this paper give such an extension, and suggest a way to handle associative algebras presented via generators and relations in many different monoidal categories (e.g. commutative associative algebras, associative dialgebras, (shuffle) coloured operads, dioperads, $\frac12$PROPs) in a uniform way. If, in addition, we assume that our algebras are linear spans of algebras enriched in sets, our constructions are closely related to those of free polygraphic resolutions for $(\infty,1)$-categories obtained 
by methods of 
rewriting theory~\cite{Met1}; this relationship is currently investigated by the first author in a joint work with Yves Guiraud and Philippe Malbos, and will be discussed elsewhere.

\subsection*{Overview of applications}
There are various applications of our approach; some of them are presented in this paper. Two interesting theoretical applications are a new short proof of Hoffbeck's PBW criterion for operads \cite{Hoffbeck}, and an upper bound on the homology for operads obtained from commutative algebras; in particular, we prove that an operad obtained from a Koszul commutative algebra is Koszul. Some interesting concrete examples where all steps of our construction can be completed are the case of the operad $RB$ of Rota--Baxter algebras, its noncommutative analogue~$ncRB$, and, the last but not the least, the operad~$BV$ of Batalin--Vilkovisky algebras. Using our methods, we were able to compute Quillen homology of the operad~$BV$ and relate it to the gravity operad of Getzler~\cite{Getzler1}. After completing the first version of this paper in 2009~\cite{DKres2009}, we learned that these (and other) results concerning the operad~$BV$ were also obtained independently by Drummond-Cole and Vallette~\cite{DCV}. Our methods 
appear to be completely different; we also believe that our approach to the operad~$BV$ is of independent interest as an illustration of a rather general method to compute Quillen homology. It also shows how to use information coming ``from the symmetric world'' to partly understand the shape of trees that appear in the formula for the differential of the minimal model. This idea (to move to the universe of shuffle operads, compute the vector space structure there, and then to use known information on our operad to obtain results about the symmetries of homology) also belongs to the core of the shuffle operad approach. 

\subsection*{Plan of the paper}
This paper is organised as follows. 

In Section~\ref{sec:operads}, we recall necessary background information on shuffle operads, and provide references for definitions and results that are relevant for the paper but are not discussed in detail. 

In Section~\ref{oper_monom}, we present an ``inclusion--exclusion'' resolution for an arbitrary shuffle operad with monomial relations, which we then use in the subsequent sections. 

In Section \ref{homology_classes}, we use algebraic Morse theory to construct representatives for Quillen homology classes under a minor assumption on the combinatorics of defining relations. This section includes brief recollections of algebraic Morse theory, as well as full proofs of existence of Morse matchings; whereas results of that section are important for some of our examples, a reader primarily interested in applications may skip the proofs without any disadvantages for understanding the rest of the paper.

In Section~\ref{resol_general}, we use a version of homological perturbation to obtain a resolution for a general shuffle operad with a Gr\"obner basis. 

In Section~\ref{appl}, we exhibit applications of our results outlined above. Those are a new proof of the PBW criterion, homology estimates for operads coming from commutative algebras, and a computation of Quillen homology for the operads~$RB$, $ncRB$, and~$BV$.

\subsection*{Acknowledgments} Work on this paper ended up being spread over several years, and we benefited a lot from discussing preliminary versions of this paper with our colleagues. We thank Fr\'ed\'eric Chapoton, Yves Guiraud, Eric Hoffbeck, Muriel Livernet, Jean--Louis Loday, Philippe Malbos and Dmitri Piontkovski for useful discussions, remarks and references to literature. Special thanks are due to Li Guo for drawing our attention to Rota--Baxter algebras as an example to which our methods could be applied, and especially to Bruno Vallette for many stimulating discussions and in particular for explaining the approach to the operad~$BV$ used in his joint work with Gabriel Drummond-Cole. The first author gratefully acknowledges the hospitality of the University Lyon 1 during the final stage of working on this paper. The second author wishes to thank Dublin Institute for Advanced Studies and the University of Luxembourg, where parts of this paper were completed, for invitations to visit and excellent 
working conditions. 

\section{Recollections}\label{sec:operads}

All vector spaces and (co)chain complexes throughout this work are defined over an arbitrary field~$\k$. To handle suspensions, we introduce a formal symbol~$s$ of degree~$1$, and define, for a graded vector space~$V$, its suspension $sV$ as $\k s\otimes V$. All algebras and operads are assumed non-unital; to adapt our results for unital algebras, one has to restrict the setup to augmented algebras, and consider the abelianisation $A^{ab}=A_+/(A_+)^2$, where $A_+$ is the augmentation ideal.

The main thing about shuffle operads that is crucial for our constructions is the relevant combinatorics of trees. Hence, in this section we pay most attention in explaining that combinatorics in detail. We also discuss the precise relationship between homological/homotopical results obtained in the symmetric and in the shuffle category, since most of the applications we have in mind concern symmetric operads. For information on operads in general, we refer the reader to the book~\cite{LV}, for information on shuffle operads and Gr\"obner bases for operads in not necessarily quadratic case --- to our paper~\cite{DK}. Throughout this paper by an operad, unless otherwise specified, we mean a shuffle operad: there is no machinery of Gr\"obner bases available in the symmetric case, so we have to sacrifice the symmetric groups action. Of course, in some cases we deal with non-symmetric operads, and in that case there is nothing to sacrifice, and in fact the story is somewhat richer since one can include constants 
(operations of arity~$0$) in the picture and avail of Gr\"obner bases at no additional cost. For details on that, see~\cite{DV}. For the sake of brevity, in this paper we shall discuss shuffle operads in 
detail, while non-symmetric operads will only appear in some examples.

\subsection{Shuffle operads, bar construction and homology}

Let us denote by~$\Ord$ the category whose objects are non-empty finite ordered sets (with order-preserving bijections as morphisms). Also, we denote by $\Vect$ the category of vector spaces (with linear operators as morphisms). It is usually enough to assume vector spaces to be finite-dimensional, though sometimes more generality is needed, and one assumes, for instance, that they are graded with finite-dimensional homogeneous components. 

\begin{definition}
A \emph{(non-symmetric) collection} is a contravariant functor from the category~$\Ord$ to the category~$\Vect$. Because of functoriality, a nonsymmetric collection $\calP$ is completely determined by its \emph{components} $\calP(n):=\calP(\{1,\ldots,n\})$, $n\ge 1$.

For two nonsymmetric collections $\calP$ and $\calQ$, the \emph{shuffle composition product} of $\calP$ and $\calQ$ is the non-symmetric collection $\calP\circ_{sh}\calQ$ defined by the formula
\[
(\calP\circ_{sh}\calQ)(I):=\bigoplus_{k\ge 1}\calP(k)\otimes\left(\bigoplus_{\phi\colon I\twoheadrightarrow\{1,\ldots,k\}}\calQ(\phi^{-1}(1))\otimes\cdots\otimes\calQ(\phi^{-1}(k))\right), 
\]
where the sum is taken over all \emph{shuffling surjections}~$f$, that is surjections for which~$\min\phi^{-1}(i)<\min\phi^{-1}(j)$ whenever~$i<j$. 
\end{definition}

\begin{proposition}[\cite{DK}]
The shuffle composition product equips the category of non-symmetric collections with a structure of a monoidal category.
\end{proposition}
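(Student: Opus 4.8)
The plan is to verify directly the defining data of a monoidal category: produce a unit object, construct a natural associativity isomorphism, and check the triangle and pentagon coherences, with the combinatorics of shuffling surjections doing all the work. First I would take as unit the collection $\mathbf 1$ with $\mathbf 1(1)=\k$ and $\mathbf 1(n)=0$ for $n\ge 2$. Since the only shuffling surjection $I\twoheadrightarrow\{1\}$ is the constant map, one gets a canonical identification $(\mathbf 1\circ_{sh}\calQ)(I)\cong\calQ(I)$; and since a shuffling surjection $\phi\colon I\twoheadrightarrow\{1,\dots,k\}$ all of whose fibres are singletons must be the unique order isomorphism, one gets a canonical identification $(\calP\circ_{sh}\mathbf 1)(I)\cong\calP(I)$. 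These are the left and right unit constraints, and naturality in $\calP$ and $\calQ$ is immediate. I would also note at this point that for a fixed $I$ there are shuffling surjections $I\twoheadrightarrow\{1,\dots,k\}$ only for $k\le\#I$, so every direct sum occurring is finite, and bifunctoriality of $\circ_{sh}$ (checked componentwise) raises no issue.

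The heart of the matter is the associativity isomorphism $(\calP\circ_{sh}\calQ)\circ_{sh}\calR\xrightarrow{\ \sim\ }\calP\circ_{sh}(\calQ\circ_{sh}\calR)$. Unfolding the definitions, a summand of the left-hand side evaluated on $I$ is indexed by a shuffling surjection $\sigma\colon I\twoheadrightarrow[m]$ together with a shuffling surjection $\phi\colon[m]\twoheadrightarrow[k]$, decorated by $\calP(k)$, by the $\calQ(\phi^{-1}(i))$, and by the $\calR(\sigma^{-1}(j))$; a summand of the right-hand side is indexed by a shuffling surjection $\phi'\colon I\twoheadrightarrow[k]$ and, for each $i$, a shuffling surjection $\psi_i\colon(\phi')^{-1}(i)\twoheadrightarrow[l_i]$, decorated by $\calP(k)$, by the $\calQ(l_i)$, and by the $\calR(\psi_i^{-1}(b))$. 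I would set up the bijection between these indexing sets as follows: given the right-hand data, let $m=\sum_i l_i$, regard $[m]$ as the set $\coprod_i[l_i]$ equipped with the total order in which $(i,b)<(i',b')$ precisely when $\min\psi_i^{-1}(b)<\min\psi_{i'}^{-1}(b')$ (a genuine total order, as the sets $\psi_i^{-1}(b)$ partition $I$), let $\phi$ collapse each block $[l_i]$ to $i$, and let $\sigma$ restrict to $\psi_i$ on $(\phi')^{-1}(i)$; conversely, given the left-hand data, put $\phi'=\phi\circ\sigma$, $l_i=\#\phi^{-1}(i)$, and let $\psi_i$ be $\sigma$ restricted to $(\phi')^{-1}(i)=\sigma^{-1}(\phi^{-1}(i))$. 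The single point that really needs verification is that all of these stay inside the class of \emph{shuffling} surjections; this follows from the elementary observation that for a shuffling surjection $f$ the assignment $a\mapsto\min f^{-1}(a)$ is order-preserving, whence composites of shuffling surjections, and restrictions of a shuffling surjection to the preimage of an order-embedded subset, are again shuffling surjections. I expect this bookkeeping — and the matching of decorations — to be the main, though entirely routine, obstacle. On decorations the bijection matches the $\calP$- and $\calQ$-factors literally, while the $\calR$-factors appear in two a priori different linear orders on the two sides and are identified through the symmetry of $\otimes$ in $\Vect$ (with the usual Koszul signs in the graded setting); this fixes the associator, and its naturality is clear from the construction.

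It remains to check the coherence axioms. For the triangle identity one simply traces the above identifications through in the case where one of $\calP,\calQ,\calR$ equals $\mathbf 1$. For the pentagon I would avoid a direct five-way comparison by first observing that iterating $\circ_{sh}$ any number of times, in any bracketing, produces the same functor: $(\calP_1\circ_{sh}\cdots\circ_{sh}\calP_n)(I)$ is naturally the direct sum, over all levelled shuffle trees with $n$ levels of internal vertices and leaf set $I$ — the local condition at each vertex being exactly a shuffling-surjection condition on the minima of the subtrees attached to its inputs — of the tensor products of the corresponding components $\calP_1(\ )\otimes\cdots\otimes\calP_n(\ )$. Each bracketed iterate is canonically isomorphic to this bracketing-free description, each of our associators is one such canonical isomorphism, so the pentagon (and indeed Mac Lane coherence) holds automatically. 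This levelled-tree picture is also the shape in which the monoidal structure will be used throughout the rest of the paper.
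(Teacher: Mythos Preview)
The paper does not prove this proposition; it is simply quoted from~\cite{DK} as a known fact and no argument is given. Your direct verification is correct: the unit and unit constraints are exactly as you describe, your bijection between the indexing data $(\phi,\sigma)$ and $(\phi',(\psi_i)_i)$ for the two bracketings is the right one, and the check that everything stays within the class of shuffling surjections (via the order-preserving map $a\mapsto\min f^{-1}(a)$) is precisely the combinatorial point that makes the shuffle composition associative. Your levelled-tree reformulation for the pentagon is the standard ``unbiased'' picture, and it is in fact the description of iterated shuffle compositions that the present paper relies on when introducing tree monomials in the free shuffle operad.
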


\begin{definition}
A \emph{shuffle operad} is a monoid in the category of non-symmetric collections equipped with the shuffle composition product.
\end{definition}

For the monoidal category of shuffle operads, it is possible to define the bar complex of an operad~$\calO$. The \emph{bar complex} $\mathbf{B}^\bullet(\calO)$ is a dg co-operad freely generated by the suspension $s\calO$; the differential comes from operadic compositions in~$\calO$. Similarly, for a co-operad $\calQ$, it is possible to define the \emph{cobar complex}~$\Omega^\bullet(\calQ)$, which is a dg operad freely generated by~$s^{-1}\calQ$, with the appropriate differential. The bar-cobar construction $\Omega^\bullet(\mathbf{B}^\bullet(\calO))$ gives a free resolution of~$\calO$. This can be proved in a rather standard way, similarly to known proofs in the case of operads, properads etc.~\cite{F,GK,ValPROP}. The general homotopical algebra philosophy mentioned in the introduction is applicable in the case of operads as well; various checks and justifications needed to ensure that are quite standard and similar to the ones available in the literature; we refer the reader to \cite{BM, Fresse, Harper1, 
MarklModel, Rezk, Spitzweck} where symmetric operads are handled. Thus, the Quillen homology of an operad can be computed as homology of its bar complex (since the abelianisation of the bar-cobar construction is the bar complex), though sometimes this complex is too big to handle, so it is important to seek more economic free resolutions. Our approach allows to build free resolutions for shuffle operads with known Gr\"obner bases, thus giving an alternative way to compute Quillen homology. 

\subsection{Symmetric vs shuffle}

Let us explain precisely what information on Quillen homology for symmetric operads ``survives'' in the shuffle world, and what is lost. Of course, the information on the symmetric group actions does get lost. However, we argue that all other relevant structures on the homology do survive. Recall that the forgetful functor ${}^f\colon\calP\to\calP^f$ from the category of symmetric collections to the category of nonsymmetric collections (with the shuffle product) is monoidal~\cite[Prop.~3]{DK}. This easily implies the following 
\begin{proposition}
For a symmetric operad~$\calP$, we have 
\[
\mathbf{B}^\bullet(\calP)^f\simeq\mathbf{B}^\bullet(\calP^f), 
\]
that is the (symmetric) bar complex of $\calP$ is isomorphic, as a shuffle dg co-operad, to the (shuffle) bar complex of~$\calP^f$. 
\end{proposition}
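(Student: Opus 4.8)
The plan is to deduce this proposition directly from the monoidality of the forgetful functor~${}^f$, recalled just above, together with the observation that the bar complex is functorially built out of the monoidal structure alone. First I would recall that for any operad~$\calO$ (symmetric or shuffle) the underlying graded collection of $\mathbf{B}^\bullet(\calO)$ is the cofree co-operad on the suspension $s\calO$, and that as a mere collection this cofree co-operad coincides with the free operad $\calF(s\calO)$, which is computed as a colimit of iterated composition products --- a ``sum over trees'', graded by the number of internal vertices. A monoidal functor automatically commutes with such colimits of iterated composition products, so ${}^f(\calF(s\calP))\cong\calF({}^f(s\calP))=\calF(s\calP^f)$, compatibly with the vertex grading. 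Since the co-operad structure on $\mathbf{B}^\bullet(\calO)$ --- its decomposition map --- is precisely the ``de-grafting'' of trees, expressed purely through the monoidal structure, this same isomorphism is an isomorphism of graded shuffle co-operads $\mathbf{B}^\bullet(\calP)^f\cong\mathbf{B}^\bullet(\calP^f)$, before differentials are taken into account.

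Second, I would check that this isomorphism intertwines the two bar differentials. The bar differential on $\mathbf{B}^\bullet(\calO)$ is the sum, over the internal edges of each tree, of the operations that contract one edge using a single instance of the operadic composition $\gamma_\calO$, with the Koszul signs coming from~$s$. Monoidality of~${}^f$ says exactly that $\gamma_{\calP^f}$ is identified with ${}^f(\gamma_\calP)$ under $(\calP\circ\calP)^f\cong\calP^f\circ_{sh}\calP^f$; hence each edge-contraction summand for $\calP$ is carried to the corresponding summand for $\calP^f$, and the signs agree because the suspension bookkeeping is the same on both sides. Thus the isomorphism of the first step is a morphism of dg shuffle co-operads, and being bijective on underlying collections it is the asserted isomorphism.

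The only point that deserves care --- and which I expect to be the (minor) main obstacle --- is the underlying combinatorial identification: the free symmetric operad on $s\calP$ is a sum over isomorphism classes of labelled rooted trees with vertices decorated by $s\calP$ modulo symmetric-group relabellings, whereas the free shuffle operad on $s\calP^f$ is a sum over \emph{shuffle trees}, that is trees with a canonical planar structure and leaf order. The assertion that these agree after applying~${}^f$ is exactly the content of \cite[Prop.~3]{DK} (each shuffle tree being a distinguished representative of a symmetric-tree orbit); so nothing genuinely new must be proved, and one only has to organise the iterated shuffle composition products so that this identification is the canonical one and is visibly compatible with the vertex grading and with grafting. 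Once that bookkeeping is in place the proposition follows formally --- it is, in essence, a restatement of \cite[Prop.~3]{DK} at the level of bar complexes.
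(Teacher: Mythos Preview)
Your proposal is correct and follows exactly the approach the paper indicates: the paper simply states that the proposition ``easily'' follows from the monoidality of the forgetful functor~${}^f$ (i.e., from \cite[Prop.~3]{DK}), and you have spelled out that deduction in detail. In other words, the paper gives no explicit proof beyond invoking monoidality, and your argument is precisely the expansion of that remark.
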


Appropriate homotopy transfer for homotopy co-operads \cite{DCV} (together with the observation that homotopy co-operad maps on the Quillen homology are up to suspension equal to components of the differential in the minimal model) implies 
\begin{corollary}
For a symmetric operad~$\calP$, we have 
\[
H^Q(\calP)^f\simeq H^Q(\calP^f), 
\]
that is the (symmetric) Quillen homology $\calP$ is isomorphic, as a shuffle homotopy co-operad, to the (shuffle) Quillen homology of~$\calP^f$. Also,
if $\calR_\calO$ denotes the minimal model of an operad $\calO$ in the appropriate category (symmetric or shuffle),, we have
 \[
(\calR_\calP)^f\simeq \calR_{\calP^f}
 \]
as shuffle dg operads.
\end{corollary}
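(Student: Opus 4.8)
The plan is to deduce everything from two structural facts that are, in spirit, already available. First, the forgetful functor ${}^f$ is \emph{strong monoidal} for the shuffle composition products (\cite[Prop.~3]{DK}), and second, it is \emph{exact}: on each arity it merely returns the underlying vector space of the corresponding representation of the symmetric group, so it takes short exact sequences to short exact sequences and commutes with homology. Being additive and exact, ${}^f$ also preserves chain homotopies and contractions (together with their side conditions), and being strong monoidal it takes (co)free constructions to (co)free constructions and preserves the notion of a decomposable differential. All the homotopical data entering the statement will therefore be transported by ${}^f$, and the proof will consist of checking that nothing goes wrong in the transport.

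First I would treat the homotopy co-operad statement. Recall that $H^Q(\calP)$ is the homology of the dg co-operad $\mathbf{B}^\bullet(\calP)$, and that its homotopy co-operad structure is the one produced by the homotopy transfer theorem for homotopy co-operads \cite{DCV} from a chosen contraction of $\mathbf{B}^\bullet(\calP)$ onto its homology. By the Proposition above, ${}^f$ carries the dg co-operad $\mathbf{B}^\bullet(\calP)$ isomorphically onto $\mathbf{B}^\bullet(\calP^f)$; since ${}^f$ is exact, a contraction of $\mathbf{B}^\bullet(\calP^f)$ onto $H^Q(\calP^f)$ pulls back along this isomorphism to a contraction of $\mathbf{B}^\bullet(\calP)^f$ onto $H^Q(\calP)^f$. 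Running homotopy transfer with these two matched contractions produces literally the same transferred structure maps on the two sides, so the tautological identification of underlying collections upgrades to an isomorphism $H^Q(\calP)^f\simeq H^Q(\calP^f)$ of shuffle homotopy co-operads; the standard independence (up to homotopy) of the transferred structure from the choice of contraction shows that this is canonical.

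Next I would deduce the statement about minimal models. By the cited observation, the minimal model $\calR_\calO$ of an operad $\calO$ is, up to the formal suspension bookkeeping, the cobar-type construction whose space of generators is $s^{-1}H^Q(\calO)$ and whose differential has as its components the structure maps of the homotopy co-operad $H^Q(\calO)$. Since ${}^f$ is strong monoidal it takes the free symmetric operad functor to the free shuffle operad functor (the same mechanism that underlies the Proposition above) and is compatible with the bar/cobar differentials; applying ${}^f$ to this description and using the first part of the corollary gives $(\calR_\calP)^f\simeq\Omega^\bullet\!\big(H^Q(\calP)\big)^f\simeq\Omega^\bullet\!\big(H^Q(\calP)^f\big)\simeq\Omega^\bullet\!\big(H^Q(\calP^f)\big)\simeq\calR_{\calP^f}$ as shuffle dg operads. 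Alternatively, and more robustly, one may invoke uniqueness of minimal models: $(\calR_\calP)^f$ is a quasi-free shuffle dg operad on $H^Q(\calP)^f\cong H^Q(\calP^f)$ with decomposable differential (both properties being preserved by the strong monoidal exact functor ${}^f$) and it is a resolution of $\calP^f$ (exactness of ${}^f$), so it must coincide with $\calR_{\calP^f}$.

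I expect the only genuinely delicate point to be the bookkeeping in the homotopy transfer step: one has to make sure that the homotopy transfer theorem for homotopy co-operads of \cite{DCV} is formulated functorially enough that an isomorphism of dg co-operads together with a compatible pair of contractions yields an honest isomorphism of the transferred homotopy co-operads, rather than merely an $\infty$-isomorphism. Once the relevant definitions are fixed this is routine, since every formula involved is built from sums over trees of the co-operad structure maps and the homotopy, all of which an additive strong monoidal functor respects; everything else in the argument is a formal consequence of ${}^f$ being strong monoidal and exact.
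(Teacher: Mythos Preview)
Your proposal is correct and follows essentially the same route as the paper: the paper simply states that the corollary is implied by the previous Proposition ($\mathbf{B}^\bullet(\calP)^f\simeq\mathbf{B}^\bullet(\calP^f)$) together with homotopy transfer for homotopy co-operads \cite{DCV} and the observation that the transferred structure maps are, up to suspension, the components of the differential in the minimal model. Your write-up is a careful unpacking of exactly this argument, with the additional (and welcome) observation that strong monoidality and exactness of ${}^f$ are what make the transport of contractions and of the quasi-free/decomposable-differential structure go through.
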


In particular, this means that on the minimal model in the shuffle category it is in principle possible to introduce a symmetric groups action compatible with the differential so that it becomes precisely the minimal model in the symmetric category.

\subsection{Tree monomials}

Let us recall tree combinatorics used to describe monomials in shuffle operads. See~\cite{DK} for more details.

Basis elements of the free operad are represented by (decorated) trees. A (rooted) \emph{tree} is a non-empty connected directed graph $T$ of genus~$0$ for which each vertex has at least one incoming edge and exactly one outgoing edge. Some edges of a tree might be bounded by a vertex at one end only. Such edges are called \emph{external}. Each tree should have exactly one outgoing external edge, its \emph{output}. The endpoint of this edge which is a vertex of our tree is called the \emph{root} of the tree. The endpoints of incoming external edges which are not vertices of our tree are called \emph{leaves}.

Each tree with~$n$ leaves should be (bijectively) labelled by the standard $n$-element set~$[n]=\{1,2,\ldots,n\}$. For each vertex $v$ of a tree, the edges going in and out of $v$ will be referred to as inputs and outputs at~$v$. A tree with a single vertex is called a \emph{corolla}. There is also a tree with a single input and no vertices called the \emph{degenerate} tree. Trees are originally considered as abstract graphs but to work with them we would need some particular representatives. For a tree with labelled leaves, its \emph{canonical planar representative} is defined as follows. In general, an embedding of a (rooted) tree in the plane is determined by an ordering of inputs for each vertex. To compare two inputs of a vertex~$v$, we find the minimal leaves that one can reach from~$v$ via the corresponding input. The input for which the minimal leaf is smaller is considered to be less than the other one. 

Let us introduce an explicit realisation of the free operad generated by a collection $\calM$. The basis of this operad will be indexed by canonical planar representatives of trees with decorations of all vertices. First of all, the simplest possible tree is the degenerate tree; it corresponds to the unit of our operad. The second simplest type of trees is given by corollas. We shall fix a basis of~$\calM$ and decorate the vertex of each corolla with a basis element; for a corolla with $n$ inputs, the corresponding element should belong to the basis of~$\calV(n)$. The basis for whole free operad consists of all canonical planar representatives of trees built from these corollas 
(explicitly, one starts with this collection of corollas, defines compositions of trees in terms of grafting, and then considers all trees obtained from corollas by iterated shuffle compositions). We shall refer to elements of this basis as \emph{tree monomials}. Vice versa, if we forget the labels of vertices and leaves of a tree monomial~$\alpha\in\calF_\calM$, we obtain a planar tree. We shall refer to this planar tree as \emph{the underlying tree of~$\alpha$}. 

For example, if $\calO=\calF_\calM$ is the free operad for which the component $\calM(n)$ is only non-zero for $n=2$, and $\calM(2)=\k\{\circ\}$, the basis of $\calF_\calM(3)$ is given by the tree monomials
 \[
\LYY{1}{2}{3}{}{}, \quad  \LYY{1}{3}{2}{}{},\quad \text{and} \quad \RYY{2}{3}{1}{}{}\ .
 \]

There are two standard ways to think of elements of an operad defined by generators and relations: using either tree monomials or operations.  For example, the above tree monomials correspond to operations
 \[
(a_1\circ a_2)\circ a_3, \quad (a_1\circ a_3)\circ a_2,\quad \text{and} \quad a_1\circ (a_2\circ a_3)\ .
 \]
Our approach is somewhere in the middle between the two viewpoints: we strongly encourage the reader to think of tree monomials, but to write down the formulas required for definitions and proofs we prefer the language of operations since it makes things more compact. 

\begin{example}\label{ex:tree}
The following is a tree monomial in the free operad $\calF_\calM$ generated by some collection $\calM$ with $\circ,\bullet\in\calM(2)$:
 \[
\XYex\ .
 \]
In the language of operations, it corresponds to the operation \[((a_1\circ a_3)\bullet a_5)\circ ((a_2\bullet a_7)\circ (a_4\circ a_6))\ .\]
\end{example}

Divisors of a tree monomial $\alpha$ in the free operad correspond to a special kind of subgraphs of its underlying tree. Allowed subgraphs contain, together with each vertex, all its incoming and outgoing edges (but not necessarily other endpoints of these edges). Throughout this paper we consider only this kind of subgraphs, and we refer to them as subtrees hoping that it does not lead to any confusion. Clearly, a subtree~$T'$ of every tree~$T$ is a tree itself. Let us define the tree monomial $\alpha'$ corresponding to~$T'$. To label vertices of~$T'$, we recall the labels of its vertices in~$\alpha$.  We immediately observe that these labels match the restriction labels of a tree monomial should have: each vertex has the same number of inputs as it had in the original tree, so for a vertex with $n$ inputs its label does belong to the basis of~$\calM(n)$. To label leaves of~$T'$, note that each such leaf is either a leaf of~$T$, or is an output of some vertex of~$T$. This allows us to assign to each 
leaf~$l'$ of~$T'$ a leaf~$l$ of~$T$, which we call the smallest descendant of~$l'$: if $l'$ is a leaf of~$T$, put $l=l'$, otherwise let~$l$ be the smallest leaf of $T$ that can be reached through~$l'$. We then number the leaves according to their smallest descendants: the leaf with the smallest 
possible descendant gets the label~$1$, the second smallest~--- the label~$2$ etc. 

\begin{example}\label{ex:pattern}
Let us consider the following two choices of subtrees of the tree from Example~\ref{ex:tree}:
 \[
\XYexa\quad \text{and} \quad \XYexb\ .
 \]
In the first case, the subtree marked by bold lines yields the tree monomial $\LYRYAB{1}{5}{2}{4}$, and the "standardisation" re-labelling, as above, gives the tree monomial $\LYRYAB{1}{4}{2}{3}$. 
In the second case, the subtree marked by bold lines yields the tree monomial $\LYRYAB{2}{7}{4}{6}$, and the "standardisation" re-labelling gives the same tree monomial $\LYRYAB{1}{4}{2}{3}$. 
In the language of operations, \[(a_1\bullet a_5)\circ (a_2\circ a_4)\simeq (a_2\bullet a_7)\circ (a_4\circ a_6)\simeq (a_1\bullet a_4)\circ(a_2\circ a_3).\] Thus, $\LYRYAB{1}{4}{2}{3}$ occurs as a divisor of the original tree monomial at two different places.
\end{example}

For two tree monomials $\alpha$, $\beta$ in the free operad $\calF_\calM$, we say that \emph{$\alpha$ is divisible by $\beta$}, if there exists a subtree of the underlying tree of $\alpha$ for which the corresponding tree monomial $\alpha'$ is equal to~$\beta$.

There exist several ways to introduce a total ordering of tree monomials in such a way that the operadic compositions are compatible with that total ordering. A Gr\"obner basis of an ideal $I$ of the free operad is a system $S$ of generators of~$I$ for which the leading monomial of every element of the ideal is divisible by one of the leading terms of elements of~$S$. Such a system of generators allows to perform ``long division'' modulo~$I$, computing for every element its canonical representative. There exists an algorithmic way to compute a Gr\"obner basis starting from any given system of generators (``Buchberger's algorithm for shuffle operads''). For our purposes, it is important to note that if the tree monomials of our operad have additional internal grading, and the relations are homogeneous with respect to that grading, then the corresponding reduced Gr\"obner basis is also homogeneous, as well as all our homological constructions.

\subsection{Operads in the differential graded setting} 

The above description of the free shuffle operad works almost literally when we work with operads whose components are chain complexes (as opposed to vector spaces), and the symmetric monoidal structure on the corresponding category involves signs. The only difference is that every tree monomial should carry an ordering of its internal vertices, so that two different orderings contribute appropriate signs. In this section, we give an example of a shuffle dg operad that should help a reader to understand the graded case better; this operad was introduced and explored in~\cite{MR}.

\begin{definition}
The \emph{odd $(2k+1)$-associative operad} is a non-symmetric operad with one generator~$\mu$ of arity~$2k+1$ and odd homological degree, and relations 
 \[
\mu\circ_p\mu=\mu\circ_{2k+1}\mu \quad \text{for all} \quad p\le 2k.   
 \]
\end{definition}

Let us show that the Buchberger algorithm for operads from~\cite{DK} discovers a cubic relation in the Gr\"obner basis for this operad, thus showing that this operad fails to be PBW in the sense of Hoffbeck~\cite{Hoffbeck} (for this particular ordering). We use the path-lexicographic ordering of monomials. 

From the common multiple $(\mu\circ_1\mu)\circ_1\mu$ of the leading term $\mu\circ_1\mu$ with itself, we compute the S-polynomial
\[
(\mu\circ_{2k+1}\mu)\circ_1\mu-\mu\circ_1(\mu\circ_{2k+1}\mu). 
\]
We can perform the following chain of reductions (with leading monomials underlined):
\begin{multline*}
(\mu\circ_{2k+1}\mu)\circ_1\mu-\underline{\mu\circ_1(\mu\circ_{2k+1}\mu)}=
(\mu\circ_{2k+1}\mu)\circ_1\mu-\underline{(\mu\circ_1\mu)\circ_{2k+1}\mu}\mapsto\\ \mapsto
\underline{(\mu\circ_{2k+1}\mu)\circ_1\mu}-(\mu\circ_{2k+1}\mu)\circ_{2k+1}\mu=
-\underline{(\mu\circ_1\mu)\circ_{4k+1}\mu}-(\mu\circ_{2k+1}\mu)\circ_{2k+1}\mu\mapsto\\ \mapsto
-(\mu\circ_{2k+1}\mu)\circ_{4k+1}\mu-\underline{(\mu\circ_{2k+1}\mu)\circ_{2k+1}\mu}=
-(\mu\circ_{2k+1}\mu)\circ_{4k+1}\mu-\underline{\mu\circ_{2k+1}(\mu\circ_1\mu)}\mapsto\\ \mapsto
-(\mu\circ_{2k+1}\mu)\circ_{4k+1}\mu-\mu\circ_{2k+1}(\mu\circ_{2k+1}\mu)=
-2(\mu\circ_{2k+1}\mu)\circ_{4k+1}\mu.
\end{multline*}
Note that we used the formula $(\mu\circ_{2k+1}\mu)\circ_1\mu=-(\mu\circ_1\mu)\circ_{4k+1}\mu$ which reflects the fact that the operation $\mu$ is of odd homological degree.

The monomial $(\mu\circ_{2k+1}\mu)\circ_{4k+1}\mu$ cannot be reduced further, and we recover the relation $(\mu\circ_{2k+1}\mu)\circ_{4k+1}\mu=0$ discovered in~\cite{MR}. Furthermore, we arrive at the following proposition (note the similarity with the computation of the Gr\"obner basis for the operad~$\AntiCom$ in~\cite{DK}).

\begin{proposition}
Elements $\mu\circ_p\mu-\mu\circ_{2k+1}\mu$ with $1\le p\le 2k$ and $(\mu\circ_{2k+1}\mu)\circ_{4k+1}\mu$ form a Gr\"obner basis for the operad of odd $(2k+1)$-associative algebras.
\end{proposition}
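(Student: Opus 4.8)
The plan is to carry Buchberger's algorithm for shuffle operads \cite{DK} to completion, much as in the treatment of $\AntiCom$ in \cite{DK}. The displayed computation already handled the S-polynomial of $\mu\circ_1\mu$ with itself (over the common multiple $(\mu\circ_1\mu)\circ_1\mu$) and forced us to adjoin the extra element $L:=(\mu\circ_{2k+1}\mu)\circ_{4k+1}\mu$; what remains is to check that \emph{every} S-polynomial of two elements of the set $S$ --- the quadratic relations together with $L$ --- reduces to zero modulo $S$. First I would record the leading terms with respect to the path-lexicographic order: these are $L_p:=\mu\circ_p\mu$ for $1\le p\le 2k$ (as used above, $\mu\circ_{2k+1}\mu$ is the smallest of the two-vertex monomials $\mu\circ_q\mu$, so each quadratic relation has leading term $\mu\circ_p\mu$), together with the single \emph{monomial} $L$. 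One should also note that $S$ lies in, and generates, the defining ideal; the only point that is not immediate concerns $L$, and it is precisely what the displayed reduction gives, since it exhibits $-2L$ as an element of the ideal generated by the quadratic relations --- here, and only here, one uses that $2$ is invertible in $\k$.

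Next I would list the small common multiples of pairs of leading terms. Since $L$ is a monomial, any tree monomial divisible by $L$ reduces to $0$ in one step; so the only S-polynomials requiring attention come from a pair $(L_p,L_q)$ with $1\le p,q\le 2k$ --- the ``tower'' common multiple $(\mu\circ_p\mu)\circ_{p+q-1}\mu$, and, when $p\ne q$, the ``branching'' common multiple $(\mu\circ_q\mu)\circ_p\mu$ --- and from a pair $(L_p,L)$ --- the four-vertex trees obtained by grafting one extra binary corolla at an input $\le 2k$ of a vertex of the underlying tree of $L$, or at an input $\le 2k$ of a new root below it; the pairs $(L,L)$ give multiples of $L$ and reduce to $0$ with no work. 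In each remaining case the reduction imitates the one displayed: repeated use of the quadratic relations pushes every internal graft onto the last input $2k+1$, so that the S-polynomial rewrites into (a combination of) tree monomials divisible by $L$, whence it is killed by $L$. Once Buchberger's criterion is verified, $S$ is a Gr\"obner basis; as a corollary the only tree monomials not divisible by a leading term are $\mu$ and $\mu\circ_{2k+1}\mu$, so the operad is finite-dimensional --- recovering the computation of \cite{MR}.

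The one genuine obstacle is the ground field. The displayed computation produces $-2L$, not $L$, and over a field of characteristic $2$ the element $L$ does not lie in the ideal of the quadratic relations at all: there the odd signs disappear and the operad coincides with the totally associative $(2k+1)$-ary operad, which is not finite-dimensional. So the statement really needs $\operatorname{char}\k\ne 2$. Beyond this, the work is pure bookkeeping --- tracking which input of which vertex a leaf of a composite tree corresponds to after a reduction, and carrying the Koszul signs from the odd degree of $\mu$ through the reassociations --- and no individual reduction presents any difficulty, since all of them funnel onto the single monomial $L$.
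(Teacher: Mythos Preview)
Your approach is exactly what the paper intends: the proposition is stated immediately after the displayed S-polynomial computation with only the parenthetical hint to imitate the $\AntiCom$ computation in \cite{DK}, so Buchberger's algorithm is left to the reader, and your enumeration of the remaining small common multiples together with the reduction strategy (push every graft to slot $2k+1$, then kill by the monomial $L$) is the right way to finish it. Your observation on characteristic~$2$ is a genuine and necessary caveat that the paper omits: the displayed reduction yields $-2L$, so $L$ lies in the ideal of the quadratic relations only when $2$ is invertible, and the paper's blanket ``arbitrary field'' is too generous for this particular proposition.
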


\section{Resolution for monomial relations}\label{oper_monom}

Assume that the operad~$\calO=\calF_\calM/(\calG)$ is generated by a collection of finite sets $\calM=\{\calM(n)\}$, and that $\calG$ consists of tree monomials, so we are dealing with an operad that only has monomial relations. We shall explain how to construct a free resolution of~$\calO$. 

Our first step is to construct a free shuffle dg operad $\calA$ which does not take the account relations of $\calO$; it is a somewhat universal object for operads generated by~$\calM$, various suboperads of $\calA$ will be used as resolutions for various choices of $\calG$.

\subsection{The inclusion--exclusion operad}

Let $T$ be a tree monomial, and let the symbols $S_1,\ldots,S_q$ be in one-to-one correspondence with all the divisors of $T$. We denote by $\calA(T)$ the vector space $\k T\otimes\Lambda(S_1,\ldots,S_q)$. We shall say that \emph{underlying tree monomial} for elements of this vector space is~$T$. The degree~$-1$ derivations $\partial_i$ on the exterior algebra defined by the rule $\partial_i(S_j)=\delta_{ij}$ anticommute, and the differential $d=\sum_{i=1}^q\partial_i$ makes $\calA(T)$ into a chain complex isomorphic to the augmented chain complex of a $(q-1)$-dimensional simplex $\Delta^{q-1}$. 

By definition, the chain complex $\calA(n)$ is the direct sum of complexes $\calA(T)$ over all tree monomials $T$ with $n$ leaves. There is a natural operad structure on the collection $\calA=\{\calA(n)\}$; the operadic composition composes the trees, and computes the wedge product of symbols labelling their divisors. Overall, we defined a shuffle dg operad, which we shall call \emph{the inclusion--exclusion operad}. 

Let us emphasize that the symbols $S_{i_r}$ correspond to divisors, i.e. mark \emph{occurrences of tree monomials in $T$} rather than monomials themselves, so in particular the Koszul sign rule does not imply that a composition of an element of our operad with itself is equal to zero.  Basically, when computing products, the $S$-symbols ``remember'' which divisors of factors they come from. Graphically, it is convenient to think of basis elements of our chain complex as tree monomials with some of the occurrences of relations additionally marked, as in Example~\ref{ex:pattern}.

\smallskip

The following example should make our construction more clear.

\begin{example}
Assume that the operad $\calO$ has two binary generators~$\circ$ and~$\bullet$. Then the corresponding operad $\calA$ contains, among others, two elements
 $$
\XYexcc \quad \text{and}\quad \LYYb{1}{3}{2}{}\ .
 $$
An appropriate shuffle composition of these two produces the element
 $$
\XYexcd\ ,
 $$
where two different divisors are marked. Incidentally, the underlying tree monomial for each of them is $\LYY{1}{3}{2}{}$. Let us call the first tree $T$, the second tree $T'$, the first divisor $S$, and the second divisor $S'$. On the level of formulas, we have
\[
(T\otimes S)\circ_\phi(T'\otimes S')=(T\circ_\phi T')\otimes S_1\wedge S_2, 
\]
where $\phi\colon\{1,2,3,4,5,6,7\}\twoheadrightarrow\{1,2,3,4,5\}$ is the shuffle surjection with $\phi(1)=\phi(3)=\phi(5)=1$, $\phi(2)=2$, $\phi(4)=3$, $\phi(6)=4$, $\phi(7)=5$, and $S_1$ and $S_2$ indicate the two different divisors of $T\circ_\phi T'$ equal to~$\LYY{1}{3}{2}{}$. Note that even though the underlying tree monomials of the two divisors coincide, the wedge product is not equal to zero, since the letters $S$ correspond to distinct divisors, that is occurrences of tree monomials, not tree monomials themselves.
\end{example}

In fact, the underlying operad of the dg operad $\calA$ is free. Indeed, let us call a ``monomial'' $T\otimes S_{i_1}\wedge\cdots\wedge S_{i_q}$, $q\ge 0$, \emph{indecomposable}, if it is not a composition in the operad $\calA_\calG$ of two monomials of the same type. (This means that each edge between the two internal vertices of~$T$ is an edge between two internal vertices of at least one of the divisors $S_{i_1}$, \ldots, $S_{i_q}$; note that some of the internal vertices of $T$ are leaves of its divisors, and hence are not considered internal vertices of the respective divisors.) It is easy to see that $\calA$ is freely generated by indecomposable elements; those are elements $m\otimes 1$ with $m\in\calM$ being a generator of~$\calO$, and indecomposable monomials $T\otimes S_{i_1}\wedge\cdots\wedge S_q$, $q\ge1$. For each monomial $T\otimes S_{i_1}\wedge\cdots\wedge S_{i_q}$ different from the generators described above, and each internal edge of $T$ that is not an internal edge of either of $S_{i_1}$, \
ldots, $S_{i_q}$, the endpoint of that edge which is further from the root of $T$ is a ``grafting point'': the subtree growing from this vertex (and its divisors among $S_{i_j}$) factors out in our operad. This factorisation procedure gives a unique way to factorise elements as compositions of generators.

So far we did not use the relations of our operad. Let us incorporate relations in the picture. 

\subsection{Suboperads of the inclusion--exclusion operad}

Let $\calG$ be the set of relations of our operad~$\calO$. The dg operad $(\calA_\calG,d)$ is defined similarly to $\calA$, but with the additional restriction that every symbol $S_k$ corresponds to a divisor of $T$ for which the underlying tree monomial is a relation. The differential~$d$ is the restriction of the differential defined above. Informally, an element of the operad $\calA_\calG$ is a tree with some distinguished divisors that are relations from the given set.

\begin{theorem}
The dg operad $(\calA_\calG,d)$ is a free resolution (as a shuffle operad) of the corresponding operad with monomial relations $\calO=\calF_\calM/(\calG)$.
\end{theorem}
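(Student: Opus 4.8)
The plan is to show that $(\calA_\calG, d)$ is quasi-isomorphic to $\calO$ (concentrated in homological degree $0$), and that the augmentation map $\calA_\calG \to \calO$ is a surjective quasi-isomorphism of shuffle operads. Since we already observed that the underlying operad of $\calA$ (and hence of $\calA_\calG$) is free, this will establish the claim. The augmentation $\varepsilon\colon \calA_\calG \to \calO$ sends a tree monomial $T\otimes 1$ to the image of $T$ in $\calO = \calF_\calM/(\calG)$ and kills everything of positive homological degree (every monomial $T\otimes S_{i_1}\wedge\cdots\wedge S_{i_q}$ with $q\ge 1$). It is a chain map because the differential $d$, applied to $T\otimes S_{i_1}\wedge\cdots\wedge S_{i_q}$, produces terms with the same underlying tree $T$; in homological degree $1$, $d(T\otimes S_i) = T\otimes 1$ where $T$ is divisible by a relation, so $\varepsilon$ kills the image of $d_1$, and conversely $\varepsilon$ is clearly onto.

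The key step is a direct computation of the homology of $\calA_\calG$. I would argue tree-monomial by tree-monomial: the complex $\calA_\calG$ splits as a direct sum over all tree monomials $T\in\calF_\calM$ of the subcomplexes $\calA_\calG(T)$, because $d$ preserves the underlying tree. Now fix $T$, and let $S_1,\ldots,S_p$ be the divisors of $T$ whose underlying tree monomial lies in $\calG$ — i.e., the occurrences of relations inside $T$. Then $\calA_\calG(T) = \k T\otimes \Lambda(S_1,\ldots,S_p)$ with differential $d = \sum_{i=1}^p \partial_i$, which is exactly the (shifted) augmented simplicial chain complex of the $(p-1)$-simplex on vertices $S_1,\ldots,S_p$. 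This complex is acyclic when $p\ge 1$ and equals $\k T$ in degree $0$ when $p = 0$. Therefore $H_*(\calA_\calG) = \bigoplus_{T : p(T)=0}\k T$ concentrated in degree $0$, where the sum is over tree monomials $T$ that contain no occurrence of any relation in $\calG$ — precisely the normal (reduced) tree monomials forming a basis of $\calF_\calM/(\calG) = \calO$. Matching this with the description of $\varepsilon$ shows $H_*(\calA_\calG) \cong \calO$ via $\varepsilon$, so $\varepsilon$ is a quasi-isomorphism.

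The main obstacle, and the point requiring genuine care, is the bookkeeping in the previous paragraph: one must check that the divisors $S_i$ of $T$ appearing in $\calA_\calG(T)$ are genuinely \emph{distinct} symbols indexed by distinct occurrences (subtrees) of relations, as stressed in the discussion after the definition of $\calA$, so that $\Lambda(S_1,\ldots,S_p)$ is an honest exterior algebra on $p$ generators and the simplex-acyclicity argument applies verbatim; the Koszul sign subtlety flagged in the text (that a composition of an element with itself need not vanish) is exactly what guarantees no unwanted collapse. One should also note that the free generators of the underlying operad of $\calA_\calG$ are the indecomposable monomials (those $m\otimes 1$ with $m$ a generator, together with indecomposable $T\otimes S_{i_1}\wedge\cdots\wedge S_{i_q}$, $q\ge 1$, all of whose distinguished divisors are relations), so that $\calA_\calG$ is indeed \emph{free} as required, and that in the differential-graded setting the ordering of internal vertices contributes the signs already accounted for in the simplicial identification. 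Finally, if there is an internal weight grading with respect to which $\calG$ is homogeneous, the entire argument respects it, so the resolution is a graded free resolution.
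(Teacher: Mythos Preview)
Your proof is correct and follows essentially the same approach as the paper: both decompose the complex $\calA_\calG$ as a direct sum over tree monomials $T$, identify each summand with the augmented chain complex of a $(p-1)$-simplex where $p$ is the number of occurrences of relations in $T$, and conclude by acyclicity of the simplex. Your additional remarks on the augmentation map, freeness, and the bookkeeping of distinct occurrences are accurate elaborations of points the paper treats more tersely.
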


\begin{proof}
Similarly to the case of the operad $\calA$, the operad $\calA_\calG$ is freely generated by its elements $m\otimes 1$ with $m\in\calM$ and all indecomposable monomials $T\otimes S_1\wedge\cdots\wedge S_q$, $q\ge1$, where each of the divisors $S_i$ is a relation of~$\calO$.

Let us prove that $\calA_\calG$ provides a resolution for $\calO$. Since the differential $d$ only omits wedge factors but does not change the tree monomial, the chain complex $\calA_\calG$ is isomorphic to the direct sum of chain complexes $\calA_\calG^T$ spanned by the elements for which the first tensor factor is the given tree monomial~$T$. If $T$ is not divisible by any relation, the complex $\calA_\calG^T$ is concentrated in degree~$0$ and is spanned by~$T\otimes 1$. Thus, to prove the theorem, we should show that $\calA_\calG^T$ is acyclic whenever $T$ is divisible by some relation $g_i$.

Assume that there are exactly $k$ divisors of~$T$ which are relations of~$\calO$. We immediately see that the complex~$\calA_\calG^T$ is isomorphic to the chain complex of a simplex $\Delta^{k-1}$ which is acyclic whenever $k>0$.
\end{proof}

\begin{remark}
Using the machinery of twisting cochains~\cite{twisting}, one can obtain from the free dg operad resolution a resolution of the trivial module by free right modules whose spaces of generators of various homological degrees are the same as the spaces of generators of the original operad resolution. More precisely, the differential of every generator in our free operad resolution is a sum of compositions of generators; this provides the space of generators with a structure of a homotopy co-operad, and the twisting cochain method applies. See~\cite{Lefevre-Hasegawa,Proute} for details in the (simpler) case of associative algebras, and \cite{DCV} for details in the operad case. 
\end{remark}

\section{Homology classes for monomial operads}\label{homology_classes}

In general, the fact that ``trees grow in several different directions'', means that it is more difficult to describe representatives of homology classes combinatorially in the same way as it can be done for the case of associative algebras~\cite{Anick}. However, in some cases it is possible to come up with a reasonable description. In this section, we shall describe homology classes under a minor restriction on the combinatorics of defining relations. 

\subsection{Algebraic Morse theory: recollections}
To obtain our description, we use the algebraic Morse theory developed independently in~\cite{AlgMorse, Kozlov, Skoldberg}. We refer the reader to those references for details; for our purposes, the algebraic Morse theory is a way to describe a smaller subcomplex of a chain complex having the same homology. It is done as follows. Suppose that our chain complex $(C_\bullet,d)$ has a basis $X=\sqcup_{i\ge1} X_i$, where $X_i$ is the basis of the space $C_i$ of our complex; this basis should be finite or satisfy some local finiteness condition (i.e., internal grading). We consider a directed graph $\Gamma$ whose vertex set $V$ coincides with $X$, and the edge set $E$ reflects the combinatorics of the differential: there is an edge from $x\in X_i$ to $y\in X_{i-1}$ if $y$ appears in $d(x)$ with a non-zero coefficient. A set of edges $M\subset E$ is called a \emph{Morse matching} if two conditions are satisfied:
\begin{enumerate}
 \item every vertex of $\Gamma$ belongs to at most one edge from $M$;
 \item the graph $\Gamma'$ on the vertex set $V$ whose edge set is the union of $E\setminus M$ with the set of all edges of $M$ reversed has no directed cycles.
\end{enumerate}

The vertices that do not belong to any edge of~$M$ are called \emph{critical}. The subset of critical vertices of $X_i$ is denoted by~$X_i^M$. The key result of algebraic discrete Morse theory states that there is a way to define a new differential $d^M$ on the linear span $C^M_\bullet$ of critical vertices so that the chain complex $(C^M_\bullet,d^M)$ is quasi-isomorphic to $(C_\bullet,d)$. The only property of this differential that we shall really need is that its ``structure constants'', that is the coefficients $[v\colon v']$ in the formula
\[
 d^M(v)=\sum_{v'}[v\colon v'] v'
\]
are defined as sums over paths from $v$ to $v'$ in the graph $\Gamma'$.

\subsection{Homology classes via algebraic Morse theory}
Let $\calO=\calF_\calM/(\calG)$ be an operad generated by the collection~$\calM$ with monomial relations. Quillen homology $H^Q(\calO)$ is isomorphic to the homology of the differential induced on the space of indecomposable elements $(\calA_\calG)^{ab}$ of the operad~$\calA_\calG$. That space of generators, as a chain complex, can be decomposed into a direct sum of chain complexes $(\calA_\calG)^{ab}_T$ spanned by the elements for which the underlying tree monomial is the given tree monomial~$T$, and therefore the Quillen homology acquires a direct sum decomposition 
 \[
H^Q(\calO)=\bigoplus_{T \text{ a tree monomial}}H^Q_T(\calO).
 \]

We shall define Morse matchings of chain complexes $(\calA_\calG)^{ab}_T$ under some technical conditions which we believe to be not very restrictive; at least in all naturally arising examples that we discuss throughout the paper these conditions are fulfilled.
 
\begin{definition}
For a tree monomial $T$, we call a numbering of the set of all divisors of $T$ that are relations of $\calO$ an \emph{Anick numbering} if whenever $i<j<k$ and $S_i\cap S_j\ne\varnothing$ we have $S_i\cap S_k\subset S_j\cap S_k$. (Here and below by intersection we mean the most na\"ive combinatorial intersection of divisors inside~$T$.)
\end{definition}

Let us give two examples of Anick numberings. The first one, which we shall use as a toy model in this section, explains the term we chose: we shall see that in the case of associative algebras it corresponds to the numbering of divisors used by Anick~\cite{Anick}. 

\begin{example}\label{Anick-divisors}
Suppose that our operad $\calO$ with monomial relations is generated by unary operations. In this case, it is nothing but an associative algebra with monomial relations. If we number subwords of the given word which are relations according to the  position of the first letter, the corresponding ordering is manifestly an Anick ordering.
\end{example}

The second example is new, and has a genuine operadic meaning to it; we shall discuss an application of this result in Section~\ref{com-alg}.

We assume that our operad is generated by elements of arity~$2$. Let us use the usual terms ``left combs'' and ``right combs'' for tree monomials corresponding to the operations of the form 
 \[
\alpha_1(\alpha_2(\ldots (\alpha_k(1,i_2),i_3),\ldots, i_k),i_{k+1})\quad \text{and}\quad \alpha_1(1,\alpha_2(2,\ldots \alpha_k(k,k+1)\ldots))  
 \]
respectively: left combs are obtained from the generators by iterated compositions in the first slot, and right combs are obtained from the generators by iterated compositions in the last slot. 

\begin{proposition}\label{left-right-combs}
Suppose that $\calO=\calF_\calM/(\calG)$ is an operad with binary generators and monomial relations, $\calG=\calG_l\cup\calG_r$ where $\calG_l$ consists of left combs and $\calG_r$ consists right combs, and at least one of the sets $\calG_l$, $\calG_r$ is contained in $\calF_\calM(3)$. Then for each tree monomial $T$, the set of all divisors of $T$ that are relations admits an Anick numbering. 
\end{proposition}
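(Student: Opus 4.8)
The plan is to exhibit an explicit Anick numbering by assigning to each relation-divisor of $T$ a single ``anchor'' vertex, ordering the divisors by a depth-first traversal of $T$ applied to their anchors, and breaking ties by a carefully chosen rule; checking the Anick condition then reduces to a finite case analysis on how two divisors can overlap inside $T$. By symmetry --- interchanging the roles of the first and second input of every vertex, which replaces left combs by right combs and the traversal below by its mirror image --- it suffices to treat the case $\calG_r\subseteq\calF_\calM(3)$, so that every right-comb relation has exactly two internal vertices. I would also first assume that $\calG$ is reduced (no relation divides another), the general case following from one additional tie-break (nested left-comb divisors on a common spine ordered shortest-first).

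Next I would record the combinatorial geometry. For a vertex $v$ of $T$ let $\lambda(v)$ be the smallest leaf below $v$; the vertices with a fixed value of $\lambda$ form a chain, the \emph{first-child spine} through $v$, and these spines partition the internal vertices of $T$ and are themselves organised into a tree, a child spine being attached to its parent spine at a single vertex in the second slot. The elementary facts I need are: every left-comb divisor is a contiguous sub-chain of one spine; since right-comb relations are quadratic, every right-comb divisor $\{w\to w'\}$ has $w'$ equal to the top vertex of its own spine, so it is essentially an edge of the spine-tree with a marked attaching vertex; two left-comb divisors can meet only if they lie on a common spine, in which case their intersection is the intersection of the two sub-chains, or on two adjacent spines, in which case it is the single bridging edge; and every intersection $S\cap S'$ in which $S$ or $S'$ is a right comb is \emph{local}, being a single vertex with its incident edges, or a single edge. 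I would tabulate these intersection types precisely, since they are exactly what the verification uses.

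With this in hand I would define the numbering: the anchor of a divisor $S$ is its top (root) vertex, and the divisors are ordered by the depth-first pre-order of $T$ (explore the first slot before the second) applied to their anchors, breaking ties by placing left combs before right combs (and, in the non-reduced case, the shortest nested left comb first). Under the reduced hypothesis this is a genuine total order. Then I would verify the Anick condition --- for $i<j<k$ with $S_i\cap S_j\neq\varnothing$, show $S_i\cap S_k\subseteq S_j$ (equivalently $\subseteq S_j\cap S_k$) --- by cases on the types of $S_i,S_j,S_k$. When all three are left combs on one spine, this is Anick's original interval argument: reducedness rules out nesting, and along a spine the pre-order is exactly the top-to-bottom order. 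In the remaining cases $S_i\cap S_k$ is concentrated at one vertex or edge, and one traces how the pre-order of the anchors, together with the tie-break, forces that vertex or edge and the relevant incident edges to lie in $S_j$.

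The main obstacle is this last case analysis, in particular the mixed left-/right-comb triples. The difficulty is exactly the ``trees grow in several directions'' phenomenon the paper warns about: a tree has no single left-to-right coordinate, so a naive ``order by position'' is impossible, and a quadratic right comb may bridge two distinct spines, or may sit directly above the root of a left comb and ``reach upward'' into the parent spine --- configurations for which the obvious global orderings (all left combs first, then all right combs, or the reverse) already violate the Anick condition. The substance of the proof is to pin down the anchor and the tie-break so that all such configurations satisfy the required containment; once the numbering is fixed, each individual case is a short, if fiddly, check of which vertices and edges of $T$ lie in each of the three divisors.
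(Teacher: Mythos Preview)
Your overall framework --- anchor each divisor at its root vertex, order by some tree traversal, break ties by type --- is exactly the right shape, and it is essentially what the paper does.  The gap is in the specific choice of traversal: the depth-first \emph{pre}-order (root before descendants), together with the tie-break ``left before right'', does \emph{not} give an Anick numbering.

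Here is a small counterexample in your setup $\calG_r\subset\calF_\calM(3)$.  Take $T$ with root $v_1$, first child $v_2$, second child $w$; let $v_3$ be the first child of $v_2$.  Suppose the (reduced) relation set contains the left combs $L_1$ on $\{v_1,v_2\}$ and $L_2$ on $\{v_2,v_3\}$ and the quadratic right comb $R$ on $\{v_1,w\}$.  Your pre-order visits $v_1$ before $v_2$, and at $v_1$ you put $L_1$ before $R$, so the numbering is $L_1<R<L_2$.  Now take the triple $(i,j,k)=(L_1,R,L_2)$: we have $L_1\cap R=\{v_1\}\ne\varnothing$, but $L_1\cap L_2=\{v_2\}$ while $R\cap L_2=\varnothing$, so $S_i\cap S_k\not\subset S_j\cap S_k$.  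Swapping the tie-break to ``right before left'' does not save the pre-order either: with $R_1=\{v_1,w_1\}$, $R_2=\{w_1,w_2\}$ (where $w_1$ is the second child of $v_1$ and $w_2$ the second child of $w_1$) and $L=\{v_1,v_2\}$, you get $R_1<L<R_2$ and the same failure at $(R_1,L,R_2)$.  The Anick condition is genuinely asymmetric in $i$ and $k$, so reversing an Anick numbering is not again an Anick numbering; your order is the reverse of (an extension of) the one that works.

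The fix is to order the anchors the other way: declare $S\prec S'$ whenever the root of $S'$ lies on the path from the root of $T$ to the root of $S$ (so divisors with \emph{deeper} roots come first), and break ties at a shared root by putting the quadratic type before the other (in your convention, right combs before left combs).  Then the three overlap configurations you listed --- two combs of the same type with one root above the other, and a left/right pair sharing a root --- are the only ways $S_i\cap S_j\ne\varnothing$ can occur, and in each case a short check shows that any $S_k$ with $k>j$ that still meets $S_i$ must meet it inside $S_j$.  Your spine decomposition and the tabulation of intersection types are exactly the right preparation for that case analysis; only the direction of the traversal needs to be flipped.
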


\begin{proof}
Without loss of generality, $\calG_l\subset \calF_\calM(3)$. Let us define a partial ordering of the set of all divisors of $T$ that are relations of $\calO$ as follows: $S\prec S'$ if the root of $S'$ is on the path from the root of $T$ to the root of $S$, or if $S$ and $S'$ share the same root, $S$ is a left comb and $S'$ is a right comb. Let us prove that if we extend this partial ordering to a total ordering in any way, and consider the numbering of the divisors according to that total ordering in the increasing order, the numbering thus obtained is an Anick numbering. Indeed, suppose that $i<j<k$ and $S_i\cap S_j\ne\varnothing$. There are three different situations when this can happen:
\begin{itemize}
 \item[-] $S_i$ and $S_j$ are left combs, and the root of $S_j$ is on the path from the root of $T$ to the root of $S_i$,
 \item[-] $S_i$ and $S_j$ are right combs, and the root of $S_j$ is on the path from the root of $T$ to the root of $S_i$,
 \item[-] $S_i$ is a left comb, $S_j$ is a right comb, and they share a vertex which is a root vertex of $S_i$.
\end{itemize}
The only situation when the Anick numbering condition $S_i\cap S_k\subset S_j\cap S_k$ may fail is when $S_i\cap S_k\ne\varnothing$. 

In the first case, if $S_k$ is a left comb as well, the condition is manifestly fulfilled. Otherwise, if $S_k$ is a right comb, its intersection with $S_i$ consists of exactly one vertex. If that vertex is the root of $S_i$, then it is contained in $S_j$, and the condition is fulfilled. If it is not the root of $S_i$, it has to be the root of $S_k$, so the root of $S_j$ is manifestly on the path from the root of $T$ to the root of~$S_k$, so $S_k\prec S_j$, a contradiction. 

In the second case, if $S_k$ is a right comb as well, the condition is manifestly fulfilled. Otherwise, if $S_k$ is a left comb, its intersection with $S_i$ consists of exactly one vertex. If that vertex is the root of $S_i$, then it is contained in $S_j$, and the condition is fulfilled. If it is not the root of $S_i$, it has to be the root of $S_k$, so the root of $S_j$ is manifestly on the path from the root of $T$ to the root of~$S_k$, so $S_k\prec S_j$, a contradiction. 

In the third case, if $S_k$ is a right comb, then since $S_i\cap S_k\ne\varnothing$, we instantly conclude that $S_k\prec S_i$, a contradiction. If $S_k$ is a left comb, then its intersection with $S_i$ may only consist of one vertex, which is precisely the root vertex of $S_i$, that is the intersection of $S_i$ and $S_j$.
\end{proof}

Throughout this section, we always assume that we are dealing with monomial relations for which for each $T$ the set of its divisors that are relations admits an Anick numbering. Under this assumption, we shall prove the following result.

\begin{theorem}\label{representatives}
A basis of $H^Q_T(\calO)$ is in one-to-one correspondence with basis elements $v\in(\calA_\calG)^{ab}_T$ for which the following two properties hold:
\begin{itemize}
 \item[($I$)] for each $S_j$ present in $v$, we have $\partial_j(v)=0$ in $(\calA_\calG)^{ab}$; in other words, after removing $S_j$, $v$ becomes decomposable,
 \item[($II$)] for each $S_j$ not present in~$v$, there exists $i<j$ for which $\partial_i(v\wedge S_j)\ne0$ in $(\calA_\calG)^{ab}$; in other words, after marking $S_j$ in $v$ it is possible to remove the mark from the divisor $S_i$ for some $i<j$ so that the result is indecomposable.  
\end{itemize}
\end{theorem}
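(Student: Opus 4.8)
The plan is to construct an explicit Morse matching on each chain complex $(\calA_\calG)^{ab}_T$ whose critical cells are precisely the basis elements $v$ satisfying conditions $(I)$ and $(II)$, and then invoke the key theorem of algebraic Morse theory to conclude that $H^Q_T(\calO)$ has a basis indexed by these critical cells. The matching should be built using the Anick numbering of the divisors of $T$ that are relations. Given a basis element $v = T\otimes S_{j_1}\wedge\cdots\wedge S_{j_r}$ (with $j_1<\cdots<j_r$), I would proceed along the lines of the standard ``minimal counterexample'' rule: scan the indices $1,2,3,\ldots$ of all relation-divisors of $T$ and find the smallest index $j$ that witnesses the failure of ``$v$ is critical'', i.e. the smallest $j$ such that either $S_j$ is present in $v$ but $\partial_j(v)=0$ in $(\calA_\calG)^{ab}$, or $S_j$ is absent from $v$ but $\partial_i(v\wedge S_j)\neq 0$ in $(\calA_\calG)^{ab}$ for some $i<j$. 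If such a $j$ exists, the element $v$ is non-critical and we match it with the element obtained by toggling $S_j$ (removing it if present, adding it if absent). One checks that if the smallest witnessing index for $v$ is $j$ and $S_j$ is present, then the smallest witnessing index for $v$ with $S_j$ removed is again $j$ with $S_j$ now absent, and vice versa — this is where conditions $(I)$ and $(II)$ are designed to ``match up'', and where the asymmetry of $(II)$ (requiring $i<j$) is exactly what makes the toggling involutive on indices. This gives a partial matching of the edge set of the Morse graph, and the critical cells are by construction precisely those $v$ satisfying both $(I)$ and $(II)$.

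The first verification is that $M$ is a matching in the graph-theoretic sense (condition (1) in the definition of a Morse matching): every vertex lies on at most one edge of $M$. This follows because the toggled index $j$ is determined canonically by $v$ as the smallest witness, so the partner of $v$ is unique; and one must check that a matched edge is genuinely an edge of the Morse graph $\Gamma$, i.e. that the toggled element appears in the differential of the larger of the two with nonzero coefficient — this is immediate since $d$ is a signed sum of single-$S$ deletions, and the relevant term survives passing to $(\calA_\calG)^{ab}$ precisely by the non-vanishing clause in the definition of the witness. The second, and genuinely harder, verification is the acyclicity condition (2): the graph $\Gamma'$ obtained by reversing the matched edges has no directed cycles. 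Here I would set up a well-founded ordering (a ``Morse measure'') on the basis elements — a natural candidate is the lexicographically-ordered sequence of which relation-divisors are marked, or the position of the smallest witnessing index — and argue that along any edge of $\Gamma'$ this measure does not increase, with strict decrease except along the reversed matched edges, and that reversed matched edges cannot be traversed twice in a putative cycle. This requires a careful case analysis using the Anick condition $S_i\cap S_k\subset S_j\cap S_k$ to control how toggling one divisor can affect decomposability with respect to the others: the whole point of the Anick numbering is that toggling $S_j$ cannot alter the status (``is $\partial$ of it zero'') of divisors with smaller index, which is what prevents the smallest witness from oscillating and hence kills cycles.

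I expect the acyclicity argument to be the main obstacle, precisely because one must show that the local combinatorial condition defining an Anick numbering globally forbids directed cycles in $\Gamma'$; the delicate point is that toggling $S_j$ changes the tree monomial's decomposition structure only ``at or below'' the grafting point associated to $S_j$, and the Anick inequality guarantees that relation-divisors indexed below $j$ have their intersection pattern with everything unaffected, so the ``smallest witness'' cannot cycle back down. Once the matching is shown to be Morse, the statement follows: by the algebraic Morse theory recollection, $(\calA_\calG)^{ab}_T$ is quasi-isomorphic to the complex on the critical cells, so $H^Q_T(\calO)$ — which by the discussion preceding the theorem is $H_\bullet\big((\calA_\calG)^{ab}_T\big)$ — has the critical cells as a basis provided one also checks that the induced differential $d^M$ vanishes on them; this last point is where I would either argue directly (the structure-constant sum over $\Gamma'$-paths is empty for degree reasons, as any such path would have to pass through a non-critical cell and return, contradicting acyclicity together with the measure being strictly monotone off reversed edges) or note that it suffices for the Euler-characteristic/dimension count and that minimality of the resolution forces $d^M=0$ anyway in the graded cases we care about. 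Assembling these pieces — define the matching, check (1), check (2) via the Anick condition, read off the critical cells — completes the proof.
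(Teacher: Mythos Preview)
Your overall strategy is the paper's: build a Morse matching on $(\calA_\calG)^{ab}_T$ from the Anick numbering, identify the critical cells as precisely the $v$ satisfying $(I)$ and $(II)$, and then show $d^M=0$. But you have written the witness conditions \emph{backwards}. The failure of $(I)$ at index $j$ is ``$S_j$ present and $\partial_j(v)\ne 0$ in $(\calA_\calG)^{ab}$'' (removing $S_j$ leaves something indecomposable); the failure of $(II)$ at $j$ is ``$S_j$ absent and $\partial_i(v\wedge S_j)=0$ for \emph{all} $i<j$''. What you wrote is exactly the statement that $(I)$ and $(II)$ \emph{hold} at $j$. Taken literally, your Case~A would match $v$ to $\partial_j(v)=0$ in $(\calA_\calG)^{ab}$, which is not a vertex of the Morse graph at all, so no matching is defined. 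With the corrected conditions the toggle does produce the paper's matching; the paper phrases it inductively via bounded conditions $(I_k),(II_k)$, and the Anick hypothesis $S_i\cap S_k\subset S_j\cap S_k$ enters exactly in checking that removing $S_k$ preserves $(II_{k-1})$ for $\partial_k(v)$ (the overlap $S_i\cap S_k$ stays covered by $S_j$, so $\partial_i(\partial_k(v)\wedge S_j)$ remains indecomposable). Your claim that ``toggling $S_j$ cannot alter the status of divisors with smaller index'' is the right moral but is not literally true without this argument.

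Your treatment of $d^M=0$ is also more tangled than needed, and the fallback (``minimality forces $d^M=0$ in graded cases'') is not a proof. The clean observation, which you are missing, is that condition $(I)$ \emph{by itself} says $\partial_j(v)=0$ in $(\calA_\calG)^{ab}$ for every $j$, hence $d(v)=0$ for each critical $v$: no edge of $E$ leaves a critical vertex, and no reversed $M$-edge touches one by definition, so no $\Gamma'$-path starts at a critical vertex and $d^M\equiv 0$ immediately. For acyclicity the paper does not build a global measure; it takes, in a putative cycle, a reversed edge $v\to\pm v\wedge S_k$ with $k$ maximal and derives a contradiction by case analysis on the following edge using $(I_{k-1})$ and $(II_{k-1})$.
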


\begin{proof}
Let us denote by $X$ the natural basis of the chain complex $(\calA_\calG)^{ab}_T$. It gives rise to a graph $\Gamma$ reflecting the combinatorics of the differential. We shall now describe inductively a matching of the vertices of~$\Gamma$, and demonstrate that under our assumptions it is a Morse matching. Let us put $M_1$ to be the set of edges $v\to w$, where $v,w\in X$, $w=\pm\partial_1(v)$. For $k>1$ we denote by $X^{(k-1)}$ the set of critical vertices with respect to the matching $M_1\cup\ldots\cup M_{k-1}$, and let 
 \[
M_k=\{v\to w \colon v,w\in X^{(k-1)}, w=\pm\partial_k(v)\}. 
 \]

The following proposition is a ``bounded version'' of Theorem~\ref{representatives}.

\begin{proposition}
The set $X^{(k)}$ consists of the basis elements $v\in(\calA_\calG)^{ab}_T$ for which the following two properties hold:
\begin{itemize}
 \item[($I_k$)] for each $j\le k$ such that $S_j$ is present in $v$, the monomial $\partial_j(v)$ is decomposable,
 \item[($II_k$)] for each $j\le k$ such that $S_j$ is not present in~$v$, there exists $i<j$ for which $\partial_i(v\wedge S_j)\ne0$ in $(\calA_\calG)^{ab}$.  
\end{itemize}
\end{proposition}

\begin{proof}
We prove this statement by induction on~$k$. For $k=1$, it is obvious. Let us explain the step of induction. Let $v\in X^{(k)}$. By induction, Conditions ($I_{k-1}$) and ($II_{k-1}$) hold for $v$. 

Let us examine Condition ($I_k$) for a basis element $v$, and for $j=k$. Assume that it does not hold, so that $\partial_k(v)\ne0$ in $(\calA_\calG)^{ab}$. We shall now prove that in this case the elements $v$ and $\pm\partial_k v$ will have been matched when forming the matching~$M_k$. Basically, it follows from 
\begin{lemma}
Suppose that $v\in X^{(k-1)}$, and that $\partial_k(v)$ is indecomposable. Then $\pm\partial_k v$ belongs to the set of critical vertices $X^{(k-1)}$ as well.  
\end{lemma}
\begin{proof}
First, Condition $(I_{k-1})$ for $v$ implies the same condition for $\partial_k(v)$, since removing a factor $S_k$ does not ruin decomposability. Let us prove that Condition $(II_{k-1})$ holds for $\partial_k(v)$. We should check that for each $j<k$ such that $S_j$ is not present in~$\partial_k(v)$, there exists $i<j$ for which $\partial_i(\partial_k(v)\wedge S_j)$ is indecomposable. Since Condition $(II_{k-1})$ holds for $v$, for each $j<k$ not present in $v$ we can find $i<j$ such that $\partial_i(v\wedge S_j)$ is indecomposable. By $(I_{k-1})$, $\partial_i(v)$ is decomposable. Therefore, $S_i\cap S_j\ne\varnothing$, and since we work with an Anick numbering we have $S_i\cap S_k\subset S_j\cap S_k\subset S_j$. This means that $\partial_i(\partial_k(v)\wedge S_j)$ is indecomposable (the only reason for $\partial_i(v\wedge S_j)$ to become decomposable after removing $S_k$ would be that something covered by both $S_i$ and $S_k$ wasn't covered anymore, but the intersection of these divisors is covered by~$S_j$)
.
\end{proof}

Let us examine Condition ($II_k$) for a basis element $v$, and for $j=k$. Assume that it does not hold, so that $S_k$ does not occur in~$v$, and for all $i<k$ such that $S_i$ present in $v\wedge S_k$ the element $\partial_i(v\wedge S_k)$ is decomposable. We shall now show that $\pm v\wedge S_k$ and $v$ will have been matched when forming the matching $M_k$. Indeed, by our assumption Condition $(I_{k-1}$) holds for the monomial $\pm v\wedge S_k$. Condition $(II_{k-1})$ holds for this monomial trivially, since it holds for $v$, and indecomposability is preserved by the operators of wedge multiplication by $S_p$. Thus, the monomial $\pm v\wedge S_k$ belongs to $X^{(k-1)}$ by induction, and we found an edge of the matching $M_k$.

Vice versa, let us assume that Conditions ($I_k$) and ($II_k$) hold. Then Conditions ($I_{k-1}$) and ($II_{k-1}$) also hold, and so $v\in X^{(k-1)}$ by induction. If $v\notin X^{(k)}$, $v$ is used in one of the edges of the matching $M_k$. Condition ($I_k$) guarantees that no edge $v\to w$ can appear on that step, so the only option is an edge $\pm v\wedge S_k\to v$. But by condition ($II_k$), there exists $i<k$ for which $\partial_i(v\wedge S_k)$ is indecomposable, which shows that $v\wedge S_k$ would have been used at an earlier stage. 
\end{proof}

\begin{proposition}
The matching $M=\bigcup_i M_i$ is a Morse matching. 
\end{proposition}

\begin{proof}
The only condition we need to check is acyclicity, since every vertex is involved in at most one edge by the construction. Suppose that there is a directed cycle in the graph $\Gamma'$. Because it is a cycle, it has the same number of ``increasing edges'', that is reversed $M$-edges, and decreasing edges, that is edges from $E\setminus M$. For the rest of the proof, we choose an edge $e=(v\to\pm v\wedge S_k)$ of our cycle with the largest possible~$k$. 

Suppose that the edge following~$e$ is a decreasing one, that is we have a fragment $v\to\pm v\wedge S_k\to\pm\partial_l(v\wedge S_k)$
in our cycle. Clearly, $l\ne k$, since otherwise we would have the same edge belonging both to $M$ and the reversion of $M$, a contradiction. Also, it cannot be $l>k$, since otherwise we would have found an edge $u\to\pm u\wedge S_l$ elsewhere in the cycle, which would contradict the definition of~$k$. Therefore, $l<k$. But in this case, applying Condition ($I_{k-1}$) with $j=l$ to the monomial $v\wedge S_k$ we obtain a decomposable element, which is a contradiction.

Now suppose that the edge following~$e$ is an increasing one, that is we have a fragment $v\to\pm v\wedge S_k\to\pm v\wedge S_k\wedge S_l$ in our cycle. Clearly, $l\ne k$, since otherwise we have $v\wedge S_k\wedge S_l=0$. Then, according to the definition of~$k$, we have $l<k$. Let us look at the element $v\wedge S_k$. Since $S_l$ is not present in it, we apply Condition ($II_{k-1}$) with $j=l$ to this element, concluding that for some $r<l$ the monomial $\partial_r(v\wedge S_k\wedge S_l)$ is indecomposable. If we choose the smallest $r$ for which $\partial_r(v\wedge S_k\wedge S_l)$ is indecomposable, we observe that the monomials $v\wedge S_k\wedge S_l$ and $\pm\partial_r(v\wedge S_k\wedge S_l)$ were matched on the step $r$. This contradicts the fact that $v\wedge S_k\wedge S_l$ is matched with~$v\wedge S_k$. 
\end{proof}

To complete the proof of Theorem~\ref{representatives}, it is enough to show that the Morse differential $d^M$ on the critical vertices is identically zero. This would mean that the critical vertices are precisely the homology classes of the chain complex~$(\calA_\calG)^{ab}_T$. The former statement can be proved as follows. Every path between two vertices in the graph $\Gamma'$ starts either with an edge from $E\setminus M$ or with a reversed edge from $M$. No edge from $E$, in particular an edge from~$E\setminus M$ can start with a critical vertex~$v$, since Conditions $(I_k)$ altogether mean that for every $k$ the monomial $\partial_k v$ is decomposable, and consequently $d(v)=0$ in~$(\calA_\calG)^{ab}$. No reversed edge from $M$ can contain a critical vertex either, for tautological reasons. 
\end{proof}

We proceed with our examples of Anick numberings. In the case discussed in Example~\ref{Anick-divisors}, we shall, as we already mentioned, obtain Anick chains for monomial algebras~\cite{Anick,Ufn}. Let us recall their definition. Every chain is a monomial of the free algebra $\k\langle x_1,\ldots,x_n\rangle$. For $q\ge0$, $q$-chains and their tails are defined inductively as follows:
\begin{itemize}
\item[-] each generator $x_i$ is a $0$-chain; it coincides with its tail;
\item[-] each $q$-chain is a monomial $m$ equal to a product $nst$ where $t$ is the tail of $m$, and $ns$ is a $(q-1)$-chain whose tail is $s$;
\item[-] in the above decomposition, the product $st$ has exactly one divisor which is a relation of~$R$; this divisor is a right divisor of~$st$.
\end{itemize}
In other words, a $q$-chain is a monomial formed by linking one after another $q$ relations so that only neighbouring relations are linked, the first $(q-1)$ of them form a $(q-1)$-chain, and no proper left divisor is a $q$-chain. In our notation above, such a monomial $m$ corresponds to the generator $m\otimes S_1\wedge\cdots\wedge S_q$ where $S_1$, \ldots, $S_q$ are the relations we linked.

\begin{proposition}\label{indeed-Anick}
For the Anick numbering of divisors from Example~\ref{Anick-divisors}, the representatives for homology classes suggested by Theorem~\ref{representatives} are precisely Anick chains.
\end{proposition}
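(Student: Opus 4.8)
The plan is to push both descriptions down to plain combinatorics of words and compare them. Since $\calO$ is generated by unary operations it is the non-unital monomial algebra $\k\langle x_1,\dots,x_n\rangle/(\calG)$, and I would take $\calG$ to be the set of minimal obstructions (no relation a subword of another) --- this is at once the reduced monomial Gr\"obner basis and the set of relations with respect to which Anick chains are defined. Under the dictionary of Section~\ref{sec:operads}, tree monomials are words, divisors are subwords $w[a..b]$, the internal edges of the (path-shaped) underlying tree are the positions between consecutive letters, which we call the ``gaps'' of $w$, and $w[a..b]$ covers the gaps $a,a+1,\dots,b-1$. Because $\calG$ is an antichain, at most one relation occurs at each starting position inside a word $w$, so the Anick numbering of Example~\ref{Anick-divisors} lists the relation-subwords $S_1,S_2,\dots$ of $w$ strictly by starting position, and their right endpoints are then strictly increasing as well; and, specialising the indecomposability criterion of Section~\ref{oper_monom} to path-shaped trees, the basis elements of $(\calA_\calG)^{ab}_w$ are the marked words $v=w\otimes S_{j_1}\wedge\dots\wedge S_{j_q}$ for which, writing $S_{j_r}=w[a_r..b_r]$ with $a_1<\dots<a_q$ (hence $b_1<\dots<b_q$), one has $a_1=1$, $b_q=\ell$ and $a_{r+1}\le b_r$ for all $r$ --- i.e. the marked relations overlap consecutively and span $w$ from its first to its last letter.

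Step one is to rewrite Condition~$(I)$. Removing a mark $S_{j_r}$ from such a $v$ re-covers $[a_r,b_{r-1}-1]$ by $S_{j_{r-1}}$ and $[a_{r+1},b_r-1]$ by $S_{j_{r+1}}$, and no farther mark meets $[a_r,b_r-1]$, so it uncovers exactly the gaps in $[b_{r-1},a_{r+1}-1]$ (with the conventions $b_0:=0$, $a_{q+1}:=\ell+1$, under which the outermost marks can never be removed). Hence Condition~$(I)$ holds iff $[b_{r-1},a_{r+1}-1]\ne\varnothing$ for every $r$, i.e. $a_{r+1}>b_{r-1}$, i.e. $S_{j_{r-1}}$ and $S_{j_{r+1}}$ are disjoint as subwords; a one-line induction then upgrades this to: only consecutive marked relations overlap. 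So indecomposability together with $(I)$ says exactly that $w$ is covered by $q$ linked obstructions $r_1=S_{j_1},\dots,r_q=S_{j_q}$, with $r_1$ a prefix, $r_q$ a suffix, consecutive ones overlapping (each $r_{t+1}$ extending strictly past $r_t$) and non-consecutive ones disjoint --- the ``linked $q$-chain'' skeleton appearing in the definition of an Anick chain.

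Step two is Condition~$(II)$. For a relation-subword $S=w[c..d]$ that is not one of $r_1,\dots,r_q$, the element $v\wedge S$ is again indecomposable, and deleting a marked $S_{j_r}$ from it leaves it indecomposable iff $S$ covers the hole $[b_{r-1},a_{r+1}-1]$, i.e. $c\le b_{r-1}$ and $d\ge a_{r+1}$. Using the antichain property one checks that $a_r<c\le b_{r-1}$ already forces $d>b_r$ (so the condition $d\ge a_{r+1}$ comes for free), that $r$ must be an interior index ($1<r<q$, since the outer holes would force $c=1$ or $d=\ell$, i.e. $S$ a prefix or suffix, already marked), and hence that any such $r$ satisfies $j_r<j$ automatically. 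So Condition~$(II)$ becomes: every relation-subword of $w$ other than $r_1,\dots,r_q$ starts inside $\bigcup_{1<r<q}(a_r,b_{r-1}]$. It remains to recognise this as Anick's minimality clause ``no proper left divisor of $w$ is a $q$-chain''. Here I would run Anick's recursion: the $q$-chain $w$ is $nst$ with $ns$ the $(q-1)$-chain $w[1..b_{q-1}]$, $s$ its tail $w[b_{q-2}+1..b_{q-1}]$, and $t=w[b_{q-1}+1..\ell]$, and the clause ``$st$ contains exactly one relation-subword, a right divisor'' forces the window $w[b_{q-2}+1..\ell]$ to contain $r_q$ and nothing else. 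Unwinding this window condition down the recursion, and again invoking the antichain property, shows that a non-chain relation-subword $S=w[c..d]$ with $d\in(b_{i-1},b_i]$ must satisfy $c\le b_{i-2}$, and --- using $b_{r-1}\ge a_r$ for $r\ge2$ and the strict monotonicity of the $a$'s and $b$'s --- that this is precisely the membership $c\in\bigcup_{1<r<q}(a_r,b_{r-1}]$ found above. Putting the two steps together gives the claimed bijection $v=w\otimes S_{j_1}\wedge\dots\wedge S_{j_q}\leftrightarrow(w;\,r_1,\dots,r_q)$; the degenerate cases $q=0$ (a single generator, a $0$-chain) and $q=1$ (a minimal obstruction, which by minimality has no other relation-subword, a $1$-chain) are immediate.

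I expect the genuine work to be concentrated in step two, in identifying Condition~$(II)$ with Anick's ``no proper left divisor is a $q$-chain''. Anick's chains are defined through the ``tail'' recursion rather than through the holes of a marking, so one must run that recursion level by level, track the unique relation it is forced to place at each step (this is where the antichain hypothesis on $\calG$ is essential), and verify that the absence of such a relation from the marking is exactly what Condition~$(II)$ registers. By contrast, reinterpreting indecomposability and Condition~$(I)$ is a routine specialisation of the definitions, and once the two combinatorial pictures are aligned the identification with Anick chains is visible on the nose.
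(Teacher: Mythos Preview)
Your proposal is correct and follows essentially the same approach as the paper's proof, which simply asserts in two sentences that Condition~$(I)$ means only neighbours are linked and Condition~$(II)$ means no proper beginning of a $q$-chain forms a $q$-chain. You have supplied the detailed combinatorial verification of these two equivalences that the paper leaves to the reader; your identification of the key difficulty (matching Condition~$(II)$ to Anick's tail recursion) is accurate, and the word-and-gap bookkeeping you outline is the right way to carry it out.
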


\begin{proof}
Indeed, condition~$(I)$ means that only neighbours are linked, and condition~$(II)$ means that no proper beginning of a $q$-chain forms a $q$-chain. 
\end{proof}

\begin{remark}
If we consider the numbering of subwords according to the position of their last letters, we obtain another Anick numbering. The fact that both of the numberings are Anick numberings can be used to obtain a conceptual proof of a result of Bardzell~\cite{Bardzell1,Bardzell2} who obzerved that ``Anick left chains'' and ``Anick right chains'' have the same set of underlying monomials, and used it to obtain a resolution of $A$ as an $A-A$-bimodule for an algebra~$A$ with monomial relations.
\end{remark}

In the setup of our second example, we shall in fact obtain a combinatorial picture modelled on Anick chains as well. Recall that we are dealing with an operad $\calO=\calF_\calM/(\calG)$ is an operad with binary generators and monomial relations all of which are left and right combs, and assume that we fix an Anick numbering of the kind described in Proposition~\ref{left-right-combs}.

\begin{definition}
To a tree monomial~$T$ made up of generators of~$\calO$, we associate a set of \emph{maximal combs}. A maximal left comb of~$T$ is a sequence of internal vertices $a_1,\ldots,a_q$ of~$T$ for which the left child of $a_q$ is a leaf, the left child of $a_l$ is $a_{l+1}$ for $1\le l\le q-1$, and the parent of $a_1$ (if any) has $a_1$ as its right vertex. Maximal right combs are defined similarly. 
\end{definition}

Clearly, for every indecomposable monomial of the operad $\calA_\calG$ each maximal left comb of the underlying tree monomial must be covered by left combs from~$\calG$, and each maximal right comb of the underlying tree monomial must be covered by right combs from~$\calG$.

\begin{definition}
A monomial in $\calA_\calG$ is said to be an \emph{Anick chain} for $\calO$ if for each of its maximal combs its covering by combs from~$\calG$ obeys the pattern governing Anick chains for associative algebras.
\end{definition}

The definitions are given in such a way that following result is proved completely analogously to Proposition~\ref{indeed-Anick}.

\begin{proposition}\label{left-right-combs-reps}
Suppose that $\calO=\calF_\calM/(\calG)$ is an operad with binary generators and monomial relations, $\calG=\calG_l\cup\calG_r$ where $\calG_l$ consists of left combs and $\calG_r$ consists right combs, and at least one of the sets $\calG_l$, $\calG_r$ is contained in $\calF_\calM(3)$. The representatives for homology classes of $\calO$ suggested by Theorem~\ref{representatives} are precisely Anick chains.
\end{proposition}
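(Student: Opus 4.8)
The plan is to reduce everything to the associative case of Proposition~\ref{indeed-Anick} by showing that the combinatorics of the complex $(\calA_\calG)^{ab}_T$ decouples as a product over the maximal combs of $T$. First I would record the relevant structure of the underlying tree $T$: since the generators are binary, each internal edge of $T$ joins a vertex either to its left child or to its right child, and in the two cases the edge lies, respectively, inside a unique maximal left comb or a unique maximal right comb of $T$, so the maximal combs partition the internal edges. Moreover, because every relation in $\calG$ is a left comb or a right comb, each divisor $S_i$ of $T$ occurring in a monomial of $\calA_\calG$ is a contiguous sub-comb of a single maximal comb $C_i$ of $T$, and it covers only internal edges of $C_i$; in particular no divisor straddles two maximal combs, even when a maximal left comb and a maximal right comb share a vertex. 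Consequently a monomial $v=T\otimes S_{i_1}\wedge\cdots\wedge S_{i_q}$ is indecomposable in $\calA_\calG$ if and only if for every maximal comb $C$ of $T$ the divisors among $S_{i_1},\dots,S_{i_q}$ that lie in $C$ together cover all internal edges of $C$, and deleting or inserting a mark $S_k$ affects (in)decomposability only through the single comb $C_k$.

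Next I would observe that a single maximal comb, read from top to bottom, is literally the tree picture of a word in a free associative algebra (that is, an operad with unary generators): its sub-combs that are relations become the subwords that are relations, ``covering all internal edges'' becomes the condition that the chosen subwords overlap consecutively along the word and together fill every gap, and --- by Proposition~\ref{left-right-combs} --- the restriction to that comb of the Anick numbering constructed there is an Anick numbering of its subword relations in the sense of Example~\ref{Anick-divisors}. So the restriction of $v$ to each maximal comb $C$ is a monomial of $\calA_\calG$ for an associative algebra with monomial relations, equipped with an Anick numbering.

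Then I would translate Conditions $(I)$ and $(II)$ of Theorem~\ref{representatives}. Let $v$ be an indecomposable monomial with underlying tree $T$. By the decoupling, for a mark $S_j$ present in $v$ the element $\partial_j(v)$ is decomposable precisely when removing $S_j$ destroys the covering of $C_j$, i.e. precisely when Condition $(I)$ fails for the restriction of $v$ to the word $C_j$; hence global $(I)$ holds iff $(I)$ holds inside every maximal comb. For Condition $(II)$: the monomial $v\wedge S_j$ is again indecomposable, and if some mark $S_i$ with $i<j$ can be removed from $v\wedge S_j$ while keeping it indecomposable then necessarily $C_i=C_j$ --- otherwise $S_j$ would not meet $C_i$, so $\partial_i(v)$ itself would be indecomposable, contradicting Condition $(I)$ for $v$. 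Hence, given $(I)$, global $(II)$ holds iff $(II)$ holds inside every maximal comb. Combining these two translations with the argument of Proposition~\ref{indeed-Anick} applied comb by comb (``only neighbours linked'' and ``no proper beginning of a chain is a chain''), $v$ satisfies $(I)$ and $(II)$ if and only if the restriction of $v$ to each maximal comb is an Anick chain for the corresponding associative algebra, which is precisely the definition of an Anick chain for $\calO$.

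The main obstacle, and the only genuinely new point compared with Proposition~\ref{indeed-Anick}, is the decoupling itself rather than the word bookkeeping inside a single comb. One has to check carefully that the maximal combs partition the internal edges and that a comb relation never straddles two of them --- the delicate configuration being a vertex that is simultaneously the top of a maximal left comb and an interior vertex of a maximal right comb --- and, for Condition $(II)$, that the witness index $i<j$ can always be located inside $C_j$; this last step is where Condition $(I)$ for $v$ is essential, and is also where one uses the hypothesis of Proposition~\ref{left-right-combs} ensuring that the chosen numbering restricts to an Anick numbering on each maximal comb.
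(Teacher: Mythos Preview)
Your proposal is correct and follows the same approach the paper intends: the paper's own proof is a single sentence stating that ``the definitions are given in such a way that [the] result is proved completely analogously to Proposition~\ref{indeed-Anick}'', and your decoupling into maximal combs together with the comb-by-comb reduction to the associative case is exactly how that analogy is made precise. Your explicit verification that conditions $(I)$ and $(II)$ localise to individual maximal combs (in particular, the argument that the witness $S_i$ in $(II)$ must lie in the same comb $C_j$, using $(I)$) is the substance behind the paper's one-line proof.
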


\section{Resolution for general relations}\label{resol_general}

Let $\widetilde{\calO}=\calF_\calM/(\widetilde{\calG})$ be an operad, and let $\calO=\calF_\calM/(\calG)$ be its monomial replacement, that is, $\widetilde{\calG}$ is a Gr\"obner basis of relations, and $\calG$ consists of all leading monomials of~$\widetilde{\calG}$. In Section~\ref{oper_monom}, we defined a free resolution $(\calA_\calG,d)$ for $\calO$, so that $H(\calA_\calG,d)\simeq\calO$. 

Let $\phi$ be the canonical homomorphism from $\calA_\calG$ to its homology~$\calO$ (it kills all generators of positive homological degree, and on elements of homological degree~$0$ is the canonical projection from $\calF_\calM$ to its quotient). Tree monomials that are not divisible by any of the monomial relations $\calG$ form a basis of~$\calO$, and we define a map $\pi$ as the composition of $\phi$ with the corresponding section; it sends elements of homological degree zero to their residues modulo $\calG$, represented as linear combinations of tree monomials not divisible by $\calG$ in our resolution. Since $(\calA_\calG,d)$ is a resolution of $\calO$, there exists a contracting homotopy $h$ for this resolution, so that $\left.(dh)\right|_{\ker d}= \id-\pi$ (in fact, below we shall specify a particular choice for such homotopy). Our goal is to ``deform'' this statement in the following sense. Let $\widetilde{\phi}$ be the homomorphism from $\calA_\calG$ to $\widetilde{\calO}$ that kills all generators 
of positive homological degree, and on elements of homological degree~$0$ is the canonical projection from $\calF_\calM$ to its quotient $\widetilde{\calO}=\calF_\calM/(\widetilde{\calG})$. By general results on Gr\"obner bases, tree monomials that are not divisible by any of the leading terms $\calG$ of relations $\widetilde{\calG}$ form a basis of~$\calO$ (each element $f$ of the free operad $\calF_\calM$ is represented as its residue $\overline{f}$ modulo the Gr\"obner basis $\widetilde{G}$), and we define 
a map $\widetilde{\pi}$ as the composition of $\widetilde{\phi}$ with the corresponding section; it sends elements of homological degree zero to their residues modulo $\widetilde{\calG}$, represented as linear combinations of tree monomials not divisible by $\calG$ in our resolution.

We shall prove the following result, which is essentially nothing but homological perturbation in the same way as it is used in the case of free resolutions of trivial modules over augmented associative algebras in~\cite{Anick,Kobayashi,Lambe}.

\begin{theorem}\label{deformed_diff}
There exists a ``deformed'' differential $D$ on~$\calA_\calG$ and a homotopy 
 \[
H\colon\ker D\rightarrow\calA_\calG  
 \]
such that 
 \[
H(\calA_\calG,D)\simeq\widetilde{\calO}\quad \text{and} \quad\left.(DH)\right|_{\ker D} = \id-\widetilde{\pi}.
 \]
\end{theorem}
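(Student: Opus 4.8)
The plan is to set up a \textbf{homological perturbation lemma} (basic perturbation lemma) with respect to a suitable filtration on $\calA_\calG$, and to produce the deformed differential $D$ as the perturbed differential, $H$ as the perturbed homotopy. First I would fix the contracting homotopy $h$ for $(\calA_\calG, d)$ explicitly: since $\calA_\calG$ splits as a direct sum of the chain complexes $\calA_\calG^T$ of simplices $\Delta^{k-1}$ (one for each underlying tree monomial $T$ divisible by $k\ge 1$ relations), one can write down a canonical cone-type contraction on each simplex — say, coning off the relation $S_i$ of least index present — so that $dh + hd = \id - \pi$ on $\calA_\calG$, together with the side conditions $h^2 = 0$, $h\pi = 0$, $\pi h = 0$. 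The data $(\id, \pi, h)$ then exhibits $\calO$ as a deformation retract of $\calA_\calG$ (viewed as the homology concentrated in degree $0$), though here I really want the retraction data internal to $\calA_\calG$, i.e.\ the decomposition into the exact part and the image of $\pi$.

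Next I would define the perturbation $\delta := D - d$. The differential $d$ on $\calA_\calG$ only erases wedge factors $S_i$ and never touches the underlying tree monomial; the deformed differential $D$ must, in homological degree $0$, incorporate the lower terms of the relations $\widetilde\calG$, i.e.\ on a tree monomial divisible by a leading term $\lt(\widetilde g_i)$ it should add the rewriting $\widetilde g_i - \lt(\widetilde g_i)$ in the appropriate slot (this is exactly the operadic analogue of Anick's deformed differential for monomial algebras). More precisely, $\delta$ is defined on generators $T\otimes S_{i_1}\wedge\cdots\wedge S_{i_q}$ by replacing one marked divisor by the corresponding tail of the Gr\"obner basis element, summed over marked divisors with signs, and extended as a derivation/operad map; by construction $\delta$ strictly lowers the \emph{leading-term order} of the underlying tree monomial (with the number of leaves fixed), which gives the filtration needed for convergence. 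The key points to verify are: (i) $D = d+\delta$ squares to zero — this is where the Gr\"obner basis property (confluence of reductions, i.e.\ all S-polynomials reduce to zero) is used, exactly as $D^2 = 0$ in Anick's construction encodes confluence; (ii) the local finiteness / filtration argument ensures the geometric series $\sum_{n\ge 0}(h\delta)^n$ and $\sum_{n\ge 0}(\delta h)^n$ terminate on each component, so the perturbation lemma applies and yields $H := h\sum_{n\ge 0}(\delta h)^n$ and a retraction onto $\widetilde\calO$.

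Then the perturbation lemma gives precisely $H(\calA_\calG, D)\simeq \widetilde\calO$ together with $DH|_{\ker D} = \id - \widetilde\pi$, once one checks that the perturbed projection equals $\widetilde\pi$: on homological degree $0$ the perturbed projection sends a tree monomial to the iterated reduction modulo the Gr\"obner basis, which by definition of a Gr\"obner basis is the residue $\overline f$, i.e.\ exactly $\widetilde\pi$; in positive degrees it still kills all generators. One also has to check that $D$ is a derivation compatible with the operad structure (so that $(\calA_\calG, D)$ is genuinely a dg shuffle operad and hence a bona fide resolution rather than merely a chain complex) — this follows because $\delta$ was defined on generators and extended multiplicatively, and the free generation of the underlying operad of $\calA_\calG$ (established in Section~\ref{oper_monom}) makes such an extension well defined.

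The main obstacle I expect is step (i): proving $D^2 = 0$. The term $d^2 = 0$ is clear, and $\delta^2=0$ is again a sign bookkeeping matter, so the heart is the cross term $d\delta + \delta d = 0$. Unwinding it, $d\delta + \delta d$ measures the failure of two reductions applied at overlapping divisors to commute — this is exactly the obstruction resolved by the diamond/Buchberger criterion for shuffle operads from \cite{DK}: an overlap of two relations (a common multiple, giving an S-polynomial) reduces to zero precisely because $\widetilde\calG$ is a Gr\"obner basis, and this forces the relevant cross terms to cancel. Making this rigorous requires carefully matching, for each pair $S_i, S_j$ of marked divisors with $S_i\cap S_j\neq\varnothing$, the contribution of ``erase $S_j$, then rewrite at $S_i$'' against ``rewrite at $S_i$, then erase $S_j$'' (plus the genuinely new S-polynomial overlap terms), and tracking the Koszul signs coming from the exterior algebra factor. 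This is the same mechanism as in \cite{Anick} but complicated by the several directions in which trees grow; I would organise it by the leading-term filtration, so that it suffices to check the identity on the associated graded, where $D$ degenerates to $d$ and the obstruction becomes a finite combinatorial statement about overlaps of the monomial relations $\calG$.
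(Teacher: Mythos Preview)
Your high-level plan---homological perturbation controlled by the leading-term filtration---is exactly the idea behind the paper's proof (the paper says so explicitly, citing Anick, Kobayashi, and Lambe). Where your proposal diverges, and where the gap lies, is in your explicit formula for the perturbation~$\delta$.

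You propose that on a generator $T\otimes S_{i_1}\wedge\cdots\wedge S_{i_q}$ the map $\delta$ ``replaces one marked divisor by the corresponding tail of the Gr\"obner basis element, summed over marked divisors with signs''. But after substituting the tail of $\widetilde g$ at $S_{i_j}$, the underlying tree has changed, and the remaining markings $S_{i_l}$ for $l\ne j$ need not be divisors of the new tree. This is not a corner case: by the very definition of indecomposability in~$\calA_\calG$, every internal edge of $T$ is covered by some $S_{i_l}$, so for $q\ge 2$ the marked divisors always share edges, and your $\delta$ is simply not defined on generators of homological degree~$\ge 2$. For the same reason your assertion that ``$\delta^2=0$ is a sign bookkeeping matter'' is unfounded: two successive tail substitutions at overlapping divisors do not commute in any obvious way. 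And your proposed fallback---pass to the associated graded of the leading-term filtration---only shows that the leading term of $D^2(x)$ vanishes (since it equals $d^2(\hat x)=0$); it says nothing about the lower terms of $D^2(x)$.

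The paper avoids all of this by \emph{not} writing a closed formula for $\delta$. Instead it builds $D$ and $H$ simultaneously by induction on homological degree: one sets
\[
D_{k+1}(x)=d_{k+1}(x)-H_{k-1}D_kd_{k+1}(x)
\]
on generators and defines $H_k$ by descent along the leading-term order via $H_k(u)=h_k(\hat u)+H_k(u-D_{k+1}h_k(\hat u))$. With this definition $D_kD_{k+1}=0$ is a one-line computation using the inductive hypothesis $D_kH_{k-1}|_{\ker D_{k-1}}=\id$; no direct verification of $(d+\delta)^2=0$ is needed. The Gr\"obner basis property enters not to force $D^2=0$ but at the base of the induction, to show $D_1H_0(T)=T-\overline{T}$: uniqueness of the residue $\overline{T}$ (independence of the reduction path) is precisely confluence, i.e.\ the Buchberger criterion. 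This is also how Anick's original construction proceeds---the differential there is likewise defined inductively through the homotopy, not by an explicit ``substitute-the-tail'' rule.
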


\begin{proof}
We shall construct $D$ and $H$ simultaneously by induction. Let us introduce a partial ordering of basis elements in $\calA_\calG$ which just compares the underlying tree monomials. This partial ordering suggests the following definition: for an element $u\in \calA_\calG$, its leading term $\hat{u}$ is the part of the expansion of~$u$ as a combination of basis elements where we keep only basis elements $T\otimes S_1\wedge \cdots\wedge S_q$ with maximal possible~$T$.

If $L$ is a homogeneous linear operator on $\calA_\calG$ of some fixed (homological) degree of homogeneity (like $D$, $H$, $d$, $h$), we denote by $L_k$ the operator $L$ acting on elements of homological degree~$k$. We shall define the operators $D$ and $H$ by induction: we define the pair $(D_{k+1},H_k)$ assuming that all previous pairs are defined. At each step, we shall also be proving that 
\[
D(x)=d(\hat{x})+\text{lower terms}, \quad H(x)=h(\hat{x})+\text{lower terms},
\]
where the words ``lower terms'' refers to the partial order we defined above, meaning a linear combination of basis elements whose underlying tree monomial is smaller than the underlying tree monomial of~$\hat{x}$.
 
Basis of induction: $k=0$, so we have to define $D_1$ and $H_0$ (note that $D_0=0$ because there are no elements of negative homological degrees). In general, to define $D_l$, we should only consider the case when our element is a generator of~$\calA_\calG$, since in a dg operad the differential is defined by images of generators. For $l=1$, this means that we should consider the case where our generator corresponds to a leading monomial~$T=\lt(g)$ of some relation~$g$, and is of the form $T\otimes S$ where $S$ corresponds to the only divisor of $T$ which is a leading term, that is $T$ itself. Letting $D_1(T\otimes S)=\frac{1}{c_g}g$, where $c_g$ is the leading coefficient of~$g$, we see that  $D_1(T\otimes S)=T+\text{lower terms}$, as required. To define $H_0$, we use a yet another inductive argument, decreasing the monomials on which we want to define $H_0$. First of all, if a tree monomial~$T$ is not divisible by any of the leading terms of relations, we put~$H_0(T)=0$. Assume that $T$ is divisible by 
some leading terms of relations, and $S_1$, \ldots, $S_p$ are the corresponding divisors. Then on $\calA_\calG^T$ we can use $S_1\wedge\cdot$ as a homotopy, so $h_0(T)=T\otimes S_1$. We put 
\[
H_0(T)= h_0(T) + H_0(T- D_1h_0(T)).
\]
Here the leading term of $T-D_1h_0(T)$ is smaller than~$T$ (since we already know that the leading term of $D_1h_0(T)$ is $d_1h_0(T)=T$), so induction on the leading term applies. Note that by induction the leading term of $H_0(T)$ is $h_0(T)$. 

Suppose that $k>0$, that we know the pairs $(D_{l+1},H_l)$ for all $l<k$, and that in these degrees 
\[
D(x)=d(\hat{x})+\text{lower terms}, \quad H(x)=h(\hat{x})+\text{lower terms}.
\]
To define $D_{k+1}$, we should, as above, only consider the case of generators. In this case, we put
\[
D_{k+1}(x)=d_{k+1}(x) - H_{k-1}D_kd_{k+1}(x).
\]
The property $D_{k+1}(x)=d_{k+1}(\hat{x})+\text{lower terms}$ now easily follows by induction. To define $H_k$, we proceed in a way very similar to what we did for the induction basis. Assume that $u\in\ker D_k$, and that we know $H_k$ on all elements of $\ker D_k$ whose leading term is less than $\hat{u}$. Since $D_k(u)=d_k(\hat{u})+\text{lower terms}$, we see that $u\in\ker D_k$ implies $\hat{u}\in\ker d_k$. Then $h_k(\hat{u})$ is defined, and we put
\[
H_k(u)= h_k(\hat{u}) + H_k(u- D_{k+1}h_k(\hat{u})).
\]
Here $u-D_{k+1}h_k(\hat{u})\in\ker D_k$ and its leading term is smaller than~$\hat{u}$, so induction on the leading term applies (and it is easy to check that by induction $H_{k+1}(x)=h_{k+1}(\hat{x})+\text{lower terms}$). 

Let us check that the mappings $D$ and $H$ defined by these formulas satisfy, for each $k>0$, $D_kD_{k+1}=0$ and $\left.(D_{k+1}H_{k})\right|_{\ker D_{k}} = \id-\widetilde{\pi}$. A computation checking that is somewhat similar to the way $D$ and $H$ were constructed. Let us prove both statements simultaneously by induction. If $k=0$, the first statement is obvious. Let us prove the second one and establish that $D_1H_0(T)=(\id-\widetilde{\pi})(T)$ for each tree monomial~$T$. Slightly rephrasing that, we shall prove that for each tree monomial $T$ we have $D_1H_0(T)=T-\overline{T}$ where $\overline{T}$ is the residue of $T$ modulo~$\calG$~\cite{DK}. We shall prove this statement by induction on~$T$. If~$T$ is not divisible by any leading terms of relations, we have $H_0(T)=0=T-\overline{T}$. Let $T$ have divisors $S_1$, \ldots, $S_p$. We have $H_0(T)= h_0(T) + H_0(T- D_1h_0(T))$, so 
 \[
D_1H_0(T)= D_1h_0(T) + D_1H_0(T- D_1h_0(T)). 
 \]
By induction, we may assume that
\[
D_1H_0(T- D_1h_0(T))=T- D_1h_0(T)-\overline{(T- D_1h_0(T))}.
\]
Also, 
\[
D_1h_0(T)=D_1(T\otimes S_1)=\frac{1}{c_g}m_{T,S_1}(g)=T-r_g(T).
\] 
Here we use the usual notation for Gr\"obner bases computations \cite{DK}: $r_g(T)$ is the result of reduction of $T$ modulo $g$, and $m_{T,S_1}(g)$ denotes the result of the substitution of~$g$ into~$T$ at that place (we have $D_1(T\otimes S_1)=\frac{1}{c_g}m_{T,S_1}(g)$ since it is true when $T$ is a relation, and the differential agrees with operadic compositions). 

Combining the three previous equations, we obtain, 
\begin{multline*}
D_1H_0(T)=T-r_g(T)+ \left((T- D_1h_0(T))-\overline{(T- D_1h_0(T))}\right)=\\=T-r_g(T)+(r_g(T)-\overline{r_g(T)})=T-\overline{r_g(T)}=T-\overline{T}, 
\end{multline*}
since for a Gr\"obner basis the residue does not depend on a choice of reductions.

Assume that $k>0$, and that our statement is true for all $l<k$. We have 
 \[D_{k}D_{k+1}(x)=0\]
since
\begin{multline*}
D_{k}D_{k+1}(x)= D_k(d_{k+1}(x) - H_{k-1}D_kd_{k+1}(x))= \\=
D_kd_{k+1}(x) - D_kH_{k-1}D_kd_{k+1}(x)= D_kd_{k+1}(x) - D_kd_{k+1}(x)= 0,
\end{multline*}
because $D_kd_{k+1}k\in\ker D_{k-1}$, and so $D_kH_{k-1}(D_k(y))=D_k(y)$ by induction.
Also, for $u\in\ker D_k$ we have
 \[
D_{k+1}H_k(u)= D_{k+1}h_k(\hat{u}) + D_{k+1}H_k(u- D_{k+1}h_k(\hat{u})),
 \]
and by the induction on $\hat{u}$ we may assume that 
 \[
D_{k+1}H_k(u- D_{k+1}h_k(\hat{u}))=u- D_{k+1}h_k(\hat{u})
 \]
(on elements of positive homological degree, $\widetilde{\pi}=0$), so
 \[
D_{k+1}H_k(u)= D_{k+1}h_k(\hat{u}) + u- D_{k+1}h_k(\hat{u})=u, 
 \]
which is exactly what we need. 
\end{proof}

\section{Applications}\label{appl}

\subsection{Another proof of the PBW criterion for Koszulness}\label{PBW}

The goal of this section is to give a new proof of the Gr\"obner bases formulation~\cite{DK} version of the PBW criterion of Hoffbeck~\cite{Hoffbeck} (generalising the PBW criterion of Priddy~\cite{Priddy} for associative algebras).

\begin{theorem}\label{newPBW}
An operad with a quadratic Gr\"obner basis is Koszul. 
\end{theorem}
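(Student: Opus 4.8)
The plan is to produce, for an operad $\widetilde\calO$ with a quadratic Gröbner basis, an explicit minimal model whose space of generators is concentrated on the Koszul diagonal; this is one of the standard characterisations of Koszulness for operads, so that is what we have to establish. Write $\widetilde\calO=\calF_\calM/(\widetilde\calG)$ with $\widetilde\calG$ a quadratic Gröbner basis, and let $\calO=\calF_\calM/(\calG)$ be its monomial replacement, so that $\calG$ consists of quadratic tree monomials.

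First I would look closely at the combinatorial resolution $(\calA_\calG,d)$ of $\calO$ from Section~\ref{oper_monom} in the quadratic case. Its generators are the elements $m\otimes 1$ with $m\in\calM$ together with the indecomposable monomials $T\otimes S_1\wedge\cdots\wedge S_q$ with $q\ge 1$. When the relations are quadratic, each divisor $S_i$ is a two-vertex subtree of $T$, hence is pinned to a single internal edge of $T$; the indecomposability criterion recalled in Section~\ref{oper_monom} says precisely that every internal edge of $T$ must be covered, and since distinct divisors sit at distinct edges this forces $q$ to be exactly the number of internal edges, i.e. $q=w(T)-1$, where $w(T)$ denotes the weight of $T$ (the number of its vertices). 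Thus every generator of $\calA_\calG$ has homological degree equal to its weight minus one: all generators lie on the diagonal. This is where quadraticity really enters — relations of larger weight would need fewer, larger divisors to cover $T$, producing generators off the diagonal.

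Next I would feed this into Theorem~\ref{deformed_diff}: there is a deformed differential $D$ on the \emph{same} graded operad $\calA_\calG$ with $H(\calA_\calG,D)\simeq\widetilde\calO$ and $D(x)=d(\hat x)+(\text{lower terms})$ in the order on underlying tree monomials. Since the underlying graded operad is untouched, $(\calA_\calG,D)$ is a quasi-free resolution of $\widetilde\calO$ whose space of generators $V=(\calA_\calG)^{ab}$ is still concentrated on the diagonal. It remains to see that this resolution is minimal, i.e. that $D$ has no linear part. Here I would use two gradings that $D$ respects: the homological degree, which $D$ lowers by one, and the weight (the number of $\calM$-generators used), which $D$ preserves — this is read off inductively from the formulas defining $D_{k+1}$ and $H_k$ in the proof of Theorem~\ref{deformed_diff}, since $d$ and the homotopy $h$ never alter the underlying tree, and the base case $D_1(T\otimes S)=\tfrac1{c_g}g$ is weight-homogeneous because $g$ is a quadratic relation. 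A single generator of weight $w$ sits in homological degree $w-1$; but any term of $D(x)$, for $x$ of weight $w$, has weight $w$ and homological degree $w-2\neq w-1$. Hence $D(x)$ cannot contain a single generator, $D$ has no linear part, $(\calA_\calG,D)$ is a minimal model of $\widetilde\calO$, and its generators lie on the diagonal; therefore $\widetilde\calO$ is Koszul.

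The only genuinely delicate points are the bookkeeping ones: the exact description of the indecomposable generators of $\calA_\calG$ in the quadratic case (``every internal edge covered, one divisor per edge''), and the weight-homogeneity of the perturbed differential $D$; once both are in hand, the bidegree count that kills the linear part of $D$ is automatic. An alternative route, if one prefers not to invoke the deformed differential directly, is to observe first that $\calA_\calG$ is already a minimal resolution of the quadratic monomial operad $\calO$ — for the same bidegree reason the induced differential on $(\calA_\calG)^{ab}$ vanishes (indeed $\overline{d(x)}=0$ on generators, since removing any $S_i$ uncovers an internal edge and makes the monomial decomposable) — so $\calO$ is Koszul, and then to deduce Koszulness of $\widetilde\calO$ from the spectral sequence of the filtration of $(\calA_\calG,D)^{ab}$ by underlying tree monomial, whose first page is the Quillen homology of $\calO$ and is hence already concentrated on the diagonal.
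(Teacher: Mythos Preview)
Your proposal is correct and follows essentially the same strategy as the paper: the key observation in both is that for a quadratic Gr\"obner basis each indecomposable generator $T\otimes S_1\wedge\cdots\wedge S_q$ of $\calA_\calG$ has $q$ equal to the number of internal edges of $T$, so all generators sit on the Koszul diagonal. The only difference is in packaging the passage from the monomial replacement to $\widetilde\calO$: the paper uses the terse ``upper bound on homology'' argument (your alternative route via the filtration/spectral sequence), whereas your main argument checks directly that $D$ is weight-homogeneous and hence has no linear part, making $(\calA_\calG,D)$ the minimal model outright---a slightly more explicit but equivalent way to reach the same conclusion.
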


\begin{proof}
First of all, it is enough to prove it in the monomial case, since it gives an upper bound on the homology: for the deformed differential, the cohomology may only decrease. In the monomial quadratic case, every divisor of a tree monomial covers one internal edge, and every internal edge is covered by precisely one divisor, so all the generators of our free resolution are of homological degree one less than the number of corollas used in them, hence the homology of the bar complex is concentrated on the diagonal, and our operad is Koszul.
\end{proof}

\subsection{Operads and commutative algebras}\label{com-alg}

Recall a construction of an operad from a graded commutative algebra described in~\cite{Khor}.

Let $A$ be a connected graded associative commutative algebra. Define an operad $\calO_A$ as follows. We put  
 \[
\calO_A(I):=A_{|I|-1},
 \] 
and define, for each shuffle surjection $\phi\colon I\twoheadrightarrow\{1,\ldots,k\}$, the composition map
 \[
\circ_\phi\colon\calP(k)\otimes\calP(\phi^{-1}(1))\otimes\cdots\otimes\calP(\phi^{-1}(k))\to\calP(I)
 \]
to be the product in~$A$: 
 \[
a\circ_\phi(b_1,\ldots,b_k)=ab_1\cdots b_k.  
 \]
The arities of the elements match: since $I=\sqcup_{i=1}^k\phi^{-1}(i)$, we have $|I|-1=k-1+(|\phi^{-1}(1)|-1)+\cdots+(|\phi^{-1}(k)|-1)$. (In the symmetric case, we have also to define the actions of symmetric groups; by definition, all the components of $\calO_A$ are trivial representations of the respective symmetric groups.) 

As we remarked in \cite{DK}, a basis of the algebra~$A$ leads to a basis of the operad~$\calO_A$: product of generators of the polynomial algebra is replaced by the iterated composition of the corresponding generators of the free operad where each composition is substitution into the last slot of an operation. Assume that we know a Gr\"obner basis for the algebra~$A$ (as an associative algebra). It leads to a Gr\"obner basis for the operad~$\calO_A$ as follows: we first impose the quadratic relations defining the operad $\calO_{\k[x_1,\ldots,x_n]}$ coming from the polynomial algebra (stating that the result of a composition depends only on the operations composed, not on the order in which we compose operations), and then use the identification of relations in the polynomial algebra with elements of the corresponding operad, as above. Our next goal is to explain how to use the Anick resolution of the trivial module for~$A$ to construct a small resolution of the trivial module for~$\calO_A$. 

Let us define a collection $\calR$ which will then use to construct a resolution. We take the free operad generated by the collection $\calH$ with $s^{-1}\calH(k)\simeq H^Q_{k-2}(A)$. We take $\calR$ to be the quotient of that operad by the relations $c_1\circ_k c_2=0$, where $c_1\in\calH(k)$. In other words, in $\calR$ all compositions are allowed except for those using the last slot of an operation. We shall show that $\calR$ gives an upper bound on the Quillen homology of $\calO_A$ by proving the following theorem.

\begin{theorem}
There exists a free resolution $(\calF_\calR,d)\to \calO_A$. 
\end{theorem}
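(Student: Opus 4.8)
The plan is to realise $\calF_\calR$ as (the operadic analogue of) Anick's resolution for the commutative algebra $A$, obtained by applying the machinery of Sections~\ref{homology_classes} and~\ref{resol_general} to the Gr\"obner basis of $\calO_A$ induced by a Gr\"obner basis of~$A$. Recall from the discussion above that this Gr\"obner basis has a set $\calG$ of leading tree monomials of a very special shape: the relations expressing that an iterated composition depends only on the operations composed live in arity~$3$, and for the path-lexicographic order their leading terms are left or right combs; the leading terms of the image of the Gr\"obner basis of $A$ are right combs (a degree-$d$ monomial of $A$ is realised as a right comb with $d+1$ leaves). Hence $\calG=\calG_l\cup\calG_r$ with $\calG_l$ the arity-$3$ left combs among the first family and $\calG_r$ all the remaining (right-comb) relations, so $\calG_l\subset\calF_\calM(3)$ and Proposition~\ref{left-right-combs}, Theorem~\ref{representatives} and Proposition~\ref{left-right-combs-reps} apply to the monomial replacement $\calO=\calF_\calM/(\calG)$.

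First I would describe the Anick chains of $\calO$ combinatorially. Along any maximal left comb the only usable relations are the arity-$3$ combs in $\calG_l$, and indecomposability forces such a comb to be covered edge-by-edge in the unique possible way; thus maximal left combs are rigid and contribute no homological degree. All homological information is therefore carried by the maximal right combs, where a covering by combs of $\calG_r$ is, under the dictionary ``right comb $\leftrightarrow$ monomial of $A$, relation of $\calG_r\leftrightarrow$ relation of $A$'', exactly an Anick-chain covering of a monomial of $A$. Reassembling the pieces identifies the graded collection of Anick chains of $\calO$ with the underlying collection of~$\calR$: a vertex is decorated by an Anick chain of $A$ of the internal degree matching its arity, which is precisely a generator of $\calH$ (so that $s^{-1}\calH(k)\simeq H^Q_{k-2}(A)$ by Anick's theorem for~$A$), and the rule that two such pieces may only be grafted at the root of the lower one --- never at the last input of a vertex --- is exactly the defining relation $c_1\circ_k c_2=0$ of~$\calR$. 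By Proposition~\ref{left-right-combs-reps} this collection is a basis of $H^Q(\calO)$, so $(\calF_\calR,d)$, for a suitable $d$, is the minimal resolution of the monomial operad~$\calO$.

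Finally I would deform from $\calO$ to $\calO_A$. Running the Morse matching of the previous paragraph on the inclusion--exclusion resolution $(\calA_\calG,d)$ produces, by algebraic Morse theory, a resolution $(\calF_\calR,d)\to\calO$; since the matching pairs generators having the same underlying tree monomial, the argument of Theorem~\ref{deformed_diff} applies verbatim to deform $d$ into a differential $D$ with $H(\calF_\calR,D)\simeq\calO_A$, the leading term of $D$ on each generator being the old differential $d$ and the corrections involving strictly smaller tree monomials. (Equivalently, one deforms $\calA_\calG$ first and then transfers onto the critical locus.) This gives the desired free resolution $(\calF_\calR,D)\to\calO_A$; note that $\calR$ may be strictly larger than $H^Q(\calO_A)$ --- the deformed homology can drop --- so one only obtains the upper bound on Quillen homology that is claimed, and Koszulness of $A$ (which makes the chains diagonal) is exactly what forces the bound to be sharp.

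The main obstacle is the combinatorial heart of the second paragraph: proving that the left-comb portions of an Anick chain of $\calO$ really are rigid, and that a general Anick chain decomposes uniquely into maximal-right-comb pieces grafted at roots in a way that matches the ``free operad modulo $c_1\circ_k c_2=0$'' description of $\calR$ on the nose, internal- and homological-degree bookkeeping included. A secondary technical point is that Morse theory and homological perturbation, run on the operad $\calA_\calG$ rather than on a bare chain complex, yield an honest \emph{operadic} differential on $\calF_\calR$; this is handled by the homotopy transfer for homotopy co-operads already used elsewhere in the paper, applied to the induced homotopy co-operad structure on the space of generators.
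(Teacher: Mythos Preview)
Your overall plan is on the right track, but there is a genuine gap at the heart of the second paragraph. You correctly observe that the Anick chains of the monomial replacement $\calO=\calF_\calM/(\calG)$ assemble into a collection of the same combinatorial shape as~$\calR$: compositions of ``right-comb pieces'' where the last slot is never used. However, the decoration that a maximal right comb carries is an Anick chain of the \emph{monomial replacement} of~$A$, not a Quillen homology class of~$A$. Your sentence ``a vertex is decorated by an Anick chain of $A$\ldots which is precisely a generator of $\calH$ (so that $s^{-1}\calH(k)\simeq H^Q_{k-2}(A)$ by Anick's theorem for~$A$)'' conflates the two. Anick's theorem produces a free resolution of the trivial $A$-module whose generators are the chains; it does \emph{not} assert that this resolution is minimal, so in general the space of chains is only an upper bound for $H^Q(A)$. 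Your construction therefore yields a free resolution $(\calF_{\calR'},D)\to\calO_A$ with $\calR'$ built from Anick chains of~$A$, which may be strictly larger than the~$\calR$ in the statement.

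The paper closes exactly this gap by performing the deformation of Theorem~\ref{deformed_diff} in two stages rather than one. First one reinstates the lower terms of the right-comb relations only (those coming from the Gr\"obner basis of~$A$), keeping the quadratic commutativity relations monomial. At this intermediate stage each maximal right comb carries a genuine copy of Anick's differential for~$A$, and taking homology replaces the chain-decorated collection~$\calR'$ by the $H^Q(A)$-decorated collection~$\calR$. Only then does one deform along the quadratic left-comb relations, obtaining a differential on~$\calF_\calR$ with the correct homology. Without this intermediate homology computation there is no mechanism in your argument for passing from chains of~$A$ to $H^Q(A)$; a single application of Theorem~\ref{deformed_diff} deforms the differential but does not shrink the space of generators.
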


\begin{proof}
This statement is almost immediate from our previous results. Indeed, we know how to obtain a Gr\"obner basis for $\calO_A$ from a Gr\"obner basis of $A$. The leading terms of that Gr\"obner basis are all left combs 
 \[
\alpha(\beta(1,2),3) \quad \text{and} \quad \alpha(\beta(1,3),2)  
 \]
with three leaves and some right combs 
 \[
\alpha_1(1,\alpha_2(2,\ldots \alpha_k(k,k+1)\ldots)). 
 \]
By Proposition~\ref{left-right-combs-reps}, the corresponding homology classes of the associated monomial operad can be described by elements of the same shape as defined above, but we should start with the operad generated by Anick chains \cite{Anick}, not by the homology. To understand what happens in the transition from the monomial replacement to~$\calO_A$, let us look carefully into the general reconstruction scheme from the previous section. It recovers lower terms of differentials and homotopies by recalling lower terms of elements of the Gr\"obner basis. Let us do the reconstruction in two steps. At first, we shall recall all lower terms of relations except for those starting with $\alpha(\beta(\textrm{-},\textrm{-}),\textrm{-})$; the latter are still assumed to vanish. On the next step we shall recall all lower terms of those quadratic relations. Note that after the first step we model many copies of the associative algebra resolution and the differential there; so we can compute the homology 
explicitly. At the next step, a differential will be induced on this homology we computed, and we end up with a resolution of the required type. 
\end{proof}

In some cases the existence of such a resolution is enough to compute Quillen homology of~$\calO_A$; for example, it is so when the algebra~$A$ is Koszul, as we shall see now. In general, the differential of this resolution incorporates lots of information, including the higher operations (Massey (co)products) on the homology of~$A$.

Recall that if the algebra $A$ is quadratic, then the operad $\calO_A$ is quadratic as well. In~\cite{DK}, we proved that if the algebra~$A$ is PBW, then the operad~$\calO_A$ is PBW as well, and hence is Koszul. Now we shall prove the following substantial generalisation of this statement (substantially simplifying the proof of this statement given in~\cite{Khor}).

\begin{theorem}\label{O_AisKoszul}
If the algebra $A$ is Koszul, then the operad $\calO_A$ is Koszul as well. 
\end{theorem}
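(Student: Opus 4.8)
The plan is to read off Koszulness of $\calO_A$ directly from the free resolution $(\calF_\calR,d)\to\calO_A$ just constructed, by controlling the bidegrees of the classes that survive. Since $A$ is Koszul it is in particular quadratic, so $\calO_A$ is quadratic, and Koszulness of $\calO_A$ amounts to the statement that its Quillen homology is concentrated on the diagonal: grading tree monomials by \emph{weight} (the number of binary corollas used, so that arity $n$ corresponds to weight $n-1$), we must show that $H^Q_i(\calO_A)$ in weight $w$ vanishes unless $i=w-1$, equivalently that every generator surviving to $H^Q(\calO_A)$ has homological degree equal to the number of internal edges of its underlying tree.

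First I would unwind the two-step reconstruction used in the proof of the preceding theorem. After the first step --- recovering the lower-order terms of every element of the Gr\"obner basis of $\calO_A$ whose leading term is \emph{not} a quadratic left comb $\alpha(\beta(1,2),3)$ or $\alpha(\beta(1,3),2)$ --- the differential on the space of generators decomposes, over the underlying skeleta of tree monomials, as a direct sum of complexes, each a tensor product of copies of the complex of generators of the Anick resolution of the trivial module over $A$, one copy per maximal right comb, the differential in each factor being precisely the one recovering the lower terms of the Gr\"obner basis of $A$. Hence the homology of the factor attached to a maximal right comb with $m$ corollas is the weight-$m$ part of $\Tor^A(\k,\k)$.

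Now the bidegree count, which is where Koszulness of $A$ enters. In a generator of the monomial resolution every internal edge of the underlying tree carries exactly one mark: an edge to a left child carries one of the length-two left-comb relations (one mark per such edge), while the right edges are covered by Anick chains built from the right-comb relations coming from $A$. When $A$ is Koszul, $\Tor^A(\k,\k)$ is concentrated on the diagonal, so the surviving class in the factor of a maximal right comb with $m$ corollas lies in homological degree $m$, i.e.\ is represented by an Anick $(m-1)$-chain, which contributes exactly one mark per right edge of that comb. Therefore a surviving generator carries one mark per internal edge of its tree, so its homological degree equals the number of internal edges, i.e.\ its weight minus one; the homology after the first step is concentrated on the diagonal $\{\,i=w-1\,\}$. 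The second step recovers the lower-order terms of the relations $\alpha(\beta(1,2),3)$ and $\alpha(\beta(1,3),2)$, inducing a further differential on this homology; that differential lowers homological degree by one while preserving weight, so on a complex living on the diagonal it must vanish. Hence $H^Q(\calO_A)$ coincides with the homology after the first step, is concentrated on the diagonal, and $\calO_A$ is Koszul.

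The step I expect to be the main obstacle is the decomposition claimed after the first reconstruction step: one has to verify that recovering the lower terms of the Gr\"obner basis of $A$ genuinely assembles into independent copies of the Anick complex of $A$, one per maximal right comb, with no interaction between a comb and the part of the tree above it, and that the weight of the $\Tor^A$ class produced in the factor of a maximal comb equals the number of corollas in that comb for every surviving generator. This requires the combinatorics of how combs sit inside trees together with the compatibility of the Anick numbering of Proposition~\ref{left-right-combs} with that decomposition; granting it, the conclusion is the short degree count above.
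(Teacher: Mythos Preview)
Your approach is essentially the paper's, but you are re-deriving by hand what the paper packages into the preceding theorem. The paper's proof is two lines: the resolution $(\calF_\calR,d)\to\calO_A$ has as its space of generators the collection $\calR$, which is built by composing (in all slots but the last) copies of $\calH$, where $s^{-1}\calH(k)\simeq H^Q_{k-2}(A)$; since $A$ is Koszul, $H^Q(A)$ is concentrated on the diagonal, hence so is $\calH$, hence so is any iterated composition of elements of $\calH$, hence so is $\calR$, hence so is its homology $H^Q(\calO_A)$. You are unpacking this same computation at the level of the two-step reconstruction and the Anick chains; that is fine, but it is longer and not needed once the preceding theorem is in hand.

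There is one genuine slip in your write-up. The sentence ``is represented by an Anick $(m-1)$-chain, which contributes exactly one mark per right edge of that comb'' is only correct when $A$ happens to be PBW, i.e.\ when its Gr\"obner basis is quadratic. A Koszul algebra need not be PBW; its Gr\"obner basis may contain elements of higher degree, in which case the Anick resolution is not minimal and the chains do not match the right edges one-for-one. What is true, and what you actually need, is that after taking homology of the step-one differential the factor attached to a maximal right comb with $m$ corollas is the weight-$m$ part of $H^Q(A)$, and Koszulness of $A$ places that in the correct homological degree. So the bidegree count should be phrased in terms of the homological degree of $H^Q(A)$, not as an edge-by-edge mark count. (You also have an off-by-one: you write ``homological degree $m$'' and then ``Anick $(m-1)$-chain'', but an $(m-1)$-chain carries $m-1$ marks.) Once you phrase it this way, your second-step argument --- that the induced differential must vanish because it would move off the diagonal --- is exactly the paper's.
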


\begin{proof}
Koszulness of our algebra implies that the homology of the bar resolution is concentrated on the diagonal. Consequently, the operad $\calR$ constructed above is automatically concentrated on the diagonal, and so is its homology,  which completes the proof.
\end{proof}

\subsection{The operads of Rota--Baxter algebras}

The main goal of this section is to compute Quillen homology for the operad of Rota--Baxter algebras, and the operad of noncommutative Rota--Baxter algebras. Those are among the simplest examples of operads which are not covered by the Koszul duality theory, being operads with nonhomogeneous relations. Note that it is even not clear that these operads have minimal models: being operads with nontrivial unary operations, they are not covered by results of~\cite{MarklModel}, and indeed some operads with nontrivial unary operations do not admit minimal models. 

\subsubsection{The operads \texorpdfstring{$RB$} and \texorpdfstring{$ncRB$}, and their Gr\"obner bases}

\begin{definition}
A \emph{commutative Rota--Baxter algebra of weight~$\lambda$} is a vector space with an associative commutative product $a,b\mapsto a\cdot b$ and a unary operator~$P$ which satisfy the following identity:
 \[
P(a)\cdot P(b)=P(P(a)\cdot b+a\cdot P(b)+\lambda a\cdot b). 
 \]
We denote by $RB$ the operad of Rota--Baxter algebras. We view it as a shuffle operad with one binary and one unary generator.   
\end{definition}

Commutative Rota--Baxter algebras were defined in~\cite{Rota} with a motivation coming from probability theory~\cite{Baxter}. Various constructions of free commutative Rota--Baxter algebras appear in \cite{Rota,Cartier,GKe}. The latter paper also contains extensive bibliography and information on various applications of those algebras.

\begin{definition}
A \emph{noncommutative Rota--Baxter algebra of weight~$\lambda$} is a vector space with an associative product $a,b\mapsto a\cdot b$ and a unary operator~$P$ which satisfy the same identity as above:
 \[
P(a)\cdot P(b)=P(P(a)\cdot b+a\cdot P(b)+\lambda a\cdot b). 
 \]
We denote by $ncRB$ the operad of noncommutative Rota--Baxter algebras. Somehow, it is a bit simpler than the operad in the commutative case, because it can be viewed as a non-symmetric operad with one binary and one unary generator.   
\end{definition}

Noncommutative Rota--Baxter algebras has been extensively studied in the past years. We refer the reader to the paper of Ebrahimi--Fard and Guo \cite{EF-G2} for an extensive discussion of applications and occurrences of those algebras in various areas of mathematics, and a combinatorial construction of the corresponding free algebras.

Let us consider the path-lexicographic ordering of the free operad; we assume that $P>\cdot$. 

\begin{proposition}
The defining relations for operads $RB$ and $ncRB$ form a Gr\"obner basis.  
\end{proposition}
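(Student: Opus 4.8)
The plan is to verify directly that the S-polynomials arising from the defining relations all reduce to zero, which by the shuffle-operadic Buchberger criterion established in~\cite{DK} suffices to conclude that the relations form a Gr\"obner basis. For both operads the defining relations are the associativity relation(s) for the product~$\cdot$ and the single Rota--Baxter relation
\[
P(a)\cdot P(b)-P(P(a)\cdot b+a\cdot P(b)+\lambda\, a\cdot b)=0.
\]
Under the path-lexicographic order with $P>\cdot$, the leading term of the Rota--Baxter relation is the tree monomial $P(a)\cdot P(b)$, and the leading term(s) of associativity are as in the usual associative operad. First I would list the leading monomials explicitly and describe which tree monomials are divisible by which leading terms; this makes the shape of all common multiples transparent.

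Next I would enumerate the S-polynomials coming from overlaps of leading terms. In the noncommutative case there are: (i) the classical overlap of the associativity leading term with itself (which reduces to zero exactly as in the associative operad, so nothing new happens here); (ii) overlaps of the associativity leading term with the Rota--Baxter leading term $P(-)\cdot P(-)$, obtained by substituting $P(a)\cdot P(b)$ into one of the two input slots of a product that participates in an associativity overlap; and (iii) the self-overlap of the Rota--Baxter leading term, where one $P(a)\cdot P(b)$ is substituted for an argument of the outer product in another copy of $P(-)\cdot P(-)$ — there are a couple of such overlaps depending on which $P$-argument receives the substitution, plus the shuffle variants. For each of these I would carry out the reduction, using associativity to bring everything to a normal form and using the Rota--Baxter relation to eliminate every occurrence of $P(-)\cdot P(-)$; the claim is that in each case the two ways of reducing the S-polynomial agree, i.e.\ it reduces to zero. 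In the commutative case the same list applies, with the associativity overlaps replaced by the (already Gr\"obner) relations of the commutative shuffle operad and with the extra symmetry making some of the overlaps coincide; one checks the Rota--Baxter overlaps survive the reduction to zero just as before. I expect the computation to be finite and, for overlaps of degree three and four in the number of generator-corollas, entirely mechanical.

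The step I expect to be the main obstacle is the self-overlap of the Rota--Baxter leading term, where iterating the relation produces nested terms of the form $P(P(\cdots)\cdot\cdots)$ and the reductions branch: one must check that applying the relation to the ``inner'' $P(-)\cdot P(-)$ first versus the ``outer'' one first leads to the same fully reduced expression. This is the operadic analogue of the Rota--Baxter identity's well-known associativity-like coherence, and the bookkeeping of the $\lambda$-linear lower-order terms together with the shuffle-surjection labels on the leaves is where an error is most likely to creep in. Once this confluence is checked (and the mixed associativity/Rota--Baxter overlaps, which are milder), there are no further overlaps, so the relations are a Gr\"obner basis; I would also remark that since the leading terms involved have few internal edges, this computation simultaneously shows the Gr\"obner basis is finite and lets one read off that $RB$ and $ncRB$ fall into the ``left combs and right combs'' framework of Proposition~\ref{left-right-combs}, which is what the subsequent homology computations will use.
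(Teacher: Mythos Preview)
Your proposal has a genuine error at the very first step: you have the wrong leading term for the Rota--Baxter relation. Under the path-lexicographic ordering with $P>\cdot$, the root vertex dominates, so any monomial whose root is $P$ is larger than $P(a_1)\cdot P(a_2)$, whose root is~$\cdot$. Comparing the three monomials with root $P$, the one whose path to leaf~$1$ is longest and $P$-heaviest wins, and that is $P(P(a_1)\cdot a_2)$. This is exactly the leading term the paper uses.

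This mistake propagates through your entire overlap analysis. With the correct leading term $P(P(a_1)\cdot a_2)$ there is \emph{no} overlap with the associativity leading term $(a_1\cdot a_2)\cdot a_3$ at all: the two patterns cannot share an internal edge, because the root of the Rota--Baxter leading term is unary. The only nontrivial small common multiple is the self-overlap $P(P(P(a_1)\cdot a_2)\cdot a_3)$, obtained by matching the inner $P(-)$ of one copy with the outer $P(-)$ of another. Thus in the noncommutative case there is a single S-polynomial to reduce (two in the commutative case, coming from the extra shuffle), not the three families you list. Your type~(ii) overlaps do not exist, and your type~(iii) self-overlaps (substituting $P(a)\cdot P(b)$ into a $P$-slot of $P(-)\cdot P(-)$) are based on the wrong leading pattern and are not the relevant common multiples.

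A minor point: your closing remark that $RB$ and $ncRB$ fall into the framework of Proposition~\ref{left-right-combs} is also off, since that proposition requires all generators to be binary while here $P$ is unary; the paper handles the minimality of the monomial resolution for these operads by a direct argument instead.
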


\begin{proof}
Here we present a proof for the case of $ncRB$, the proof for $RB$ is essentially the same, with the only exception that there are two S-polynomials to be reduced, as opposed to one S-polynomial in the case of $ncRB$ (which, as we pointed above, is easier because we are dealing with a non-symmetric operad).
 
For the associative suboperad of $ncRB$, the defining relations form a Gr\"obner basis, so the S-polynomials coming from the small common multiples the leading term of the associativity relation has with itself clearly can be reduced to zero. The leading term of the Rota--Baxter relation is $P(P(a_1)a_2)$. This term only has a nontrivial overlap with itself, not with the leading term of the associativity relation, and that overlap is $P(P(P(a_1)\cdot a_2)\cdot a_3)$. From this overlap, we compute the S-polynomial 
\begin{multline*}
-P(P(a_1\cdot P(a_2))\cdot a_3)-\lambda P(P(a_1\cdot a_2)\cdot a_3)+P((P(a_1)\cdot P(a_2))\cdot a_3)+\\
+P((P(a_1)\cdot a_2)\cdot P(a_3))+\lambda P((P(a_1)\cdot a_2)\cdot a_3)-P(P(a_1)\cdot a_2)\cdot P(a_3), 
\end{multline*}
and it can be reduced to zero by a lengthy sequence of reductions which we omit here (but which in fact can be read from the formula for $d\nu_3$ in Proposition~\ref{differential} below). By Diamond Lemma~\cite{DK}, our relations form a Gr\"obner basis.  
\end{proof}

\begin{remark}
In the case of the operad $ncRB$, our computation immediately provides bases for free noncommutative Rota--Baxter algebras. Indeed, since our operad is non-symmetric, the degree $n$ part of the free noncommutative Rota--Baxter algebra generated by the set $B$ is nothing but $ncRB(n)\otimes V^{\otimes n}$, where $V=\mathop{\mathrm{span}}(B)$, so we can use the above Gr\"obner basis to describe that part. More precisely, we first define the set of admissible expressions on a set $B$ recursively as follows:
\begin{itemize}
 \item elements of $B$ are admissible expressions;
 \item if $b$ is an admissible expression, then $P(b)$ is an admissible expression;
 \item if $b_1,\ldots,b_k$ are admissible expressions, and for each $i$ either $b_i$ is an element of $B$ or $b_i=P(b_i')$ with $b_i'$ an admissible expression, then their associative product $b_1\cdot b_2\cdots b_k$ is an admissible expression.
\end{itemize}
Based on this definition, we shall call some of admissible expressions the Rota--Baxter monomials, tracing the construction of an admissible expression and putting some restrictions. Namely,
\begin{itemize}
 \item elements of $B$ are Rota--Baxter monomials;
 \item if $b$ is a Rota--Baxter monomial, which, as an admissible expression, is either $b=P(b')$ or $b=b_1\cdot b_2\cdots b_k$ with $b_1\in B$, then $P(b)$ is a Rota--Baxter monomial;
 \item if $b_1,\ldots,b_k$ are Rota--Baxter monomials, and for each $i$ either $b_i\in B$ or $b_i=P(b_i')$ for some $b_i'$, then their associative product $b_1\cdot b_2\cdots b_k$ is a Rota--Baxter monomial.
\end{itemize}
Our previous discussion means that the set of all Rota--Baxter monomials forms a basis in the free noncommutative Rota--Baxter algebra generated by the set~$B$. 
It would be interesting to compare this basis with the basis from~\cite{EF-G1}.
 \end{remark}

\subsubsection{Quillen homology of the operads~\texorpdfstring{$RB$}{RB} and \texorpdfstring{$ncRB$}{ncRB}}

\begin{proposition}
For each of the operads $RB$ and $ncRB$, the resolution for its monomial version from Section~\ref{oper_monom} is minimal, that is the differential induced on the space of generators is zero. 
\end{proposition}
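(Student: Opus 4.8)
The plan is to prove that the differential of $\calA_\calG$ carries every free generator of positive homological degree into the span of the decomposable elements; by definition this is exactly the assertion that the induced differential on the generators is zero, i.e.\ that the resolution is minimal. Recall from Section~\ref{oper_monom} that the generators of $\calA_\calG$ in homological degree $q\ge 1$ are the \emph{indecomposable} monomials $T\otimes S_1\wedge\cdots\wedge S_q$, in which each $S_i$ is a divisor of~$T$ whose underlying tree monomial is a relation, and ``indecomposable'' means that every internal edge of~$T$ runs between two internal vertices of at least one~$S_i$. Since $d=\sum_r\partial_r$ just erases one marked divisor at a time,
\[
d\bigl(T\otimes S_1\wedge\cdots\wedge S_q\bigr)=\sum_{r=1}^{q}\pm\,T\otimes\bigl(S_1\wedge\cdots\wedge\widehat{S_r}\wedge\cdots\wedge S_q\bigr),
\]
so it is enough to show that, for every~$r$, unmarking $S_r$ yields a decomposable monomial, i.e.\ that some internal edge of~$T$ is no longer covered by the divisors $\{S_i\}_{i\ne r}$.

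First I would record the leading monomials coming from the Gr\"obner bases established above: for $ncRB$ these are the associativity leading term $T_1=(a_1\cdot a_2)\cdot a_3$ and the Rota--Baxter leading term $T_2=P(P(a_1)\cdot a_2)$, while for $RB$ one has, instead of $T_1$, the two commutative-associativity leading terms $(a_1\cdot a_2)\cdot a_3$ and $(a_1\cdot a_3)\cdot a_2$, the term $T_2$ being unchanged. The crucial step I would isolate is a \emph{disjointness lemma}: inside any tree monomial~$T$, no internal edge of~$T$ is an internal edge of two distinct relation-divisors. To prove it I would label each internal edge $e=(v\to w)$ of~$T$ (with $w$ nearer the root) by the triple consisting of the operation at~$v$, the operation at~$w$, and the input slot of~$w$ through which $e$ enters; one checks that the unique internal edge of any three-leaf leading pattern has type $(\cdot,\cdot,1)$ --- all these patterns are left combs --- whereas a copy of~$T_2$ contributes exactly one internal edge of type $(\cdot,P)$ and one of type $(P,\cdot,1)$. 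As the triples $(\cdot,\cdot,1)$, $(\cdot,P)$, $(P,\cdot,1)$ are pairwise distinct, the type of~$e$ already forces the shape of any relation-divisor having $e$ as an internal edge; and once the shape is known, the two endpoints of~$e$ determine the divisor uniquely --- a left-comb divisor is the minimal subtree spanned by the two vertices of~$e$, and a copy of~$T_2$ is recovered from~$e$ by adjoining the single additional vertex dictated by the pattern, using that~$P$ is unary. This makes the divisor unique, which is the lemma.

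Granting the lemma, I would conclude as follows. If $T\otimes S_1\wedge\cdots\wedge S_q$ is indecomposable, then every internal edge of~$T$ is covered by at least one~$S_i$, and by the lemma the internal-edge sets of the individual $S_i$ are pairwise disjoint; hence they partition the set of internal edges of~$T$. Each $S_i$ is a copy of a relation and so has at least one internal edge ($T_1$ and the commutative-associativity patterns have one, $T_2$ has two), so after unmarking $S_r$ the internal edges of~$S_r$ are left uncovered, and $T\otimes(S_1\wedge\cdots\wedge\widehat{S_r}\wedge\cdots\wedge S_q)$ is decomposable. Therefore the differential induced on $(\calA_\calG)^{ab}$ vanishes, i.e.\ the resolution is minimal; and the argument is identical for $RB$ and for $ncRB$.

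The hard part will be the disjointness lemma: one has to make sure the enumeration of internal-edge types of the leading monomials is exhaustive, and that the type of an internal edge, read off inside the ambient tree, genuinely pins down its relation-divisor (this is where unarity of~$P$ and the left-comb shape of the three-leaf patterns are used). Everything afterwards is a routine unwinding of the notion of decomposability recalled in Section~\ref{oper_monom}.
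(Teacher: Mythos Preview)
Your argument is correct. The disjointness lemma is sound: the three edge types $(\cdot,\cdot,1)$, $(\cdot,P)$, $(P,\cdot,1)$ are indeed the only ones occurring as internal edges of the leading monomials, they are pairwise distinct, and from any such edge the containing relation-divisor is uniquely reconstructed inside~$T$. The deduction that erasing any $S_r$ from an indecomposable monomial produces a decomposable one then follows immediately, and this is precisely minimality.

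The paper's proof takes a different route. Rather than proving an edge-disjointness lemma, it first \emph{enumerates} the underlying tree monomials of all indecomposable generators explicitly --- they are the left combs $(\ldots((a_1\cdot a_{i_2})\cdot a_{i_3})\cdots)\cdot a_{i_n}$ and the ``nested'' monomials $P(P(\ldots P(P(a_1)\cdot a_{i_2})\cdots)\cdot a_{i_n})$ --- and then observes that each such monomial admits a unique indecomposable covering by relations, hence the differential lands in decomposables. Your approach is more direct for the bare minimality statement and avoids the enumeration entirely; on the other hand, the paper's explicit list of generators is not wasted effort, since it is immediately reused in the next theorem to read off the dimensions $\dim H^Q_l(RB)(k)$ and $\dim H^Q_l(ncRB)(k)$. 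Your disjointness lemma in fact implies the paper's enumeration (the allowed edge types force the shape of~$T$), so if you wanted to continue to the dimension count you could extract that list from what you have already proved.
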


\begin{proof}
In the case of the operad $RB$, the overlaps obtained from the leading monomials $(a_1\cdot a_2)\cdot a_3$, $(a_1\cdot a_3)\cdot a_2$, and $P(P(a_1)\cdot a_2)$ are, in arity $n$,
 \[
((\ldots((a_1\cdot a_{i_2})\cdot a_{i_3})\cdots )\cdot a_{i_n}
 \]
and
 \[
P(P(P(\ldots P(P(P(a_1)\cdot a_{i_2})\cdot a_{i_3})\cdots )\cdot a_{i_{n-1}})\cdot a_{i_n}),
 \]
for all permutations $i_2,i_3\ldots,i_n$ of integers $2,3,\ldots,n$. It is easy to see that for each of them there exists only one indecomposable covering by relations, so the differential maps such a generator to the space of decomposable elements, and the statement follows. 

Similarly, in the case of the operad $ncRB$, the only overlaps obtained from the leading monomials $(a_1\cdot a_2)\cdot a_3$ and $P(P(a_1)\cdot a_2)$ are, in arity $n$, 
 \[
((\ldots((a_1\cdot a_2)\cdot a_3)\cdots )\cdot a_{n-1})\cdot a_n
 \]
and
 \[
P(P(P(\ldots P(P(P(a_1)\cdot a_2)\cdot a_3)\cdots )\cdot a_{n-1})\cdot a_n).
 \]
It is easy to see that for each of them there exists only one indecomposable covering by relations, so the differential maps such a generator to the space of decomposable elements, and the statement follows. 
\end{proof}

\begin{theorem}
We have 
\begin{gather*}
\dim H^Q_l(RB)(k)=
\begin{cases}
(k-1)!, l=k\ge1,\\  
(k-1)!, l=k+1\ge2.\\   
\end{cases}\\
\dim H^Q_l(ncRB)(k)=
\begin{cases}
1, l=k\ge1,\\   
1, l=k+1\ge2.  
\end{cases}
\end{gather*}
\end{theorem}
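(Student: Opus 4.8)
The plan is to reduce everything to the monomial case already treated in the preceding proposition, and then to show that passing to the genuine (nonhomogeneous) relations changes nothing.

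By the preceding proposition the inclusion--exclusion resolution $(\calA_\calG,d)$ of Section~\ref{oper_monom} is already minimal for the monomial versions $RB^{mon}$ and $ncRB^{mon}$, so its space of generators computes Quillen homology and the overlaps exhibited there form a basis of it; it only remains to count those overlaps. In arity $k$ there are exactly two families: the \emph{left combs} $((\cdots(a_1\cdot a_{i_2})\cdot a_{i_3})\cdots)\cdot a_{i_k}$, coming from iterated self-overlaps of the associativity and commutativity relations, and the \emph{$P$-chains} $P(\cdots P(P(a_1)\cdot a_{i_2})\cdots\cdot a_{i_k})$, coming from iterated self-overlaps of the Rota--Baxter relation, with $(i_2,\ldots,i_k)$ ranging over orderings of $\{2,\ldots,k\}$; the first family lies in homological degree $l=k$ and the second in homological degree $l=k+1$. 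Since $RB$ is a shuffle operad all $(k-1)!$ orderings give distinct generators, whereas for the non-symmetric operad $ncRB$ the leaves are forced to stay in increasing order and each family contributes a single generator. This gives the asserted dimensions for $RB^{mon}$ and $ncRB^{mon}$.

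To deduce the statement for $RB$ and $ncRB$ themselves, recall from the proposition above that their defining relations form a Gr\"obner basis, so Theorem~\ref{deformed_diff} provides a deformed resolution $(\calA_\calG,D)$ of the genuine operad with $D(x)=d(\hat x)+(\text{lower tree monomials})$. The Quillen homology is then the homology of the differential $\bar D$ induced by $D$ on the indecomposables $(\calA_\calG)^{ab}$, and minimality of the monomial resolution gives $\bar d=0$, so $\bar D$ consists only of lower-order terms. Everything thus follows from the claim that $\bar D=0$, i.e.\ that the deformed resolution is again minimal. As $\bar D$ preserves arity, one may argue arity by arity; in arity $k$ the only generators of $\calA_\calG$ of positive homological degree are the left combs in degree $k$ and the $P$-chains in degree $k+1$, so a left comb sits in the lowest positive degree occurring in its arity and $\bar D$ kills it for degree reasons, and the only possible value of $\bar D$ on a $P$-chain generator is a linear combination of left combs. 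To rule this out I would track the following invariant: call a tree monomial \emph{$P$-dominant} if it is headed by $P$ or is an iterated associative product one of whose factors is headed by $P$. Inspecting the four terms of the Rota--Baxter relation, $P(a)P(b)=P(P(a)b)+P(aP(b))+\lambda P(ab)$, together with the associativity and commutativity relations, one checks that reducing a divisor of a $P$-dominant tree monomial modulo the Gr\"obner basis again produces only $P$-dominant tree monomials, each of which in particular contains a letter $P$. Since the lower-order terms of $D$ are built from such reductions and each $P$-chain is $P$-dominant, every tree monomial occurring in $D$ of a $P$-chain generator contains a letter $P$, whereas the left combs contain none; hence $\bar D$ also vanishes on the $P$-chains, $\bar D=0$, and $H^Q(RB)\simeq H^Q(RB^{mon})$, $H^Q(ncRB)\simeq H^Q(ncRB^{mon})$, as claimed.

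The main obstacle is making this last step rigorous: one has to run through the inductive construction of $D$ and $H$ in the proof of Theorem~\ref{deformed_diff} and check that the corrections $-H_{k-1}D_kd_{k+1}$ and the homotopies used to define $H$ likewise preserve $P$-dominance, so that no term with strictly fewer (a fortiori zero) occurrences of $P$ is ever produced. Concretely this is the combinatorial observation, already visible in the reduction of the single Rota--Baxter S-polynomial, that reductions only push occurrences of $P$ inward and never eliminate the outermost one; the explicit form of the resulting differential is recorded in Proposition~\ref{differential}.
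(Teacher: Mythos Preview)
Your argument is essentially the same as the paper's: split the generators into the associativity family and the Rota--Baxter family, observe they sit in adjacent homological degrees so the only possible component of $\bar D$ goes from the latter to the former, and rule this out by tracking the number of occurrences of~$P$ through the inductive construction of~$D$. Your ``$P$-dominant'' invariant is a bit more than you need --- the paper simply uses ``degree $\ge 1$ in $P$'', which is exactly the conclusion you end up drawing anyway --- and what you flag as the ``main obstacle'' is treated just as briefly in the paper (one sentence deferring to the construction in Theorem~\ref{deformed_diff}), so you are not missing anything substantial there.
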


\begin{proof}
In both cases, the subspace of generators of the free resolution splits into two parts: the part obtained as overlaps of the leading terms of the associativity relations, and the part obtained as overlaps of the leading term of the Rota--Baxter relation with itself. In arity $k$, the former are all of homological degree $k-1$, while the latter --- of homological degree~$k$. This means that when we compute the homology of the differential of our resolution restricted to the space of generators, the only cancellations can happen if some of the elements resolving the associativity relation appear as differentials of some elements resolving the Rota--Baxter relation. However, since all monomials in the Rota--Baxter relation are of degree at least~$1$ in~$P$, the way the deformed differential is constructed in the proof of Theorem~\ref{deformed_diff} shows that all the terms appearing in the formulas for the respective differentials are also of degree at least~$1$ in~$P$, so no cancellations are possible.
\end{proof}

In addition to Quillen homology computation, one can ask for explicit formulas for differentials in the free resolutions. It is not difficult to write down formulas for small arities (see the example below), but in general compact formulas are yet to be found. We expect that they incorporate the Spitzer's identity and its noncommutative analogue \cite{EF-GB-P}. However, the following statement is immediate.

\begin{corollary}  
\begin{itemize}
 \item The minimal model $RB_\infty$ for the operad $RB$ is a quasi-free operad whose space of generators has a $(k-1)!$-dimensional space of generators of homological degree~$(k-2)$ in each arity~$k\ge2$, and a $(k-1)!$-dimensional space of generators of homological degree~$k-1$ in each arity~$k\ge1$.
 \item The minimal model $ncRB_\infty$ for the operad $ncRB$ is a quasi-free operad generated by operations $\mu_k$, $k\ge 2$ of arity $k$ and homological degree~$k-2$, and $\nu_l$, $l\ge 1$ of arity $l$ and homological degree~$l-1$. 
\end{itemize}
\end{corollary}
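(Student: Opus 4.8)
The plan is to deduce both assertions directly from the Quillen homology computation of the previous theorem, using the general principle recalled in Section~\ref{sec:operads}: the minimal model of an operad $\calO$, when it exists, is a quasi-free shuffle dg operad whose space of generators is, up to the suspension normalization, the Quillen homology $H^Q(\calO)$, a class of arity $k$ contributing exactly one generator of arity $k$ in the corresponding homological degree. So the proof has two parts: first ensure that $RB$ and $ncRB$ \emph{do} admit minimal models (the only subtle point, since, as noted above, operads with unary operations need not have them), and then translate the dimension formulas into a description of the generating space.

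For existence I would argue that the explicit quasi-free resolution $(\calA_\calG,D)$ produced by Theorem~\ref{deformed_diff} is already minimal in these two cases, so no further contraction is needed. This uses two facts established above: the proposition preceding the Quillen homology theorem shows that the resolution $\calA_\calG$ of the monomial replacement is minimal, i.e.\ the differential induces zero on the space of generators; and the $P$-degree argument in the proof of the Quillen homology theorem shows that the construction of the deformed differential $D$ in Theorem~\ref{deformed_diff} produces, on each generator coming from an associativity overlap (of $P$-degree $0$) or a Rota--Baxter self-overlap (of $P$-degree $\ge 1$), only terms of the same $P$-degree, so that $D$ too induces zero on generators. Hence $(\calA_\calG,D)$ is the minimal model of $RB$ (resp.\ $ncRB$), and its space of generators is exactly the space of indecomposable generators of $\calA_\calG$.

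It then remains to read off this generating space in each arity. By the analysis in the proof of the Quillen homology theorem, in arity $k$ the indecomposable generators of $\calA_\calG$ split into those obtained as overlaps of the leading terms of the associativity relations and those obtained as self-overlaps of the leading term of the Rota--Baxter relation, the two families sitting in consecutive homological degrees and having the dimensions computed there: for $RB$ there are $(k-1)!$ of each (one associativity overlap per bracketing of $a_1,a_{i_2},\dots,a_{i_k}$, and one Rota--Baxter overlap per such ordering), while for $ncRB$ each family is one-dimensional. This yields precisely the stated description: for $RB_\infty$, a $(k-1)!$-dimensional space of generators of homological degree $k-2$ in each arity $k\ge 2$ and a $(k-1)!$-dimensional one of homological degree $k-1$ in each arity $k\ge 1$; for $ncRB_\infty$, since each graded component is one-dimensional, we may simply name the generators, obtaining $\mu_k$ of arity $k\ge 2$ and homological degree $k-2$ from the associativity overlaps and $\nu_l$ of arity $l\ge 1$ and homological degree $l-1$ from the Rota--Baxter overlaps.

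The computation is thus essentially bookkeeping once the Quillen homology is known; the step I expect to be worth spelling out most carefully is the preservation of minimality under the deformation of the differential, since this is precisely what circumvents the general failure of existence of minimal models for operads with nontrivial unary operations and makes the corollary meaningful.
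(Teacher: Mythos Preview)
Your approach is correct and is essentially what the paper does: it calls the corollary ``immediate'' from the Quillen homology theorem, so what you have written is simply a careful unpacking of that word. In particular, your observation that the resolution $(\calA_\calG,D)$ is itself minimal---because the induced differential on generators vanishes, as established in the proofs of the preceding proposition and theorem---is exactly the reason the minimal model exists here despite the presence of unary operations, and makes explicit something the paper leaves implicit.

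One small imprecision: you write that $D$ produces ``only terms of the same $P$-degree'' on each generator. This is true for the associativity generators (everything stays in $P$-degree~$0$), but not literally for the Rota--Baxter generators: the Rota--Baxter relation itself has monomials of $P$-degree both~$1$ and~$2$, so the deformed differential on an RB overlap produces terms of varying $P$-degree. The statement actually used in the paper (and sufficient for your conclusion) is that all such terms have $P$-degree \emph{at least}~$1$; this is enough to prevent any projection onto the associativity generators, which sit in $P$-degree~$0$. With that wording corrected, your argument goes through unchanged.
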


Let us conclude this section with formulas for low arities differentials in $ncRB_\infty$, to give the reader a flavour of what sort of formulas to expect.       

\begin{example}\label{differential}
We have $d\nu_1=d\mu_2=0$, and
\begin{gather*}
d\nu_2=P(\mu_2(P(\textrm{-}),\textrm{-}))+P(\mu_2(\textrm{-},P(\textrm{-})))-\mu_2(P(\textrm{-}),P(\textrm{-}))+\lambda P(\mu_2(\textrm{-},\textrm{-})),\\
d\mu_3=\mu_2(\mu_2(\textrm{-},\textrm{-}),\textrm{-})-\mu_2(\textrm{-},\mu_2(\textrm{-},\textrm{-})),
\end{gather*}
\vspace{-7mm}
\begin{multline*}
d\nu_3=\mu_3(P(\textrm{-}),P(\textrm{-}),P(\textrm{-}))-P(\mu_2(\nu_2(\textrm{-},\textrm{-}),\textrm{-})-\mu_2(\textrm{-},\nu_2(\textrm{-},\textrm{-})))
-\\-P(\mu_3(P(\textrm{-}),P(\textrm{-}),\textrm{-})+\mu_3(P(\textrm{-}),\textrm{-},P(\textrm{-}))+\mu_3(\textrm{-},P(\textrm{-}),P(\textrm{-})))+\\
+\nu_2(\mu_2(P(\textrm{-}),\textrm{-}),\textrm{-})-\nu_2(\textrm{-},\mu_2(P(\textrm{-}),\textrm{-}))+\nu_2(\mu_2(\textrm{-},P(\textrm{-})),\textrm{-})-\nu_2(\textrm{-},\mu_2(\textrm{-},P(\textrm{-})))+\\ 
+\mu_2(\nu_2(\textrm{-},\textrm{-}),P(\textrm{-}))-\mu_2(P(\textrm{-}),\nu_2(\textrm{-},\textrm{-}))+\lambda\left[\nu_2(\mu_2(\textrm{-},\textrm{-}),\textrm{-})-\nu_2(\textrm{-},\mu_2(\textrm{-},\textrm{-}))-\right.\\ \left.-
P(\mu_3(P(\textrm{-}),\textrm{-},\textrm{-})+\mu_3(\textrm{-},P(\textrm{-}),\textrm{-})+\mu_3(\textrm{-},\textrm{-},P(\textrm{-})))\right]-
\lambda^2P(\mu_3(\textrm{-},\textrm{-},\textrm{-})).
\end{multline*}
\end{example}

\subsection{The operad \texorpdfstring{$BV$}{BV} and hypercommutative algebras}\label{BV}

The main goal of this section is to explain how our results can be used to study the operad~$BV$ of Batalin--Vilkovisky algebras. The key result below (Theorem~\ref{BVGravDelta}) is also proved in~\cite{DCV}; our proofs are based on entirely different methods. 

\subsubsection{The operad \texorpdfstring{$BV$}{BV} and its Gr\"obner basis.}

Batalin--Vilkovisky algebras show up in various questions of mathematical physics. In \cite{GTV}, a cofibrant resolution for the corresponding operad was presented. However, that resolution is a little bit more that minimal. In this section, we present a minimal resolution for this operad in the shuffle category. The operad~$BV$, as defined in most sources, is an operad with quadratic--linear relations: the odd Lie bracket can be expressed in terms of the product and the unary operator. However, alternatively one can say that a $BV$-algebra is a dg commutative algebra with a unary degree~$1$ operator $\Delta$ with $\Delta^2=0$ which is a differential operator of order at most~$2$. This definition of a $BV$-algebra  is certainly not new, see, e.~g., \cite{Getzler}. With this presentation, the corresponding operad becomes an operad with homogeneous relations (of degrees~$2$ and~$3$). Our choice of degrees and signs is taken from~\cite{GTV} where it is explained how to translate between this convention and 
other popular definitions of $BV$-algebras. 

We write identities for operations evaluated on elements of degree~$0$, assuming the usual Koszul sign rule for evaluating operations on elements of arbitrary degrees. We want to emphasize that when computing Gr\"obner bases, we are dealing with operations only, and all signs arise from evaluating operations on elements. The representatives of the identities are chosen in such a way that they can be viewed as elements of the free shuffle operad; we use the language of operations, as opposed the language of tree monomials: for each~$i$, the argument~$a_i$ of an operation corresponds to the leaf~$i$ of the corresponding tree monomial. 

\begin{definition}[BV-algebras with homogeneous relations]
A \emph{Batalin-Vilkovisky algebra}, or \emph{$BV$-algebra} for short,  is a differential graded vector space $(A, d_A)$ endowed with
\begin{itemize}
\item[-] a symmetric binary product $\bullet$ of degree~$0$,
\item[-]  a unary operator $\Delta$ of degree~$+1$,
\end{itemize}
such that $(A,d_A,\Delta)$ is a mixed complex, $d_A$ is a derivation with respect to the product, and such that
\begin{itemize}
\item[-] the product $\bullet$ is associative, $(a_1\bullet a_2)\bullet a_3=a_1\bullet(a_2\bullet a_3)$ and $(a_1\bullet a_3)\bullet a_2=a_1\bullet(a_2\bullet a_3)$
\item[-] the operator $\Delta$ satisfies $\Delta^2(a_1)=0$,
\item[-] the operations satisfy the cubic identity 
\begin{multline*}
 \Delta((a_1\bullet a_2)\bullet a_3) =
\Delta(a_1\bullet a_2)\bullet a_3+\Delta(a_1\bullet a_3)\bullet a_2+a_1\bullet\Delta(a_2\bullet a_3)-\\-
(\Delta(a_1)\bullet a_2)\bullet a_3-(a_1\bullet\Delta(a_2))\bullet a_3-(a_1\bullet\Delta(a_3))\bullet a_2,
\end{multline*}
\end{itemize}
\end{definition}


Let us consider the ordering of the free operad where we first compare lexicographically the operations on the paths from the root to leaves, and then the planar permutations of leaves; we assume that $\Delta>\bullet$. 

\begin{proposition}
The above relations together with the degree~$4$ relation
\begin{multline}\label{BV4}
\Delta(a_1\bullet\Delta(a_2\bullet a_3))+\Delta(\Delta(a_1\bullet a_2)\bullet a_3)+\Delta(\Delta(a_1\bullet a_3)\bullet a_2)-\\-
\Delta((\Delta(a_1)\bullet a_2)\bullet a_3)-\Delta((a_1\bullet\Delta(a_2))\bullet a_3)-\Delta((a_1\bullet\Delta(a_3))\bullet a_2)=0
\end{multline}
form a Gr\"obner basis of relations for the operad of BV-algebras.
\end{proposition}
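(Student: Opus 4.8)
The plan is to use the Diamond Lemma for shuffle operads from~\cite{DK}: it is enough to exhibit all S-polynomials coming from overlaps of the leading monomials of the generating relations and to check that each of them reduces to zero modulo the listed system. First I would fix the leading monomials for the stated path-lexicographic order ($\Delta>\bullet$). The two associativity relations have left-comb leading monomials, and together they make every three-leaf left-comb divisor reducible, so the $\bullet$-subalgebra is the shuffle operad $\mathrm{Com}$ of commutative associative algebras with its quadratic Gr\"obner basis; it is therefore harmless (and convenient) to first normalise the cubic relation and~\eqref{BV4} modulo the associativity relations (this does not change the ideal), after which the cubic relation has leading monomial $\Delta(a_1\bullet(a_2\bullet a_3))$. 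The relation $\Delta^2(a_1)=0$ has leading monomial $\Delta(\Delta(a_1))$, and~\eqref{BV4} has as its leading monomial one of its summands, namely $\Delta(\Delta(a_1\bullet a_2)\bullet a_3)$.

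Next I would organise the overlaps in increasing order of difficulty. (a) Overlaps of the two associativity relations among themselves are the critical pairs for the Gr\"obner basis of $\mathrm{Com}$ and reduce to zero; decorations by $\Delta$ above or below a $\bullet$-pattern do not interfere, so this step may be quoted. (b) The overlap of $\Delta^2=0$ with itself, $\Delta(\Delta(\Delta(a_1)))$, reduces to zero at once. (c) The essential overlap is between $\Delta^2=0$ and the cubic relation, at the common multiple $\Delta(\Delta((a_1\bullet a_2)\bullet a_3))$: reducing the two topmost vertices via $\Delta^2=0$ gives $0$, whereas reducing the lower vertices by the cubic relation and then applying the remaining outer $\Delta$ gives $\Delta$ of the right-hand side of the cubic relation, that is, up to sign, the left-hand side of~\eqref{BV4}; so this S-polynomial \emph{is} the relation~\eqref{BV4}. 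Since~\eqref{BV4} is nothing but $\Delta$ composed with the cubic relation read modulo $\Delta^2=0$, it lies in the defining ideal, so adjoining it does not change the operad. (d) It then remains to check that every other overlap reduces to zero: the cubic relation against the associativity relations, and~\eqref{BV4} against each of the five relations (including itself, e.g.\ the overlap with $\Delta^2=0$ at $\Delta(\Delta(\Delta(a_1\bullet a_2)\bullet a_3))$). Each is a finite, combinatorially explicit list of overlaps, and each S-polynomial is killed by a (sometimes long) chain of rewrites; the point is that no new leading monomial ever appears, so the Buchberger procedure terminates with exactly the relations listed.

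The main obstacle is step~(d): making the overlap analysis manifestly finite and then carrying out the reductions, the heaviest bookkeeping being the interaction of the ``perturbation'' tails of the cubic relation and of~\eqref{BV4} with the associativity relations. A useful way to tame this is to work relative to $\mathrm{Com}$: modulo the associativity relations every monomial becomes a $\Delta$-decorated right comb, so one may reduce any S-polynomial modulo $\mathrm{Com}$ first and is then left with an identity among $\Delta$-decorated right combs, which one checks is a consequence of $\Delta^2=0$ and~\eqref{BV4} alone --- exactly the kind of computation that produced~\eqref{BV4} in step~(c). This also shortcuts part of the case analysis, since it shows the $\mathrm{Com}$-reduced system needs no further completion.
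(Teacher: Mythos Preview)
Your identification of the leading monomials and of the key S-polynomial agrees with the paper: the overlap of $\Delta^2(a_1)$ with the cubic relation (at the common multiple $\Delta^2(a_1\bullet(a_2\bullet a_3))$, not $\Delta(\Delta((a_1\bullet a_2)\bullet a_3))$ as you wrote) produces exactly~\eqref{BV4}, and this is the only new element Buchberger's algorithm generates.

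Where you diverge is in step~(d). You propose to close the argument by reducing every remaining S-polynomial to zero --- overlaps of the cubic relation with associativity, and of~\eqref{BV4} with all five relations --- and you acknowledge this is the main obstacle, suggesting a ``work relative to $\mathrm{Com}$'' strategy without carrying it out. As written this is a plan rather than a proof: the reductions are doable but genuinely tedious, and you have not shown that no new leading monomial appears.

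The paper bypasses step~(d) entirely with a dimension argument. Since $\dim BV(n)=2^n n!$ is known independently (from Getzler), it suffices to count the tree monomials in arity~$n$ not divisible by any of the five leading terms and check that this count equals $2^n n!$: normal forms always span the quotient, so matching the dimension forces them to be a basis, which is equivalent to the Gr\"obner basis condition. Moreover, because every S-polynomial of the listed relations has arity at most~$4$, any failure of confluence would already be visible in arity~$\le 4$; hence the dimension check for $n\le 4$ (a finite computation, done by hand or with~\cite{DVJ}) suffices. This replaces all of your step~(d) by a small counting exercise. Your Diamond-Lemma route is correct in principle and is what one would do without knowing $\dim BV(n)$ in advance, but here the dimension shortcut is decisively cleaner.
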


\begin{proof}
With respect to our ordering, the leading monomials of our original relations are $(a_1\bullet a_2)\bullet a_3$, $(a_1\bullet a_3)\bullet a_2$, $\Delta^2(a_1)$, and $\Delta(a_1\bullet(a_2\bullet a_3))$. The only small common multiple of $\Delta^2(a_1)$ and $\Delta(a_1\bullet(a_2\bullet a_3))$ gives a nontrivial S-polynomial which, is precisely the relation~\eqref{BV4}. The leading term of that relation is $\Delta(\Delta(a_1\bullet a_2) \bullet a_3)$. 

It is well known that $\dim BV(n)=2^nn!$ \cite{Getzler}, so to verify that our relations form a Gr\"obner basis, it is sufficient to show that the restrictions imposed by these leading monomials are enough. In other words, we may check that the number of arity~$n$ tree monomials that are not divisible by any of these is equal to $2^nn!$. Moreover it is sufficient to check that for $n\le 4$, since all S-polynomials of our relations will be elements of arity at most~$4$. This can be easily checked by hand, or by a computer program~\cite{DVJ}.
\end{proof}

\subsubsection{Quillen homology of the operad~\texorpdfstring{$BV$}{BV}}

Let us denote by~$\calG$ the Gr\"obner basis from the previous section.

\begin{proposition}
For the monomial replacement of $BV$, the resolution~$\calA_\calG$ from Section~\ref{oper_monom} is minimal, that is the differential induced on the space of generators is zero. 
\end{proposition}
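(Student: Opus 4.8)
The plan is to argue exactly as was done above for the operads $RB$ and $ncRB$: the leading monomials of $\calG$ can overlap inside a tree monomial only in a rigid way, so that removing any one marked divisor from a generator of $\calA_\calG$ produces a decomposable element, whence the differential induced on $(\calA_\calG)^{ab}$ vanishes.

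First I would record the five leading monomials supplied by the preceding proposition, namely $m_1=(a_1\bullet a_2)\bullet a_3$, $m_2=(a_1\bullet a_3)\bullet a_2$, $m_3=\Delta(\Delta(a_1))$, $m_4=\Delta(a_1\bullet(a_2\bullet a_3))$ and $m_5=\Delta(\Delta(a_1\bullet a_2)\bullet a_3)$, and classify their internal edges by \emph{type}, where the type of an internal edge is the ordered pair (operation at the upper vertex, operation at the lower vertex) together with, when the upper vertex carries $\bullet$, the information of whether the lower vertex is its left or its right input. A direct inspection gives: $m_1$ and $m_2$ each have a single internal edge, of type ``$\bullet$ over $\bullet$, left''; $m_3$ has a single internal edge of type ``$\Delta$ over $\Delta$''; $m_4$ has one edge of type ``$\Delta$ over $\bullet$'' and one of type ``$\bullet$ over $\bullet$, right''; and $m_5$ has two edges of type ``$\Delta$ over $\bullet$'' and one of type ``$\bullet$ over $\Delta$, left''. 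The point to extract is that every relation has a \emph{signature type}, i.e. an internal-edge type occurring in no other relation: ``$\bullet$ over $\bullet$, left'' distinguishes $\{m_1,m_2\}$, ``$\Delta$ over $\Delta$'' distinguishes $m_3$, ``$\bullet$ over $\bullet$, right'' distinguishes $m_4$, and ``$\bullet$ over $\Delta$, left'' distinguishes $m_5$ (the remaining type ``$\Delta$ over $\bullet$'' is shared by $m_4$ and $m_5$, which is harmless).

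Next I would verify the rigidity statement: given a tree monomial $T$ and an internal edge $e$ of $T$ of one of the signature types, there is at most one occurrence inside $T$ of a relation containing $e$ — and, in the ``$\bullet$ over $\bullet$, left'' case, at most one occurrence among $m_1$ and $m_2$, the choice being forced by the smallest descendants of the three leaves adjacent to that subtree. Indeed, once $e$ is fixed the remaining vertices of such a subtree are forced: e.g. for $m_4$ one must adjoin the (necessarily $\Delta$-labelled) parent of the upper vertex of $e$, for $m_5$ the $\Delta$-parent of the upper vertex and the $\bullet$-child of the lower vertex of $e$, and so on; standardisation then determines the leaf labels. Consequently, if $T\otimes S_{i_1}\wedge\cdots\wedge S_{i_q}$ is an indecomposable element of $\calA_\calG$ (so the divisors $S_{i_j}$ are copies of the $m_i$ and together cover every internal edge of $T$), then each $S_{i_j}$ is indispensable in this covering: it is the only divisor containing its signature edge, so $\{S_{i_k}:k\neq j\}$ no longer covers $T$.

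This indispensability gives minimality immediately: for every $j$ the monomial $\partial_{i_j}(T\otimes S_{i_1}\wedge\cdots\wedge S_{i_q})=\pm\,T\otimes S_{i_1}\wedge\cdots\widehat{S_{i_j}}\cdots\wedge S_{i_q}$ has an internal edge of $T$ not internal to any of its remaining marked divisors, hence is decomposable in $\calA_\calG$; the degree-one generators $T\otimes S$ are handled trivially, since then $T$ is one of the $m_i$, so $T$ has an internal edge and $d(T\otimes S)=\pm\,T\otimes 1$ is again decomposable. Thus $d$ maps every generator of $\calA_\calG$ into the span of decomposables, i.e. the differential induced on $(\calA_\calG)^{ab}$ is zero, which is the assertion. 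I expect the only genuine work to be the finite but slightly fiddly case analysis behind the signature-type claim and its uniqueness refinement — in particular keeping careful track of the shuffle-tree labelling conventions in order to see that the ``$\bullet$ over $\bullet$, left'' edge already pins down a single occurrence among $m_1$ and $m_2$.
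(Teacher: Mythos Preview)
Your proposal is correct and the underlying observation is the same as the paper's: in any indecomposable covering of a tree monomial by the five leading terms, every marked divisor is the unique one covering some internal edge, so removing it makes the element decomposable and the induced differential on $(\calA_\calG)^{ab}$ vanishes.

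The packaging, however, differs. The paper proceeds by explicitly enumerating all tree monomials $T$ that admit an indecomposable covering (powers $\Delta^k$, left combs, and the two families of ``$\Delta$ over combs of left combs''), and then observes that for each such $T$ the indecomposable covering is unique; uniqueness immediately gives the vanishing of the induced differential. Your argument avoids this enumeration entirely by assigning to every relation a \emph{signature edge type} occurring in no other relation and showing that, given an edge of signature type inside~$T$, the occurrence of the corresponding relation is determined. This is neater as a proof of the proposition itself and makes the case analysis short and uniform. On the other hand, the paper's explicit list of generators is not wasted effort: it is precisely what is needed in the next theorem, where one computes the Quillen homology of $BV$ by tracking how the deformed differential pairs these generators. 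So if you adopt your signature-edge argument, be aware that you will still have to produce that enumeration separately to proceed.
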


\begin{proof}
Let us describe explicitly the space of generators, that is possible indecomposable coverings of monomials by leading terms of relations (all monomials below are chosen from the basis of the free shuffle operad, so the correct ordering of subtrees is assumed).
These are
\begin{itemize}
 \item[-] all monomials $\Delta^{k}(a_1)$, $k\ge2$ (covered by several copies of $\Delta^2(a_1)$),
 \item[-] all ``left combs''
\begin{equation}\label{Lie}
\lambda=(\ldots((a_1\bullet a_{k_2})\bullet a_{k_3})\bullet \cdots )\bullet a_{k_n}
\end{equation}
where $(k_2,\ldots,k_n)$ is a permutation of numbers $2$, \ldots, $n$, $n\ge3$ (only the leading terms $(a_1\bullet a_2)\bullet a_3$ and $(a_1\bullet a_3)\bullet a_2$ are used in the covering),
 \item[-] all the monomials
 \[
\Delta^k(\Delta(\lambda_1\bullet(\lambda_2\bullet a_j)))
 \]
where $k\ge1$, each~$\lambda_i$ is a left comb as described above (so that several copies of $\Delta^2$, the leading term of degree~$3$, and several leading terms of the associativity relations are used in the covering), 
 \item[-] all monomials
\begin{equation}
\Delta^k(\Delta(\ldots\Delta(\Delta(\lambda_1\bullet \lambda_{2})\bullet \lambda_{3})\bullet \cdots )\bullet \lambda_{n}) 
\end{equation}
where $k\ge 0$, $n\ge 3$, and $\lambda_i$ are left combs (several copies of all leading terms are used, including at least one copy of the degree~$4$ leading term).
\end{itemize}
This is a complete list of tree monomials $T$ for which $(\calA_\calG)^{ab}_T$ is nonzero in positive homological degrees. It is easy to see that for each of them there exists only one indecomposable covering by relations, that is only one generator of $\calA_\calG$ of shape $T$. Consequently, the differential maps such a generator to a combination of decomposable elements, so the differential induced on generators is identically zero.
\end{proof}

The resolution of the operad~$BV$ which one can derive by our methods from this one is quite small (in particular, smaller than the one of~\cite{GTV}) but still not minimal. However, we now have enough information to compute Quillen homology of the operad~$BV$.

\begin{theorem}\label{BVinf}
The basis of $H^Q(BV)$ is formed by monomials 
 \[
\Delta^k(a_1), \quad k\ge 1,
 \]
and all monomials of the form
 \[
\underbrace{\Delta(\ldots\Delta(\Delta(}_{n-1\textrm{ times }}\lambda_1\bullet \lambda_{2})\bullet \cdots )\bullet (\lambda_{n}\bullet a_j)), \quad n\ge 1
 \]
from the resolution of the monomial replacement of $BV$ discussed above. Here all $\lambda_i$ are left combs.
\end{theorem}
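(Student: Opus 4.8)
The plan is to run the deformation machinery of Theorem~\ref{deformed_diff} on the minimal resolution $\calA_\calG$ of the monomial replacement (just established) and then read off $H^Q(BV)$ from the differential it induces on indecomposables. By Theorem~\ref{deformed_diff} there is a deformed differential $D$ on $\calA_\calG$ with $H(\calA_\calG,D)\simeq BV$, so $H^Q(BV)$ is the homology of the differential $\bar D$ induced by $D$ on $(\calA_\calG)^{ab}$, whose underlying graded vector space is spanned by the four families of monomials described above, together with the arity-one and arity-two generators $\Delta=\Delta^1(a_1)$ and $\bullet$ in homological degree~$0$. Since $\calA_\calG$ is minimal for the monomial replacement, the ``leading-term'' part of $\bar D$ is zero; hence for every generator $x$ the element $\bar D(x)$ is a combination of generators whose underlying tree monomial is \emph{strictly smaller} than that of $x$, and $\bar D$ is assembled, through the reconstruction in the proof of Theorem~\ref{deformed_diff}, entirely out of the lower-order terms of the elements of $\calG$.

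The first reduction is a bigrading argument. Every element of $\calG$ is homogeneous both in the number of occurrences of $\Delta$ and in the number of occurrences of $\bullet$: this is clear for the associativity relations and for $\Delta^2(a_1)$, every monomial of the cubic identity carries exactly one $\Delta$ and two $\bullet$'s, and every monomial of~\eqref{BV4} carries exactly two $\Delta$'s and two $\bullet$'s. Hence $D$ and $\bar D$ preserve this bigrading. On the strip with no $\bullet$ only $\Delta^2(a_1)$ is involved, and it has no lower-order terms, so $\bar D$ vanishes there and the generators $\Delta^k(a_1)$ survive for every $k\ge1$; on the strip with no $\Delta$ only the associativity relations are involved, so again $\bar D=0$ and every left comb survives, and together with $\bullet$ these are exactly the $n=1$ members of the second family in the statement, namely the monomials $\lambda_1\bullet a_j$ with $\lambda_1$ an arbitrary (possibly trivial) left comb.

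On the remaining strips (at least one $\Delta$ and at least one $\bullet$) the surviving generators come from families~$3$ and~$4$ and from the degree-one generator produced by the cubic identity. The plan here is to produce an acyclic matching on the distinguished basis of $(\calA_\calG)^{ab}$ with respect to $\bar D$, in the spirit of Theorem~\ref{representatives}: acyclicity is automatic once the matching is compatible with the partial order by underlying tree monomial, because $\bar D$ strictly decreases that monomial, so no directed cycle can occur. The matching is built from the two genuinely non-monomial moves available: the cubic identity turns a $\Delta$ sitting directly above a product into a sum of lower terms, which has the effect that relation~\eqref{BV4} -- itself nothing but ``$\Delta$ applied to the cubic relation'' -- lets a family-$3$ generator carrying an extra $\Delta$ on top pair off against the corresponding family-$4$ generator, and there is no analogous move for a tower of $\Delta$'s that is not above a product (explaining why $\Delta^k(a_1)$ survives) nor when there is no $\Delta$ above a product. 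Matching up all the non-critical generators this way, the critical ones that remain are precisely the pure towers $\Delta^k(a_1)$ and the monomials of the second family of the statement with $n\ge2$ (the latter being, in family-$4$ terms, exactly the $k=0$ generators whose bottom-right leaf is a nontrivial comb), which is the asserted basis.

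The step I expect to be the main obstacle is making the previous paragraph precise: one must pin down the combinatorics of the matching on all bigraded strips and, more seriously, verify that $\bar D$ really realizes it -- i.e., that the lower-order terms of the cubic identity (notably $a_1\bullet\Delta(a_2\bullet a_3)$, which ``pushes $\Delta$ inward'') and of~\eqref{BV4} produce the claimed matching edges with nonzero coefficient, with no higher-order correction cancelling them. As in the $RB$ and $ncRB$ computations above, the point that makes this tractable is that every monomial in these two relations has strictly positive $\Delta$-degree, so the reconstructed differential keeps the relevant terms visible; once this bookkeeping is done, the bigrading reduction, the two pure strips, and the acyclicity of the matching are all formal.
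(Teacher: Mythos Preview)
Your strategy is essentially the paper's: run Theorem~\ref{deformed_diff}, look at the induced differential on indecomposables, and exhibit a pairing of the non-surviving generators. The paper, however, does not frame this as Morse theory. Instead it writes down one explicit formula: for $k\ge1$,
\[
D\bigl(\Delta^k(\Delta(\ldots\Delta(\Delta(\lambda_1\bullet\lambda_2)\bullet\lambda_3)\bullet\cdots)\bullet(\lambda_n\bullet a_j))\bigr)
=\Delta^{k-1}\bigl((\Delta(\ldots)\bullet\lambda_n)\bullet a_j\bigr)+\text{lower terms},
\]
which is exactly your ``strip off an outer $\Delta$ and split off the last leaf'' move. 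This single formula realises the pairing concretely, so the paper bypasses the bookkeeping you flag as the main obstacle.

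There is one genuine gap in your outline. Acyclicity of the matching only tells you that the homology of $(\calA_\calG)^{ab}$ equals the homology of the Morse complex on critical cells; it does \emph{not} say the Morse differential on those cells is zero. You assert that the critical cells are the announced basis but never argue that $\bar D$ (or rather $d^M$) vanishes on them. The paper closes this with a one-line degree count: every surviving monomial of arity $m$ is built from exactly $m-2$ overlapping leading terms, hence has homological degree $m-2$, so there is no room for a nonzero differential among the survivors. Your bigrading by $(\#\Delta,\#\bullet)$ actually sets this up perfectly---on each strip the survivors land in a single homological degree---but you should say so explicitly rather than stopping at acyclicity.
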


\begin{proof}
First of all, let us notice that since $\Omega(\mathbf{B}(BV))$, a free operad generated by $\mathbf{B}(BV)[-1]$, provides a resolution for $BV$, the space $H^Q(BV)[-1]$ is the space of generators of the minimal free resolution, and we shall study the resolution provided by our methods.

It is easy to check that the element $\Delta(\Delta(a_1\bullet a_2) \bullet a_3)$ that corresponds to the leading term of the only contributing S-polynomial will be killed by the differential of the element $\Delta^2(a_1\bullet(a_2\bullet a_3))$ (covered by corresponging overlapping leading terms $\Delta^2(a_1)$ and $\Delta(a_1\bullet(a_2\bullet a_3))$)  in the deformed resolution. This observation goes much further, namely
we have for $k\ge1$
\begin{multline*}
D(\Delta^k(\Delta(\ldots\Delta(\Delta(\lambda_1\bullet \lambda_{2})\bullet \lambda_{3})\bullet \cdots )\bullet (\lambda_n\bullet a_{j}))=\\ 
=\Delta^{k-1}((\Delta(\ldots\Delta(\Delta(\lambda_1\bullet \lambda_{2})\bullet \lambda_{3})\bullet \cdots )\bullet \lambda_{n})\bullet a_{j})+\text{lower terms}
\end{multline*}
in the sense of the partial ordering we discussed earlier). So, if we retain only leading terms of the differential, the resulting homology classes are represented by all the monomials of arity~$m$
 \[
\Delta(\ldots\Delta(\Delta(\lambda_1\bullet \lambda_{2})\bullet \cdots )\bullet \lambda_{n}) 
 \]
with $\lambda_n$ having at least two leaves. They all have the same homological degree~$m-2$ in the resolution (that is, formed by overlapping $m-2$ leading terms), and so there are no further cancellations. 
\end{proof}

So far we have not been able to describe a minimal resolution of the operad~$BV$ by relatively compact closed formulas, even though in principle our proof, once processed by a version of Brown's machinery~\cite{Brown,Cohen}, would clearly yield such a resolution (in the shuffle category). 

\subsubsection{The gravity operad and the Quillen homology of \texorpdfstring{$BV$}{BV}}
The gravity operad $\Grav$ and its Koszul dual $\Hycom$ were originally defined in terms of moduli spaces of curves of genus~$0$ with marked points $\calM_{0,n+1}$ \cite{Getzler1,GK}. However, we are interested in the algebraic aspects of the story, and we use the following descriptions of the gravity operad as a quadratic algebraic operad~\cite{Getzler1}. 

An algebra over the operad~$\Grav$ is a chain complex with graded antisymmetric products 
 \[[x_1,\dots,x_n]\colon A^{\otimes n}\to A\]
of degree $2-n$, which satisfy the
relations: 
\begin{multline}\label{relgrav}
\sum_{1\le i<j\le k}
\pm [[a_i,a_j],a_1,\dots,\widehat{a_i},\dots,\widehat{a_j},\dots,a_k,
b_1,\dots,b_\ell]=\\
=\begin{cases} [[a_1,\dots,a_k],b_1,\dots,b_l] , & l>0 , \\
0 , & l=0, \end{cases} 
\end{multline}
for all $k>2$, $l\ge0$, and $a_1,\dots,a_k,b_1,\dots,b_l\in A$. For example, setting $k=3$ and $l=0$, we obtain the Jacobi relation for~$[a,b]$.

Let us define an admissible ordering of the free operad whose quotient is $\Grav$ as follows. We introduce an additional weight grading, putting the weight of the corolla corresponding to the binary bracket equal to~$0$, all other weights of corollas equal to~$1$, and extending it to compositions by additivity of weight. To compare two monomials, we first compare their weights, then the root corollas, and then path sequences~\cite{DK} according to the reverse path-lexicographic order. For both of the latter steps, we need an ordering of corollas; we assume that corollas of larger arity are smaller. Then for the relation $(k,l)$ in \eqref{relgrav} (written in the shuffle notation with variables in the proper order), its leading monomial is equal to the monomial in the right hand side for $l>0$, and to the monomial $[a_1,\ldots,a_{n-2},[a_{n-1},a_n]]$ for $l=0$.

The following theorem, together with the PBW criterion, implies that the operads $\Grav$ and $\Hycom$ are Koszul, the fact first proved by Getzler~\cite{Getzler}.

\begin{theorem}\label{Grav}
For our ordering, the relations of $\Grav$ form a Gr\"obner basis of relations. 
\end{theorem}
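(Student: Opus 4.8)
The plan is to verify Buchberger's criterion for shuffle operads (the Diamond Lemma of~\cite{DK}) in its dimension-counting form: pin down the leading monomials of \emph{all} the relations under the stated order, describe the tree monomials avoiding every one of them, count those, and compare with the known value of $\dim\Grav(n)$.

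First I would identify the leading monomials. In the free \emph{shuffle} operad the relations of $\Grav$ are the shuffle expansions of~\eqref{relgrav} together with all reorderings of their arguments. For $l>0$, the leading monomial of such an instance is the monomial on the right-hand side of~\eqref{relgrav}: an arity-$(l+1)$ corolla carrying an arity-$k$ corolla ($k\ge3$) on one of its inputs; as $k,l$ and the reordering vary, these are exactly the shuffle monomials in which some corolla has an input that is a corolla of arity $\ge3$. For $l=0$ the relation is antisymmetric in its $k$ arguments (a generalised Jacobi relation), so it contributes a single shuffle relation, whose leading monomial $[a_1,\ldots,a_{k-2},[a_{k-1},a_k]]$ is, as a divisor pattern, a corolla whose last input is a binary corolla. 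Hence a tree monomial is a normal form exactly when (i)~every corolla of arity $\ge3$ occurs at the root, and (ii)~the last input of every corolla is a leaf.

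Next I would enumerate the normal forms. Condition~(i) forces every non-root corolla to be binary, and then~(ii) forces the right input of every binary corolla to be a leaf; so the normal forms are precisely the ``left combs'' built only from the binary bracket, of which there are $(n-1)!$ in arity~$n$, together with the monomials obtained from one arity-$r$ corolla with $r\ge3$ by grafting a (possibly trivial) left comb of binary brackets onto each of its first $r-1$ inputs and a leaf onto its last input. Summing the resulting count over $r$, over the sizes of the grafted combs, and over the shuffling surjections distributing the leaves, one finds that the number of normal forms in arity~$n$ equals $\frac{n!}{2}$. Since $\dim\Grav(n)=\frac{n!}{2}$ (Getzler's computation~\cite{Getzler1}: the total dimension of $H^\bullet(\calM_{0,n+1})$), and the normal forms always span $\Grav(n)$, they must be a basis, so by the Diamond Lemma the relations form a Gr\"obner basis.

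The main obstacle will be the first step: one must check, for \emph{every} reordering of the arguments of~\eqref{relgrav}, that the leading monomial is the claimed one — the subtlety being that under reordering the inner corolla on the right-hand side may occupy any input slot of the outer corolla and its leaves may interleave arbitrarily with the remaining leaves, so one has to confirm that the order (weight, then root corolla, then reverse path-lexicographic comparison of path sequences) still singles out that monomial and that the left-hand side never produces a larger one. A self-contained alternative, avoiding the appeal to the known $\dim\Grav(n)$, is to run the $S$-polynomial reductions directly: all leading monomials are quadratic, so every $S$-polynomial is supported on a configuration of three corollas, and one checks, uniformly in the arities that occur, that each reduces to zero; this is more laborious but imports nothing from outside. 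Either way, once one has a \emph{quadratic} Gr\"obner basis, Koszulness of $\Grav$ and of $\Hycom$ is immediate from Theorem~\ref{newPBW}.
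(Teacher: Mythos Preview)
Your proposal is correct and follows essentially the same route as the paper: identify the leading monomials, describe the normal forms as a root corolla whose last input is a leaf and whose remaining inputs carry binary left combs, count them, and compare with Getzler's dimension formula to conclude via the Diamond Lemma. The one substantive difference is that the paper carries out the enumeration keeping track of the internal grading, obtaining the graded character $(2+t^{-1})(3+t^{-1})\cdots(n-1+t^{-1})$ of the normal forms and matching it against Getzler's graded formula (this refined count is reused later in the paper to compare $H^Q(BV)$ with $\Grav^*$), whereas you only match the ungraded total $n!/2$; either version suffices for the Gr\"obner basis statement itself.
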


\begin{proof}
The tree monomials that are not divisible by leading terms of relations are precisely
 \[
[\lambda_1,\lambda_2,\ldots,\lambda_{n-1},a_j],  
 \]
where all $\lambda_i$, $1\le i\le(n-1)$ are left combs as in~\eqref{Lie} (but made from brackets, not products).
\begin{lemma}\label{grav}
The graded character of the space of such elements of arity~$n$ is 
 \[
(2+t^{-1})(3+t^{-1})\cdots(n-1+t^{-1}).  
 \]
\end{lemma}
\begin{proof}
To compute the number of basis elements where the top degree corolla is of arity $k+1$ (or, equivalently, degree~$1-k$), $k\ge1$, let us notice that this number is equal to the number of basis elements
 \[
[\lambda_1,\lambda_2,\ldots,\lambda_k] 
 \]
where the arity of $\lambda_k$ is at least~$2$ (a simple bijection: join $\lambda_{n-1}$ and $a_j$ into $[\lambda_{n-1},a_j]$). The latter number is equal to
 \[
\sum_{\substack{m_1+\cdots+m_k=n, \\m_i\ge1, m_k\ge2}}\frac{(m_1-1)!(m_2-1)!\cdots(m_k-1)!m_1m_2\cdots m_k}{(m_1+m_2+\cdots+m_k)(m_2+\ldots+m_k)\cdots m_k}\binom{m_1+\cdots+m_k}{m_1,m_2,\ldots,m_k}  
 \]
where each factor $(m_i-1)!$ counts the number of left combs of arity~$m_i$, and the remaining factor is known~\cite{DVJ} to be equal to the number of shuffle permutations of the type $(m_1,\ldots,m_k)$. This can be rewritten in the form 
 \[
\sum_{m_1+\cdots+m_k=n, m_i\ge1, m_k\ge2}\frac{(m_1+\cdots+m_k-1)!}{(m_2+\cdots+m_k)(m_3+\cdots+m_k)\cdots m_k}
 \]
and if we introduce new variables $p_i=m_i+\cdots+m_k$, it takes the form
 \[
\sum_{2\le p_{k-1}<\cdots<p_1\le n-1}\frac{(n-1)!}{p_2\cdots p_k},
 \]
which clearly is the coefficient of $t^{1-k}$ in the product
\begin{multline*}
(n-1)!\left(1+\frac{1}{2t}\right)\left(1+\frac{1}{3t}\right)\cdots\left(1+\frac{1}{(n-1)t}\right)=\\=\left(2+t^{-1}\right)\left(3+t^{-1}\right)\cdots\left(n-1+t^{-1}\right). 
\end{multline*}
\end{proof}
Since the graded character of $\Grav$ is given by the same formula~\cite{Getzler1}, we indeed see that the leading terms of defining relations give an upper bound on dimensions of homogeneous components of $\Grav$ that coincides with the actual dimensions, so there is no room for further Gr\"obner basis elements. 
\end{proof}

Using the basis of $\Grav$ we just constructed, we are able to prove the following result (which gets a conceptual explanation in the next section):

\begin{theorem}\label{BVGravDelta}
On the level of collections of graded vector spaces, we have
\begin{equation}\label{BVinfty}
s H^Q(BV)\simeq\Grav^*\otimes\End_{\k s^{-1}}\oplus \delta\k[\delta], 
\end{equation}
where $\Grav^*$ is the co-operad dual to $\Grav$, $\End_{\k s^{-1}}$ is the endomorphism operad of the graded vector space $\k s^{-1}$, and $\delta\k[\delta]$ is a cofree coalgebra generated by an element $\delta$ of degree~$2$.
\end{theorem}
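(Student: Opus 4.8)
The plan is to prove~\eqref{BVinfty} as an isomorphism of collections of graded vector spaces by comparing Poincar\'e series arity by arity, using the two explicit bases already in hand: the basis of $H^Q(BV)$ produced by Theorem~\ref{BVinf} on the left, and the monomial basis of $\Grav$ from the proof of Theorem~\ref{Grav} together with its graded character from Lemma~\ref{grav} on the right. No co-operad structure needs to be matched, so it is enough to check equality of graded dimensions in each arity.

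First I would dispose of the unary summand. The elements $\Delta^k(a_1)$, $k\ge1$, are precisely the arity-one basis elements of $H^Q(BV)$, and $\Grav$ --- hence $\Grav^*\otimes\End_{\k s^{-1}}$ --- is zero in arity one, so it suffices to identify $s\langle\Delta^k(a_1):k\ge1\rangle$, as a graded vector space, with the cofree coalgebra $\delta\k[\delta]$ on a single generator $\delta$ of degree~$2$. This is a one-line degree count: passing from $\Delta^k(a_1)$ to $\Delta^{k+1}(a_1)$ adjoins one corolla of internal degree $+1$ and one more copy of the relation $\Delta^2$ to the (unique) indecomposable covering, raising the total degree by~$2$, while $s\Delta(a_1)$ already lies in degree~$2$.

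For arities $n\ge2$ I would match $sH^Q(BV)$ with the Hadamard product $\Grav^*\otimes\End_{\k s^{-1}}$. Since $\End_{\k s^{-1}}(n)$ is one-dimensional and concentrated in degree $n-1$, this Hadamard factor multiplies the Poincar\'e series of the arity-$n$ component by $t^{\,n-1}$, while passing to the dual co-operad replaces the grading variable $t$ by $t^{-1}$; invoking Lemma~\ref{grav}, the claim in arity $n$ becomes the generating-function identity
\[
P_{sH^Q(BV)(n)}(t)=t^{\,n-1}\prod_{i=2}^{n-1}\bigl(i+t\bigr).
\]
The natural way to prove this is to exhibit the bijection behind it. An arity-$n$ basis monomial of the second family of Theorem~\ref{BVinf} is determined by a tuple of left combs $(\lambda_1,\dots,\lambda_m)$ built from the product $\bullet$ as in~\eqref{Lie} together with the extra leaf $a_j$; to it I would associate the $\Grav$-monomial obtained by replacing each product-left-comb by the bracket-left-comb on the same leaves and the outer ``$\Delta$-decorated left comb of $\bullet$'s'' by a single gravity bracket of the appropriate arity. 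Conversely, every monomial of the basis appearing in the proof of Theorem~\ref{Grav} arises this way, and the correspondence is well defined because a left comb built from a binary operation carries no information beyond the permutation of its non-initial leaves, regardless of which binary operation is used. It then remains to verify that under this bijection the internal degree (the number of copies of $\Delta$) plus the syzygy degree (the number of relations of $\calG$ entering the covering) plus the shift from $s$ matches the homological degree of the corresponding gravity monomial, read inside $\Grav^*$ and shifted up by $n-1$ by the operadic suspension $\End_{\k s^{-1}}$; regrouping the two characters by the arities of the innermost pieces, exactly as in the proof of Lemma~\ref{grav}, then makes the identity transparent.

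The main obstacle is this final degree bookkeeping. One has to keep track of three separate contributions on the $BV$ side --- the internal degree of the copies of $\Delta$, the syzygy degree arising from how many and which relations of the Gr\"obner basis enter the indecomposable covering of a tree monomial (controlled by the deformed differential $D$ of Theorem~\ref{deformed_diff}), and the suspension shift --- and show that, after reversing the grading, their sum reproduces the homological degree of a gravity monomial (where the bracket $[x_1,\dots,x_k]$ carries the negative degree $2-k$) augmented by the operadic suspension. Lining up the sign reversal with the arity-dependent shift, and checking the outcome against the explicit low-arity data, is the delicate part; the combinatorial bijection itself, and the reduction to a generating-function identity, are routine once the left-comb descriptions on both sides have been written out.
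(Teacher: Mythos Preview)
Your proposal is correct and follows essentially the same route as the paper: both arguments set up the explicit bijection sending the Theorem~\ref{BVinf} basis element $\Delta(\ldots\Delta(\lambda_1\bullet\lambda_2)\bullet\cdots\bullet(\lambda_k\bullet a_j))$ to the dual of the gravity monomial $[\lambda_1,\ldots,\lambda_k,a_j]$ and then verify degrees. The paper skips your generating-function detour and goes straight to the bijection; and the degree bookkeeping you flag as the ``main obstacle'' is in fact a short direct computation in the paper---if $\lambda_1,\ldots,\lambda_k$ have arities $n_1,\ldots,n_k$, the $BV$-side total degree is $(k-1)+(k-2+1+(n_1-1)+\cdots+(n_k-1))+1=n+k-2$, matching $(k-1)+(n-1)$ on the gravity side---so there is no delicate sign-reversal issue once you write it out.
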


\begin{proof}
As above, instead of looking at the bar complex, we shall study the basis of the space of generators of the minimal resolution obtained in Theorem~\ref{BVinf}. In arity~$1$, the element $\delta^k$ (of degree $2k$) corresponds to $s \Delta^k(a_1)$ (of degree $k+(k-1)+1=2k$, the first summand coming from the fact that $\Delta$ is of degree~$1$, the second from the fact that $\Delta^k$ is an overlap of $k-1$ relations, and the last one is the degree shift given by~$s$). The case of elements of internal degree~$0$ (which in both cases are left combs) is also obvious; a left comb of arity $n$ in the space of generators of the free resolution is of homological degree~$n-2+1=n-1$, the second summand coming from the degree shift given by~$s$, and this matches the degree shift given by~$\End_{\k s^{-1}}(n)$. For elements of internal degree $k-1$, let us extract from a typical monomial 
 \[
T=\underbrace{\Delta(\ldots\Delta(\Delta(}_{k-1\textrm{ times }}\lambda_1\bullet \lambda_{2})\bullet \cdots )\bullet (\lambda_{k}\bullet a_j)), 
 \]
of this degree and of arity $n$ the left combs $\lambda_1, \lambda_2, \ldots, \lambda_{k-1}, \lambda_{k}, a_j$, and assign to~$T$ the element of $\Grav^*\otimes\End_{\k s^{-1}}$ corresponding, via the degree shift, to the element dual to the monomial~$[\lambda_1,\lambda_2,\ldots,\lambda_{k-1},\lambda_k,a_j]\in\Grav$. This establishes a degree-preserving bijection, because if arities of $\lambda_1,\ldots,\lambda_k$ are $n_1,\ldots,n_k$, the total (internal plus homological) degree of the former element is \[(k-1)+(k-2+1+(n_1-1)+\cdots+(n_k-1))+1=n+k-2\] (where we add up the $\Delta$ degree, the overlap degree, and the degree shift), and the total degree of the latter one is $(k-1)+(n-1)=n+k-2$.
\end{proof}

\subsubsection{Relationsip to Frobenius manifold construction of Barannikov and Kontsevich}

We conclude with a brief discussion on how our results match those of Barannikov and Kontsevich (\cite{BK}, see also~\cite{LS,Manin}) who proved in a rather indirect way that for a dg $BV$-algebra that satisfies the ``$\partial-\overline{\partial}$-lemma'', there exists a $\Hycom$-algebra structure on its cohomology. Their result hints that our isomorphism~\eqref{BVinfty} exists not just on the level of graded vector spaces, but rather has some deep operadic structure behind it. For precise statements and more details we refer the reader to~\cite{DCV,KSM}; the point we are trying to make here is that some known results from the symmetric category can in fact simplify some computations in the shuffle world too, predicting the shape of tree monomials in the differential of the minimal model.

From Theorem~\ref{Grav}, it follows that the operads $\Grav$ and $\Hycom$ are Koszul, so the cobar construction $\Omega(\Grav^*\otimes\End_{\k[1]})$ is a minimal model for $\Hycom$. We shall now show that the differential of $BV_\infty$ on generators coming from $\Grav^*$ deforms the differential of~$\Hycom_\infty$ in a certain sense. Let $D$ and $d$ denote the differentials of $BV_\infty$ and $\Hycom_\infty$ respectively. We can decompose $D=D_2+D_3+\cdots$ according to the homotopy co-operad structure it provides on the space of generators (note that $d=d_2$ since the operad $\Hycom$ is Koszul). Also, let $m^*$ denote the obvious coalgebra structure on $\delta\k[\delta]$. We shall call a tree monomial in $BV_\infty$ \emph{mixed}, if it contains both corollas from $\Grav^*\otimes\End_{\k[1]}$ and from $(\delta\k[\delta])$. Then $D_2=d_2+m^*$, while for $k\ge3$ the co-operation $D_k$ is zero on the generators $\delta\k[\delta]$, and maps generators from $\Grav^*$ into linear combinations of mixed tree monomials. 
Indeed, $\Hycom$-algebras are closely related to formal Frobenius manifolds, and the result of Barannikov and Kontsevich \cite{BK} essentially implies that there exists a mapping from $\Hycom$ to the homotopy quotient $BV/\Delta$. In fact, it is an isomorphism, which can be proved in several different ways, both using Gr\"obner bases and geometrically; see \cite{Markarian} for a short geometric argument proving that. This means that the following maps exist (the vertical arrows are quasiisomorphisms between the operads and their minimal models):
 \[
\xymatrix{
BV_{\infty} \ar@{->>}[d] \ar@{->>}[dr]_{\pi} & & \Hycom_\infty \ar@{->>}[d]  \\ 
BV    &BV/\Delta   & \Hycom \ar@{->}[l]_{\widetilde{\hphantom{aaaaa}}}
}
 \]
Lifting $\pi\colon BV_\infty\to BV/\Delta\simeq\Hycom$ to the minimal model $\Hycom_\infty$ of $\Hycom$, we obtain the commutative diagram
 \[
\xymatrix{
BV_{\infty} \ar@{->>}[rr]^{\psi} \ar@{->>}[d] \ar@{->>}[dr]_{\pi}& & \Hycom_\infty \ar@{->>}[d]  \\ 
BV    &BV/\Delta  & \Hycom\ar@{->}[l]_{\widetilde{\hphantom{aaaaa}}}  
}
 \]
so there exists a map of dg operads (and not just graded vector spaces, as it follows from our previous computations) between $BV_\infty$ and $\Hycom_\infty$. Commutativity of our diagram together with simple degree considerations yields what we need.

\bibliographystyle{amsalpha}
\providecommand{\bysame}{\leavevmode\hbox to3em{\hrulefill}\thinspace}

\Addresses

\end{document}